\documentclass[11pt,a4paper]{article}

\usepackage{fullpage}
\usepackage[T1]{fontenc} 
\usepackage{amsmath,amssymb}
\usepackage{amsopn,amsthm}
\usepackage{subcaption}

\usepackage{graphicx}
\usepackage{color}
\usepackage{psfrag}

\usepackage{tikz}
\usepackage{paralist}

\usepackage{mathrsfs} 
\usepackage{bm}
\usepackage{cases}

\usepackage{algpseudocode} 
\usepackage{algorithm}
\floatstyle{plain} \newfloat{myalgo}{tbhp}{mya}

\usepackage[a]{esvect}
\usepackage{float}

\usepackage[font=small,labelfont=bf]{caption}

\usepackage{comment}

\usepackage{enumitem}

{\begin{myalgo}[#1]
    \centering
    \begin{minipage}{0.9\textwidth}
      \begin{algorithm}[H]}%
      {\end{algorithm}
    \end{minipage}
  \end{myalgo}}

\usepackage[colorlinks=true]{hyperref}
\hypersetup{urlcolor=blue, citecolor=blue, linkcolor=blue}

\newtheorem{theorem}{Theorem}[section]
\newtheorem{proposition}[theorem]{Proposition}
\newtheorem{lemma}[theorem]{Lemma}

\theoremstyle{definition}
\newtheorem{definition}[theorem]{Definition}
\newtheorem{example}[theorem]{Example}

\theoremstyle{remark} \newtheorem{remark}[theorem]{Remark}

\numberwithin{equation}{section}
\numberwithin{figure}{section}
\numberwithin{algorithm}{section}
\numberwithin{table}{section}

\newcommand{\field}[1]{{\mathbb{#1}}}
\newcommand{\C}{\field{C}}
 
\newcommand{\N}{\field{N}}
\newcommand{\R}{\field{R}}
\newcommand{\Z}{\field{Z}}

\newcommand{\Gcal}{\mathcal{G}}
\newcommand{\Hcal}{\mathcal{H}}

\newcommand{\Vcal}{\mathcal{V}}

\newcommand{\bs}{\boldsymbol} 
\newcommand{\bfa}{{\bs a}}

\newcommand{\bfc}{{\bs c}} 
\newcommand{\bfd}{{\bs d}}
\newcommand{\bfe}{{\bs e}}

\newcommand{\bfx}{{\bs x}}
\newcommand{\bfy}{{\bs y}}

\newcommand{\bfzero}{{\bs 0}}

\newcommand{\bfF}{{\bs F}}

\newcommand{\bfI}{{\bs I}}

\newcommand{\bfQ}{{\bs Q}}

\newcommand{\bfW}{{\bs W}}

\newcommand{\xhat}{\widehat{\bfx}}
\newcommand{\yhat}{\widehat{\bfy}}

\newcommand{\HS}{\mathrm{HS}}
\newcommand{\loc}{{\mathrm{loc}}}

\newcommand{\ra}{\rightarrow}

\newcommand{\di}{\partial}

\newcommand{\dr}{\, \dif r}
\newcommand{\ds}{\, \dif s}

\newcommand{\dy}{\, \dif \bfy}

\newcommand{\idir}{\bfd} 

\newcommand{\rmi}{\mathrm{i}} 
\newcommand{\rme}{\mathrm{e}}

\newcommand{\ph}{\,\cdot\,}

\newcommand{\uinfty}{u^\infty}

\newcommand{\Sone}{{S^1}}

\newcommand{\Sd}{{S^1}}
\newcommand{\Rd}{{\R^2}}

\definecolor{ForestGreen}{RGB}{34,139,34}

\newcommand{\ui}{u^i}
\newcommand{\us}{u^s}

\newcommand{\Ltwo}{{L^2}}
\newcommand{\Linfty}{{L^{\infty}}}

\newcommand{\LtwoSd}{{L^2(\Sd)}}

\DeclareMathAlphabet{\mathbi}{\encodingdefault}{\rmdefault}{\bfdefault}{\itdefault}

\DeclareMathOperator{\dif}{d\!}  

\DeclareMathOperator{\spann}{span}
\DeclareMathOperator{\diag}{diag}

\begin{document}

\title{Direct reconstruction for acoustic inverse Born scattering}  
\author{Nuutti Hyvönen and Lisa Sch\"atzle\footnote{Department of Mathematics and System Analysis, Aalto University, 00076 Helsinki, Finland ({\tt nuutti.hyvonen@aalto.fi, lisa.schatzle@aalto.fi}).}
}
\date{\today}

\maketitle

\begin{abstract}
We consider the inverse medium scattering problem for the Helmholtz equation in two dimensions, i.e., the task to recover a compactly supported penetrable two-dimensional scatterer
from full knowledge of the associated far field data or, equivalently, the far field operator.
Although this problem is uniquely solvable, it is severely ill-posed 
and nonlinear.
In the regime of weak scattering, the Born approximation yields a linearized relation between the contrast and the far field data, thus overcoming the second difficulty. This linear setting allows to build on recent work on linearized electrical impedance tomography, which relies on triangular Zernike decompositions, to derive an explicit reconstruction formula that expresses the expansion coefficients of the contrast in terms of those of the far field data.
By choosing the expansion functions appropriately, the resulting system matrix decouples into separate (infinite) triangular systems for the spatial angular frequencies in the contrast.
Consequently, each of these systems can be solved independently by performing forward substitutions.
Our numerical experiments indicate that this approach, combined with an adequate regularization method, remains effective even when applied to full nonlinear far field data beyond the Born regime.
\end{abstract}

{\small\noindent
  Mathematics subject classifications (MSC2010):
  35R30,  
  78A46, 
  47A52 
  \\\noindent 
  Keywords: Inverse medium scattering, Helmholtz equation,
  far field data, Born approximation, direct reconstruction, QR factorization
  \\\noindent
  Short title: Inverse Born scattering
}

\section{Introduction}
\label{sec:Introduction}

In inverse scattering, one seeks for an unknown material parameter, described by a contrast function, from associated scattered near or far field data, with applications in,~e.g.,~seismology, medical imaging, radar technology, geophysical exploration and nondestructive testing.
Comprehensive surveys of this active research area are provided in~\cite{CakColHad23,ColKre19}.
In this article, we focus on the inverse medium scattering problem for time-harmonic acoustic waves, a setting governed by the Helmholtz equation. We assume to observe far field data for plane wave incident fields along all possible illumination directions at a fixed frequency and present in two dimensions a direct, fast reconstruction algorithm in the framework of the Born approximation, following the ideas in~\cite{AutGarHirHvy24,Garde25} for electrical impedance tomography (EIT).

It is well-known that full knowledge of the far field data uniquely determines the contrast.
However, inverse scattering is ill-posed as small perturbations in the observed data may result in huge errors in the reconstructed contrast.
Moreover, the data depend nonlinearly on the contrast due to multiple scattering effects.

There are two classes of reconstruction algorithms for this problem, either aiming to reconstruct (1) the entire contrast function or (2) the shape or boundary of its compact support.
Methods focusing on solving problem (2) are referred to as qualitative methods.
Prominent examples include the Linear Sampling Method, the Factorization Method, and monotonicity-based techniques (see, e.g., \cite{AudHad14,ColKir96,
GriHar18,Kir98,KirGri08}).
Quantitative methods addressing problem (1) typically either solve a computationally expensive regularized optimization problem or seek to invert a linearized model in the weak scattering regime, with the hope that the resulting inversion procedure remains effective even when the measured data contain nonlinear 
effects (see, e.g., \cite{ColMon88,KilMosSch12,
ZhoAudMenZha26}).
We particularly emphasize the recent low-rank method \cite{ZhoAudMenZha26} in the latter category as it is conceptually close to the approach proposed in this paper.

We assume that the contrast is supported in an {\em a priori} known $\bfc$-centered ball $B_R(\bfc)$ of radius~$R$.
By interpreting the available far field data in the standard way as the kernel of the far field operator on $\Ltwo(\Sone)$, 
the linearized forward map can be presented as a linear operator from
$\Ltwo(B_R(\bfc))$ to the space of Hilbert--Schmidt operators $\HS(\Ltwo(\Sone))$, which allows its parametrization as an infinite matrix after introducing bases for these separable Hilbert spaces.
In particular, expanding the angular dependence of the unknown contrast and the observed Born far field data in Fourier bases reveals a decoupling of the system in the angular direction: a certain angular frequency in the contrast only affects a corresponding diagonal in the Born far field operator expanded  with respect to a (modulated) Fourier basis.
This observation motivates the application of concepts in numerical linear algebra to solve each of the decoupled infinite linear systems that connect the radial behavior of the contrast for a certain angular frequency to a diagonal in the data matrix. Our proposed algorithm essentially applies a QR factorization in an offline stage to the angular subsystems, allowing one to only solve (small) triangular systems when the data become available.


Another key component is the intrinsic low-rank structure of the measured far field data caused by the super-exponential decay of Bessel functions with respect to their order, providing a natural justification for truncating the infinite-dimensional systems while essentially maintaining the retrievable information. This is particularly important for our algorithm since the QR factorization step applies a Gram--Schmidt orthogonalization in an appropriately weighted $L^2$ topology on a finite interval to certain products of Bessel functions, a process that is guaranteed to become unstable if run too long due to the aforementioned super-exponential decay.
The sparsity of far field data has been previously investigated for related inverse problems in~\cite{GriSch24,GriSch25}, and foundational results on this research direction can be found in~\cite{GriHanSyl14,GriSyl16,GriSyl17two}.

The low-rank structure of the inverse medium scattering problem has also been analyzed from a different perspective in \cite{Men23,ZhoAudMenZha26}.
In these works, the far field data are expanded in terms of eigenfunctions of a restricted Fourier integral operator, known as the generalized prolate spheroidal wave functions.

The remainder of this article structures as follows.
In Section~\ref{sec:InhomMediumScattering},
we provide theoretical background on acoustic inhomogeneous medium scattering as well as on its linearization given by the Born approximation.
Section~\ref{sec:MatrixReprForwardProblem} then derives a representation for the linearized system, decoupling it into triangular systems along the radial direction over the angular frequencies in the contrast.
Finally, in Section~\ref{sec:Regularization}, the essential support of the expansion coefficients of the far field data is utilized to justify a truncation-based regularization for inverting the infinite-dimensional triangular systems.
At this stage, we also discuss numerical instability in the construction of the radial basis functions.
Numerical results are presented in Section \ref{sec:NumericalExamples}, where we test our method using (noisy) linearized Born and nonlinear full far field data and benchmark it against the low-rank method \cite{ZhoAudMenZha26} and the MATLAB's built-in nonuniform fast Fourier transform (NUFFT). We close the article with some conclusions.

\section{Inhomogeneous medium scattering}
\label{sec:InhomMediumScattering}

We consider scattering of time-harmonic acoustic waves by a compactly supported penetrable object lying in a homogeneous background medium.
Let $\kappa>0$ denote the wave number and assume the incident field to be a plane wave
\begin{subequations}
\label{eq:scatteringProblem}
\begin{equation}
	\ui(\bfx,\idir) \,:=\, \rme^{\rmi\kappa\bfx\cdot\idir}\,, \qquad \bfx\in\R^2\,,
\end{equation}
that propagates along an illumination direction $\idir\in\Sone:=\{\bfx\in\R^2\,:\,|\bfx|=1\}$.
In the following, all dependencies on $\idir$ are marked by a second argument.
The incident field $\ui(\ph,\idir)$ hits and interacts with a compactly supported penetrable scatterer $\Omega\subset \R^2$, modeled by the exterior support of a real-valued contrast function $q\in\Linfty(\R^2)$ satisfying $q>-1$ a.e.~on $\Omega$ and $q=0$ a.e.~on $\R^2\setminus\overline\Omega$.
This interaction generates a total field $u(\ph,\idir)\in H^1_{\loc}(\R^2)$ that solves
\begin{equation}
\label{eq:HelmholtzEquation}
	\Delta u(\ph,\idir) + \kappa^2(1+q)u(\ph,\idir) \,=\, 0 \qquad \text{in } \R^2\,,
\end{equation}
such that the scattered field $\us(\ph,\idir):=u(\ph,\idir)-\ui(\ph,\idir)$ satisfies the Sommerfeld radiation condition
\begin{equation}
    \label{eq:Sommerfeld}
    \lim_{r\to\infty} \sqrt{r} \Bigl( \frac{\di \us}{\di r}(\bfx,\idir)
    - \rmi\kappa \us(\bfx,\idir) \Bigr) \,=\, 0 \,, \qquad r=|\bfx|\rightarrow\infty \,,
\end{equation}
\end{subequations}
uniformly with respect to the direction $\xhat = \bfx/|\bfx|\in\Sone$.

It is known that the unique weak solution $u(\ph,\idir)\in H^1_{\loc}(\Rd)$ of~\eqref{eq:scatteringProblem} (see,~e.g.,~\cite[Thm.~7.13]{Kir21}) satisfies the Lippmann--Schwinger integral equation
\begin{equation}
  \label{eq:LippmannSchwinger}
  u(\ph,\idir) 
  \,=\, \ui(\ph,\idir)
  + \kappa^2 \int_\Omega q(\bfy)u(\bfy,\idir) \Phi(\ph-\bfy)  \dy
  \,=:\, \ui(\ph,\idir)
  + L_q u(\ph,\idir)
  \qquad \text{in } \Omega \,,
\end{equation}
with $L_q:\Ltwo(\Omega)\rightarrow\Ltwo(\Omega)$ and $\Phi(\bfx):=\rmi/4\, H^{(1)}_0(\kappa|\bfx|)$, $\bfx\not=\bfzero$, denoting
the fundamental solution to the Helmholtz equation in free space at wave number $\kappa$ (see,~e.g.,~\cite[Thm.~7.12]{Kir21}). 
The function $H^{(1)}_0$ is the Hankel function of the first kind and order zero. Due to the asymptotic behavior of $H^{(1)}_0$ for a large argument, the scattered field $u^s(\ph,\idir)$ fulfills the asymptotic far field expansion
\begin{equation}
\label{eq:far_field_expansion}
  \us(\bfx,\idir) \,=\, \frac{\rme^{\rmi\pi/4}}{\sqrt{8\pi}}
  \frac{\rme^{\rmi \kappa r}}{\sqrt{\kappa r}} 
  \uinfty(\xhat,\idir) + O\bigl(r^{-\frac{3}{2}}\bigr) \,, \qquad 
  r=|\bfx| \to \infty \,,
\end{equation}
uniformly with respect to the observation direction $\xhat=\bfx/|\bfx|\in\Sd$. Here, the far field pattern ${\uinfty \in \Ltwo(\Sd\times\Sd)}$ is given by
\begin{equation}
\label{eq:full_far_field}
  \uinfty(\xhat,\idir) 
  \,=\, \kappa^2 \int_\Omega q(\bfy) u(\bfy,\idir) \rme^{-\rmi \kappa \xhat\cdot \bfy} \dy \,, 
  \qquad \xhat\in\Sd \,,
\end{equation}
(see, e.g., \cite[Thm.~7.15]{Kir21}), and it determines 
the associated far field operator
\begin{equation}
  \label{eq:FarfieldOperator}
  F: \Ltwo(\Sd) \ra \Ltwo(\Sd) \,, \quad
  \bigl(Fg\bigr)(\xhat)
  \,:=\, \int_{\Sd} \uinfty(\xhat,\idir) g(\idir)\ds(\idir)\,,
\end{equation}
which maps superpositions of plane wave incident fields to the far field patterns of the associated scattered fields.
This operator is well-known to be compact, normal and of trace class (see, e.g., \cite{ColKre95b}). In particular, it is Hilbert--Schmidt on $\Ltwo(\Sone)$, i.e., $F \in \HS(\Ltwo(\Sone))$, and it can thus be viewed as an infinite-dimensional matrix after fixing an orthonormal system for $\Ltwo(\Sone)$.

In this work, we are interested in the inverse medium scattering problem of recovering the contrast $q$ from the knowledge of the associated far field operator $F$, or equivalently, from the knowledge of the far field data $\uinfty(\xhat,\idir)$ for all $\xhat,\idir\in\Sone$.
This broadly studied problem is known to be uniquely solvable (see, e.g.,~\cite[Thm.~7.28]{Kir21}) but severely ill-posed and nonlinear as multiple scattering effects have to be taken into account.

\subsection{Linearization by considering the Born approximation}
\label{subsec:BornAxpproximation}

If $L_q$ from \eqref{eq:LippmannSchwinger} satisfies 
$\|L_q\|_{\mathscr{L}(\Ltwo(\Omega))}\ll1$
the total field $u(\ph,\idir)$ can be accurately approximated by its Born approximation $u_B(\ph,\idir):= (I+L_q)\ui(\ph,\idir)$ for all $\idir\in\Sone$.
This follows by only accounting for the first two terms in the Neumann series of $(I - L_q)^{-1}$ when viewing the Lippmann--Schwinger equation \eqref{eq:LippmannSchwinger} as a fixed-point equation for $u(\ph,\idir)$.
The field $u_B(\ph,\idir)$ solves a source problem for the Helmholtz equation,
\begin{equation*}
	\Delta u_B(\ph,\idir) + \kappa^2u_B(\ph,\idir) \,=\, -\kappa^2 q\ui(\ph,\idir)\qquad \text{in } \R^2\,,
\end{equation*}
with the associated scattered field $u^s_B(\ph,\idir):=u_B(\ph,\idir)-\ui(\ph,\idir)$ fulfilling the Sommerfeld radiation condition \eqref{eq:Sommerfeld}.
The Born far field pattern $u_B^\infty(\ph,\idir)$ is defined by replacing the scattered field $u^s$ by its Born approximation $u^s_B$ in the asymptotic expansion \eqref{eq:far_field_expansion}, which leads to the representation
\begin{equation}
\label{eq:BornFF}
	u_B^\infty(\xhat,\idir) \,:=\, \kappa^2 \int_{\Omega} q(\bfy)\rme^{-\rmi \kappa (\xhat-\idir)\cdot\bfy}\dy\,, \qquad \xhat,\idir\in\Sone\,.
\end{equation}
The associated Born far field operator
\begin{equation}
\label{eq:BornffO}
	F_B:L^2(\Sone) \rightarrow L^2(\Sone)\,, \qquad
	(F_Bg)(\xhat) \,:=\, \int_{\Sone} u_B^\infty(\xhat,\idir) g(\idir) \ds (\idir)\,
\end{equation}
is also a Hilbert--Schmidt operator. We refer to inverse Born scattering as the task to recover the contrast $q$ from the knowledge of the Born far field operator $F_B$.

Formula \eqref{eq:BornFF} indicates that knowing $u_B^\infty(\xhat,\idir)$ for all $\xhat,\idir\in\Sone$ is equivalent to knowing the two-dimensional Fourier transform of $q$ on the origin-centered disk of radius $2 \kappa$
\begin{equation*}
    B_{2\kappa}(\bfzero)\,=\,\{ \kappa(\xhat-\idir)\;:\; \xhat,\idir\in\Sone \} \subset \R^2.
\end{equation*}
 However, sampling $\xhat$ and $\idir$ uniformly on $\Sone$ demonstrates that the natural sampling pattern for the Fourier transform of $q$ over $B_{2\kappa}(\bfzero)$ is non-uniform, as visualized in Figure~\ref{fig:sketchSamplingFourierData}. Since $q$ has compact support, its Fourier transform is an analytic function by virtue of the Paley--Wiener theorem, and the unique continuation principle thus reveals that the contrast $q$ is uniquely determined by the knowledge of the Born far field operator $F_B$. However, it follows from \cite{Kir17} that the eigenvalues of $F_B$ decay at the same rate as those of $F$, implying that inverse Born scattering is ill-posed to the same extend as the original inverse medium scattering problem.

\begin{figure}[t]
  \centering
  \includegraphics[width=.35\textwidth]{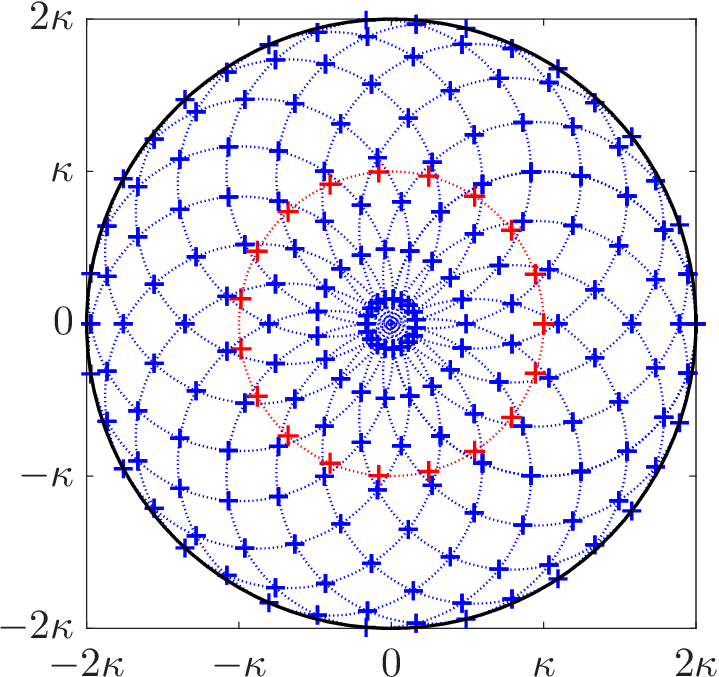}
  \caption{The natural non-uniform sampling pattern (blue crosses) for the Fourier data of the contrast $q$ in the definition \eqref{eq:BornFF} of the Born far field pattern $u_B^\infty$ for $20$ equiangular illumination directions $\idir$ (red crosses) and observation directions $\xhat$.} 
  \label{fig:sketchSamplingFourierData}
\end{figure}

Inspired by the treatment of the linearized continuum model of EIT in~\cite{AutGarHirHvy24,GarHyv24}, we adopt formulas \eqref{eq:BornFF}--\eqref{eq:BornffO} as the starting point for deriving a direct reconstruction formula for inverse Born scattering. Since we plan to treat the contrast $q$ as an element of $L^2(\Omega)$ and $F_B$ as a Hilbert--Schmidt operator on $L^2(\Sone)$, i.e., as an element of $\HS(\Ltwo(\Sone))$, we complete this section by noting that the forward map of Born scattering is indeed bounded between these spaces.

\begin{proposition}
\label{prop:boundedness}
    The linear forward map
    \begin{equation*}
        T_B: q  \, \mapsto \,  F_B
    \end{equation*}
    defined by \eqref{eq:BornFF}--\eqref{eq:BornffO} is bounded from $L^2(\Omega)$ to $\HS(\Ltwo(\Sone))$ with
    \begin{equation*}
    \| T_B \|_{\mathscr{L}(L^2(\Omega),\HS(\Ltwo(\Sone))} \, \leq \, 2 \pi \kappa^2 \sqrt{|\Omega|}\,.
    \end{equation*}
\end{proposition}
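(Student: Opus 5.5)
The plan is to use the identification of the Hilbert--Schmidt norm of an integral operator on $\Ltwo(\Sone)$ with the $\Ltwo(\Sone\times\Sone)$ norm of its kernel. Since $F_B$ is the integral operator with kernel $u_B^\infty$ by \eqref{eq:BornffO}, we have
\begin{equation*}
\|F_B\|_{\HS(\Ltwo(\Sone))}^2 \,=\, \int_{\Sone}\int_{\Sone} |u_B^\infty(\xhat,\idir)|^2 \ds(\xhat)\ds(\idir)\,.
\end{equation*}
So it suffices to bound the kernel pointwise and then integrate over the torus $\Sone\times\Sone$, whose measure is $(2\pi)^2$.

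For the pointwise bound, I would take absolute values in \eqref{eq:BornFF}. The exponential $\rme^{-\rmi\kappa(\xhat-\idir)\cdot\bfy}$ has modulus one because $\xhat$, $\idir$ and $\bfy$ are real. This gives $|u_B^\infty(\xhat,\idir)|\le \kappa^2\int_\Omega|q(\bfy)|\dy$. The Cauchy--Schwarz inequality on $\Omega$, pairing $|q|$ with the constant function $1$, then yields
\begin{equation*}
|u_B^\infty(\xhat,\idir)| \,\le\, \kappa^2 \sqrt{|\Omega|}\,\|q\|_{L^2(\Omega)}
\end{equation*}
uniformly in $\xhat,\idir\in\Sone$. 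The integral is finite because $\Omega$ is bounded, as $q$ has compact support.

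Inserting this bound into the Hilbert--Schmidt identity gives $\|F_B\|_{\HS}^2\le (2\pi)^2\kappa^4|\Omega|\,\|q\|_{L^2(\Omega)}^2$. Taking square roots gives exactly the constant $2\pi\kappa^2\sqrt{|\Omega|}$.

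Linearity of $T_B$ is immediate from \eqref{eq:BornFF}, since the map $q\mapsto u_B^\infty$ is linear. The only step needing care is the Hilbert--Schmidt/kernel identity. This is standard: expand the kernel in an orthonormal basis $\{e_m\otimes \overline{e_n}\}$ of $\Ltwo(\Sone\times\Sone)$ and apply Parseval's identity. I would simply cite it, so I do not expect a real obstacle here.
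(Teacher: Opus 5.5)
Your proposal is correct and follows essentially the same route as the paper: identify the Hilbert--Schmidt norm of $F_B$ with the $L^2(\Sone\times\Sone)$ norm of its kernel, bound the kernel pointwise by $\kappa^2\sqrt{|\Omega|}\,\|q\|_{L^2(\Omega)}$ via Cauchy--Schwarz, and integrate over $\Sone\times\Sone$ (measure $(2\pi)^2$) to get the constant $2\pi\kappa^2\sqrt{|\Omega|}$. The only cosmetic difference is that you state the kernel identity as an equality where the paper writes an inequality, which changes nothing.
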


\begin{proof}
Since
\begin{equation*}
(F_B g)(\xhat) \, = \,  \kappa^2 \int_{\Sone} \int_{\Omega} q(\bfy)\rme^{-\rmi \kappa (\xhat-\idir)\cdot\bfy}\dy\, g(\idir) \ds (\idir)\,, \qquad \xhat \in\Sone\,,
\end{equation*}
the basic theory on Hilbert--Schmidt integral operators yields (see,~e.g.,~\cite{Weidmann1980})
\begin{equation}
\label{eq:forward_kernel}
\| T_B q \|_{\HS(\Ltwo(\Sone))}^2 
\leq 
\kappa^4 \int_{\Sone} \int_{\Sone} \bigg | \int_{\Omega} q(\bfy)\rme^{-\rmi \kappa (\xhat-\idir)\cdot\bfy}\dy \bigg|^2  \ds (\idir)  \ds (\xhat) \, .
\end{equation}
As 
\begin{equation*}
 \bigg | \int_{\Omega} q(\bfy)\rme^{-\rmi \kappa (\xhat-\idir)\cdot\bfy}\dy \bigg|^2 \leq |\Omega| \| q \|_{L^2(\Omega)}^2 \, 
\end{equation*}
by the Cauchy--Schwarz inequality, the assertion follows by integrating twice over $\Sone$ in \eqref{eq:forward_kernel} and taking the square root.
\end{proof}

\section{Angularly decoupled triangular systems for Born scattering}
\label{sec:MatrixReprForwardProblem}

 We start by formulating Born scattering in a matrix form between orthonormal bases of $L^2(B_1(\bfzero))$ and $\HS(\Ltwo(\Sone))$, then consider a specific choice for the radial parts of the basis for $L^2(B_1(\bfzero))$, and finally show that our choices lead to angularly decoupled triangular systems for determining the expansion coefficients of the contrast. In the following, we assume prior knowledge of a discoidal region of interest (ROI) $B_R(\bfc)\subset\R^2$ containing $\Omega$.

\subsection{Matrix representation}
We begin by deriving the expansion coefficients for the observed far field data with respect to a modulated Fourier basis following~\cite{GriSch24}. By linear substitution, we rewrite the Born far field pattern \eqref{eq:BornFF} as
\begin{equation*}
	u_B^\infty(\xhat,\idir) \,=\, \kappa^2 \int_{B_R(\bfzero)} q(\bfy+\bfc)\rme^{-\rmi\kappa\bfc\cdot(\xhat-\idir)}\rme^{-\rmi \kappa (\xhat-\idir)\cdot\bfy}\dy\,, \qquad \xhat,\idir\in\Sone\,.
\end{equation*}
Due to the Jacobi--Anger expansion (see, e.g.,~\cite[Eq.~(3.112)]{ColKre19}), the plane wave term in this formula can be expanded as
\begin{equation*}
\rme^{-\rmi \kappa (\xhat-\idir)\cdot\bfy} \,=\,  2\pi \sum_{m,n\in\Z}  \rmi^{n-m} \rme^{\rmi(n-m)\arg\bfy}J_m(\kappa|\bfy|)J_n(\kappa|\bfy|)\bfe_m(\xhat)\overline{\bfe_{n}(\idir)}\,,
\end{equation*}
where $(\bfe_m)_{m\in\Z}:=(\exp(\rmi m\arg(\ph))/\sqrt{2\pi})_{m\in\Z}$ is the standard Fourier basis of $\LtwoSd$ and $J_m$ denotes the Bessel function of the first kind and order $m$.
By introducing the modulated Fourier system $(\bfe^\bfc_m)_{m\in\Z}:=(\exp(-\rmi\kappa\bfc\cdot(\ph))\bfe_m)_{m\in\Z}$, which also forms an orthonormal basis for $\Ltwo(\Sone)$, we conclude that
\begin{equation*}
\rme^{-\rmi\kappa\bfc\cdot(\xhat-\idir)}\rme^{-\rmi \kappa (\xhat-\idir)\cdot\bfy} \,=\,  2\pi \sum_{m,n\in\Z}  \rmi^{n-m} \rme^{\rmi(n-m)\arg\bfy}J_m(\kappa|\bfy|)J_n(\kappa|\bfy|)\bfe^\bfc_m(\xhat)\overline{\bfe^\bfc_{n}(\idir)}\,.
\end{equation*}
This immediately yields a representation for the Born far field operator corresponding to $q$ in the orthonormal basis $(\langle\ph,\bfe^\bfc_n\rangle_{\Ltwo(\Sone)}\bfe^\bfc_m)_{m,n\in\Z}$ of $\HS(\Ltwo(\Sone))$ with expansion coefficients 
\begin{align}
	a_{m,n} &\,:=\, \langle F_B\bfe^\bfc_n,\bfe^\bfc_m\rangle_{L^2(\Sone)} \notag\\
    &\,=\, 2\pi \kappa^2\rmi^{n-m} \int_{B_R(\bfzero)} q(\bfy+\bfc)\rme^{\rmi(n-m)\arg\bfy}J_m(\kappa|\bfy|)J_n(\kappa|\bfy|)\dy \notag\\
	&\,=\, (2\pi)^{3/2} \kappa^2\rmi^{n-m} \int_{B_R(\bfzero)} q(\bfy+\bfc)\overline{\bfe_{m-n}(\yhat)}J_m(\kappa|\bfy|)J_n(\kappa|\bfy|)\dy \notag\\
    &\,=\, (2\pi)^{3/2} (\kappa R)^2\rmi^{n-m} \int_{B_1(\bfzero)} q(R\bfy+\bfc)\overline{\bfe_{m-n}(\yhat)}J_m(\kappa R|\bfy|)J_n(\kappa R|\bfy|)\dy \,,
    \label{eq:exp_coeff_FF}
\end{align}
where $\yhat = \bfy/|\bfy|\in\Sone$.

We continue by introducing an orthonormal basis $(\Psi_{j,k})_{j\in\Z,k\in\N_0}$ of $L^2(B_1(\bfzero))$ for expanding the shifted and rescaled contrast $q(R(\ph)+\bfc)$ to enable representing the forward map (cf.~Proposition~\ref{prop:boundedness})
\begin{equation}
\label{eq:forward}
L^2(B_1(\bfzero)) \ni q(R(\ph)+\bfc) \mapsto F_B \in \HS(\Ltwo(\Sone))
\end{equation}
between orthonormal bases. Observing that the kernel of the integral transform \eqref{eq:exp_coeff_FF}, mapping  $q(R(\ph)+\bfc)$ to $(a_{m,n})_{m,n \in \Z}$, separates into a radial $|\bfy|$-dependent part and an angular $\yhat$-dependent factor that is given by the standard Fourier basis, it is natural to search for the basis of $L^2(B_1(\bfzero))$ in the form
\begin{equation}
\label{eq:def_Psi}
    \Psi_{j,k}(\bfy):=\bfe_j(\yhat)R_k^{|j|}(|\bfy|),
    \qquad  \bfy\in B_1(\bfzero)\,.
\end{equation}
Here, the radial functions $(R_k^{|j|})_{j\in\Z,k\in\N_0}$ are chosen such that $(\Psi_{j,k})_{j\in\Z,k\in\N_0}$ forms an orthonormal basis for $\Ltwo(B_1(\bfzero))$; due to the orthonormality of the standard Fourier basis on $\Ltwo(\Sone)$, it straightforwardly follows that the necessary and sufficient condition is that $(R_k^{|j|})_{k\in\N_0}$ is an orthonormal basis for the weighted $L^2$-space
\begin{equation}
L^2_r(0,1) = \bigg\{ f: (0,1) \to \C \text{ measurable} \; : \; \int_0^1 |f(r)|^2 r \dr < \infty \bigg\} 
\end{equation}
for each $j \in \Z$. We may thus expand
\begin{equation}
\label{eq:exp_qc}
	q(R(\ph)+\bfc) \,=\, \sum_{j\in\Z} q_j \,=\, \sum_{j\in\Z}\sum_{k=0}^\infty c_{j,k} \Psi_{j,k}
	\qquad \text{for } c_{j,k}\,:=\, \big \langle q(R(\ph)+\bfc), \Psi_{j,k} \big \rangle_{\Ltwo(B_1(\bfzero))}\,,
\end{equation}
where we call $q_j$ the $j$-th angular frequency in $q(R(\ph)+\bfc)$. 
Note that $q_j$, $(a_{m,n})_{m,n\in\Z}$ and $(c_{j,k})_{j\in\Z,k\in\N_0}$ all depend on $\bfc$ and $R$, but we suppress this dependence to improve readability. The specific choice of the radial basis functions $(R_k^{|j|})_{j\in\Z,k\in\N_0}$ will be considered in Subsection~\ref{subsec:RadialBasis} below.

We address the inverse medium scattering problem by recovering the coefficients $(c_{j,k})_{j\in\Z,k\in\N_0}$ of the unknown contrast from the knowledge of the coefficients $(a_{m,n})_{m,n\in\Z}$ of the observed Born far field operator.
Inserting \eqref{eq:exp_qc} into \eqref{eq:exp_coeff_FF} and exploiting the orthonormality of the Fourier basis reveals the underlying infinite-dimensional system matrix $(b_{m,n}^{j,k})_{m,n \in \Z, j\in\Z,k\in\N_0}$, that characterizes the Born forward map \eqref{eq:forward} with respect to the chosen orthonormal basis:
\begin{equation}
\label{eq:LS}
    a_{m,n}
    \,=\, \sum_{j\in\Z}\sum_{k=0}^\infty b_{m,n}^{j,k} c_{j,k}\,,
    \qquad m,n\in\Z\,,
\end{equation}
for
\begin{equation}
\label{eq:matrix_coeff}
 b^{j,k}_{m,n} 
 \,:=\, \begin{cases}
 (2\pi)^{3/2}(\kappa R)^2(-\rmi)^{j} \big\langle R^{|j|}_{k}, J_m(\kappa R\ph)J_{m-j}(\kappa R\ph) \big\rangle_{L^2_r(0,1)} & \text{if } n=m-j\,, \\[1mm]
 0 & \text{else}.
 \end{cases}
\end{equation}
Having a closer look at \eqref{eq:LS}--\eqref{eq:matrix_coeff}, we make the following central structural observations.
\begin{remark}
\label{rem:propertiesLS}
    \begin{enumerate}
        \item[(i)] As in \cite{AutGarHirHvy24,GarHyv24} for the case of EIT, \eqref{eq:matrix_coeff} uncovers a decoupling of angular frequencies, which enables considering the diagonals $(a_{m,m-j})_{m\in\Z}$ of the data matrix separately for $j\in\Z$.
    Indeed, all available information on the $j$-th angular frequency $q_j$ in $q(R(\ph)+\bfc)$ is included in the $(-j)$-th diagonal of the data matrix, which allows to solve for the associated expansion coefficients $( c_{j,k} )_{k \in \N_0}$ from
        \begin{equation}
        \label{eq:LSj}
            a_{m,m-j}
            \,=\, \sum_{k=0}^\infty b_{m,m-j}^{j,k} c_{j,k}\,,
            \qquad m\in\Z\, ,
        \end{equation}
        for each $j \in \Z$.
        \item[(ii)]  For each $j\in\Z$, a half of the equations in \eqref{eq:LSj} are redundant due to a symmetry in  $m \in \Z$, which means that only a half of the equations needs to be considered and the remaining data can be used for noise filtering.
        Indeed, the reciprocity relation in the Born far field data
        \begin{equation*}
            u^\infty_B(\xhat,\idir) \,=\, u^\infty_B(-\idir,-\xhat)
            \qquad \text{for all } \xhat,\idir\in\Sone\,,
        \end{equation*}
        yields~\cite[Eq.~(2.28)]{GriSch24}
        \begin{equation}
        \label{eq:a_symmetry}
           a_{-(m-j),-m} \,=\,  (-1)^j\, a_{m,m-j}  \qquad \text{for all } m, j \in\Z\,.
        \end{equation}
        The same can alternatively be deduced from (see,~e.g.,~\cite[Eq.~(10.4.1)]{NIST:DLMF})
        \begin{equation*}
            J_{-m}J_{-(m-j)} \,=\, (-1)^jJ_mJ_{m-j}
            \qquad \text{for all } m, j \in\Z,
        \end{equation*}
       which also gives  
        \begin{equation*}
            b^{j,k}_{-(m-j),-m} \,=\, (-1)^j\,b^{j,k}_{m,m-j} \qquad \text{for all }m, j \in\Z \text{ and }  k \in \N_0,
        \end{equation*}
        by virtue of \eqref{eq:matrix_coeff}.
        Consequently, the $(-j)$-th diagonal of the data matrix $(a_{m,n})_{m,n\in\Z}$ is symmetric up to the factor $(-1)^j$ with respect to the (possibly virtual) element $a_{j/2,-j/2}$, and the analogous conclusion also holds for the system matrix $(b_{m,n}^{j,k})_{m,n \in \Z}$ with fixed $j$ and $k$.
        In consequence, we do not discard any unique equations in \eqref{eq:LSj} if we only consider 
        \begin{equation}
        \label{eq:LSj_reciprocity}
            a_{m,m-j}
            \,=\, \sum_{k=0}^\infty b_{m,m-j}^{j,k} c_{j,k}\,,
            \qquad  m\geq \tfrac j 2\,,
        \end{equation}
         where $a_{m,m-j}$ could be replaced by the averaged data 
         \begin{equation}
         \label{eq:noise_filt}
         \widetilde a_{m,m-j}:=(a_{m,m-j}+(-1)^ja_{-m+j,-m})/2\,,
         \end{equation}
         without altering the equations, to improve the signal-to-noise ratio.
    \hfill$\lozenge$
    \end{enumerate}
\end{remark}

It remains to construct orthonormal bases $(R_k^{|j|})_{k\in\N_0}$, $j \in \Z$, for $L^2_r(0,1)$ so that  inverting~\eqref{eq:LS}, or equivalently~\eqref{eq:LSj_reciprocity}, becomes  straightforward.
Following \cite{AutGarHirHvy24, GarHyv24}, we aim for a choice that makes the system matrix in~\eqref{eq:LSj_reciprocity} lower triangular for every $j \in \Z$. The radial Zernike bases employed in \cite{AutGarHirHvy24, GarHyv24} are unsuitable for this purpose, but it turns out they can be replaced by bases generated through a Gram--Schmidt orthonormalization process of the products of Bessel functions appearing in \eqref{eq:matrix_coeff}.

\subsection{Choice of the radial bases}
\label{subsec:RadialBasis}

For $j,m\in\Z$, we define 
\begin{equation}
\label{eq:def_Pjm}
    P_{m}^{j}(r) \,:=\, J_{m}(\kappa R r) J_{m-j}(\kappa R r)
    \,,\qquad r\in(0,1)\,,
\end{equation}
which appear in \eqref{eq:matrix_coeff} and will act as our initial, i.e., non-orthonormalized, radial basis functions. To ease the notation, we set
\begin{equation*}
    \Z_{\geq c} := \{ k \in \Z \; : \;  k \geq c \}
\end{equation*}
for $c \in \R$.

\begin{proposition}
\label{prop:bases}
For each $j \in \Z$, the functions $(P_{m}^{j})_{m \in \Z_{\geq j/2}}$ are linearly independent and their linear span is dense in $L^2_r(0,1)$. 
\end{proposition}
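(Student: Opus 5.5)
The density part reduces, via the Born forward map and Paley--Wiener, to the uniqueness statement already discussed in Subsection~\ref{subsec:BornAxpproximation}. Linear independence will come from the large-argument asymptotics of modified Bessel functions combined with a Vandermonde argument.

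\emph{Density.} Fix $j\in\Z$ and let $f\in L^2_r(0,1)$ be orthogonal to $P_m^j$ for all $m\in\Z_{\geq j/2}$. By the identity $J_{-m}J_{-(m-j)}=(-1)^jJ_mJ_{m-j}$ from Remark~\ref{rem:propertiesLS}(ii), we have $P^j_{j-m}=(-1)^jP^j_m$. Hence $f\perp P_m^j$ for \emph{all} $m\in\Z$. Now consider the complex-valued contrast
\[
g(\bfy):=f(|\bfy|)\,\bfe_j(\yhat)\in L^2(B_1(\bfzero)),
\]
with $\bfc=\bfzero$ and $R=1$. The Born far field construction \eqref{eq:BornFF}, \eqref{eq:exp_coeff_FF} is linear and does not use that $q$ is real.

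By \eqref{eq:exp_coeff_FF} and the orthonormality of the Fourier basis, the coefficients $a_{m,n}$ of $T_Bg$ vanish unless $n=m-j$. In that case they are multiples of $\langle f,P^j_m\rangle_{L^2_r(0,1)}$ (or of its conjugate, depending on the inner-product convention), which is zero. Thus $T_Bg=0$, i.e.\ $u^\infty_B\equiv 0$. This means the Fourier transform of $g$ vanishes on $B_{2\kappa}(\bfzero)$. Since $g$ is compactly supported, its Fourier transform is real-analytic by Paley--Wiener, so it vanishes identically. Hence $g=0$ and therefore $f=0$, which gives density of the span.

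\emph{Linear independence.} Suppose $\sum_{m\in M}\alpha_mP^j_m=0$ on $(0,1)$ for a finite set $M\subset\Z_{\geq j/2}$. Each $P^j_m$ is, up to the factor $(\kappa Rr)^{|j|}$ (via $J_{-n}=(-1)^nJ_n$), an even entire function of $r$. Hence the relation extends to all complex arguments, in particular to $r=\rmi t/(\kappa R)$ with $t>0$. There, up to $m$-dependent unimodular constants (the signs $(-1)^{\cdot}$ from negative orders and the powers of $\rmi$ from $J_\nu(\rmi t)=\rmi^\nu I_\nu(t)$, which can be absorbed into the $\alpha_m$), $P^j_m$ becomes $I_m(t)I_{m-j}(t)$.

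The classical large-argument expansion gives
\[
I_\mu(t)I_\nu(t)\sim\frac{\rme^{2t}}{2\pi t}\sum_{k\ge0}\frac{d_k(\mu,\nu)}{t^k},
\]
where $d_k$ is a polynomial in $4\mu^2$ and $4\nu^2$. For $\mu=m$ and $\nu=m-j$ one has $\mu^2+\nu^2=2s+j^2/2$ with $s:=(m-j/2)^2$, and $s$ is strictly increasing in $m\in\Z_{\geq j/2}$.

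The key claim I would verify is the following: for fixed $j$, $d_k$ is a polynomial in $s$ alone, of exact degree $k$, with nonzero leading coefficient. For $k=1$ this holds, since $d_1=-(4\mu^2+4\nu^2-2)/8$. Also, $\mu^2\nu^2=(s-j^2/4)^2$, so symmetric expressions in $\mu^2,\nu^2$ are polynomials in $s$. The leading behaviour in large $s$ should be a nonzero multiple of $s^k$, which I would check from the explicit product formula for the asymptotic expansion of $I_\mu I_\nu$ (e.g.\ via the expansion of the $\nu$-dependent Hankel asymptotics).

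Granting the claim, comparing coefficients of $\rme^{2t}t^{-1-k}$ for $k=0,\dots,|M|-1$ gives $\sum_m\alpha_m d_k(s_m)=0$. The matrix $\bigl(d_k(s_m)\bigr)$ is a Vandermonde matrix in the distinct nodes $s_m$ multiplied by an invertible triangular change of basis, so all $\alpha_m=0$.

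This degree claim about $d_k$ is the main obstacle. If it proves cumbersome, my fallback is a Müntz-type argument at $r\to0$. The products for $m\geq j$ have distinct leading powers $r^{2m-j}$. The finitely many $m$ with $j/2\le m<j$ all start at $r^{j}$, so they would have to be separated using the explicit Cauchy-product series coefficients of $J_mJ_{m-j}$.
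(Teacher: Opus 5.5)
Your proof is correct. The density half is the paper's argument, while your linear-independence argument takes a genuinely different route, and that route actually repairs a flaw in the paper's proof.

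For density, you do exactly what the paper does. Orthogonality to $P^j_m$, $m\ge j/2$, extends to all $m\in\Z$ via $P^j_{j-m}=(-1)^jP^j_m$; the paper phrases this through \eqref{eq:a_symmetry}. The single-mode contrast $\bfe_j(\yhat)\rho(|\bfy|)$ then has vanishing Born data, and Paley--Wiener injectivity forces $\rho=0$. Your remark that realness of $q$ is never used makes explicit something the paper leaves implicit. One cosmetic point: $P^j_m$ depends on $\kappa R$, so with $\bfc=\bfzero$, $R=1$ you must use the wave number $\kappa R$, or else keep $R$ and set $q(R\bfy)=f(|\bfy|)\bfe_j(\yhat)$.

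For linear independence, the paper asserts that the lowest-order term of $P^j_m$ is $r^{2m-j}$, hence pairwise distinct. This is false whenever $m-j<0$, since then $J_{m-j}=(-1)^{m-j}J_{j-m}$ vanishes to order $j-m$. Consequently, for $j\ge2$ every $m\in\{\lceil j/2\rceil,\dots,j\}$ starts at $r^{j}$; for instance $P^2_1=-J_1^2$ and $P^2_2=J_0J_2$ both begin with $r^2$. The analogous collision occurs for $j\le-2$. So the paper's small-$r$ argument only settles $|j|\le1$, which is precisely the obstruction you spotted in your fallback (note that $m=j$ also belongs to the colliding group). Your continuation to the imaginary axis, followed by the expansion of $I_mI_{m-j}$, treats all $m\ge j/2$ uniformly. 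The price is the use of Poincaré asymptotics, which is legitimate because the exponentially small parts of $I_\nu(t)$ are beyond all orders relative to $\rme^{2t}$. The bonus is that it explains why $m\ge j/2$ is the natural index set: $m$ and $j-m$ produce the same node $s$.

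The degree claim you left open closes quickly. Write $a_l(\nu)=\prod_{i=1}^l\bigl(4\nu^2-(2i-1)^2\bigr)/(l!\,8^l)$; its leading term is $\nu^{2l}/(l!\,2^l)$. Then $d_k=(-1)^k\sum_{l+l'=k}a_l(\mu)a_{l'}(\nu)$ is a polynomial of degree $k$ in $(\mu^2,\nu^2)$ with top homogeneous part $\frac{(-1)^k}{2^kk!}(\mu^2+\nu^2)^k$. Put $x=m-j/2$, so that $\mu=x+j/2$ and $\nu=x-j/2$. The map $x\mapsto-x$ swaps $\mu^2$ and $\nu^2$, so $d_k$ is even in $x$, i.e.\ a polynomial in $s=x^2$ of degree at most $k$. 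Its $s^k$-coefficient is $\frac{(-1)^k}{2^kk!}\cdot2^k=\frac{(-1)^k}{k!}\neq0$. Hence $\bigl(d_k(s_m)\bigr)$ is an invertible lower-triangular matrix times a Vandermonde matrix in the distinct nodes $s_m=(m-j/2)^2$, and your argument is complete.
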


\begin{proof}
    We start by proving that $(P_{m}^{j})_{m \in \Z_{\geq j/2}}$ are linearly independent. For $m \geq j/2$ and any $j \in \Z$, the lowest order  term in the converging origin-centered power series representation of $P_{m}^{j}(r)$ behaves as $r^{2m-j}$ (e.g.,~\cite[Sec.~5.41, Eq.~(1)]{Watson44}). Hence, each function in $(P_{m}^{j})_{m \in \Z_{\geq j/2}}$ has its own distinct polynomial behavior close to the origin, which proves that they are linearly independent. 

    Let $\rho \in L^2_r(0,1)$. We prove the assertion on the density by showing that $\rho$ can be orthogonal in $L^2_r(0,1)$ to all functions in the set $(P_{m}^{j})_{m \in \Z_{\geq j/2}}$ only if it vanishes. Define a shifted and scaled contrast as $q(R \bfy + \bfc) = \bfe_j(\yhat) \rho(|\bfy|)$, $\bfy \in B_1(\bfzero)$, for a fixed but arbitrary $j \in \Z$, which according to Remark~\ref{rem:propertiesLS} means that the data matrix $(a_{m,n})_{m,n\in\Z}$ only has nonzero elements on its $(-j)$-th diagonal. By inserting our choice of $q(R (\ph) + \bfc)$ into \eqref{eq:exp_coeff_FF} and integrating over $\Sone$, we get the representation (cf.~\eqref{eq:matrix_coeff})
    \begin{equation*}
        a_{m,m-j} =  (2\pi)^{3/2}(\kappa R)^2(-\rmi)^{j} \langle \rho,  P_{m}^{j} \rangle_{L^2_r(0,1)}, \qquad m \in \Z,
    \end{equation*}
    for the elements on the $(-j)$-th diagonal. If $\rho$ is orthogonal to all functions in $(P_{m}^{j})_{m \in \Z_{\geq j/2}}$, the $(-j)$-the diagonal of $(a_{m,n})_{m,n\in\Z}$ is thus empty due to the symmetry~\eqref{eq:a_symmetry}, meaning that the data matrix altogether vanishes. This means that our $q(R (\ph) + \bfc)$ is in the nullspace of the forward operator \eqref{eq:forward}, and thus $q(R (\ph) + \bfc)$ is zero almost everywhere by the unique solvability of the considered inverse Born scattering problem. This completes the proof.
\end{proof}

For any fixed $m, j\in\Z$,
\begin{equation}
\label{eq:P_jm}
    P_{m-j}^{-j}
    \,=\,
    J_{m-j}J_{(m-j)-(-j)}
    \,=\,
    J_mJ_{m-j}
    \,=\,
    P_m^{j}\,,
\end{equation}
from which it follows that for any $j \in \Z$,
\begin{equation*}
\label{eq:lin_ependence_Pjm}
(P_{m}^{j} )_{m \in \Z_{\geq j/2}} = (P_{m}^{-j} )_{m \in \Z_{\geq -j/2}} = (P_{m}^{|j|})_{m \in \Z_{\geq |j|/2}},
\end{equation*}
with the functions in these sets given in the same ordering with respect to increasing $m$. Hence, we only need to consider the radial bases $(P_{m}^{|j|})_{m \in \Z_{\geq |j|/2}}$, $j \in \Z$, in what follows.

Now we are ready to properly introduce our basis $(\Psi_{j,k})_{j\in\Z,k\in\N_0}$ for $L^2(B_1(\bfzero))$ by defining the orthonormal basis $(R_k^{|j|})_{k \in \N_0}$  of $L^2_r(0,1)$ in \eqref{eq:def_Psi} for each $j \in \Z$. 

\begin{definition}
The radial basis functions $(R_k^{|j|})_{k \in \N_0}$, $j \in \Z$, in \eqref{eq:def_Psi} are defined by applying the Gram--Schmidt ortogonalization process with respect to the inner product of $L^2_r(0,1)$ to the functions $(P_{m}^{|j|})_{m \in \Z_{\geq |j|/2}}$. That is,
\begin{equation}
\label{eq:def_radial_basis}
  R_k^{|j|} \,:=\, \frac{\widetilde R_k^{|j|}}{\big \|\widetilde R_k^{|j|} \big \|_{L^2_r(0,1)}} 
    \qquad \text{with} \  \widetilde R_k^{|j|}
    \,:=\, P_{k+\lceil|j|/2\rceil}^{|j|} - \sum_{m=0}^{k-1} \big \langle P_{k+\lceil|j|/2\rceil}^{|j|},R_{m}^{|j|} \big \rangle_{L^2_r(0,1)}R_{m}^{|j|}
\end{equation}
for $k \in \N_0$.
\hfill$\lozenge$
\end{definition}

Since by Proposition~\ref{prop:bases} the functions $(P_{m}^{|j|})_{m \in \Z_{\geq |j|/2}}$ are linearly independent and their linear span is dense in $L^2_r(0,1)$, the set $(R_k^{|j|})_{k \in \N_0}$ is a well-defined orthonormal basis of $L^2_r(0,1)$ for each $j \in \Z$, which is precisely the requirement for the construction leading to \eqref{eq:LSj_reciprocity} to be valid. On the negative side, the procedure in \eqref{eq:def_radial_basis} is numerically unstable, which we will address in Section~\ref{sec:Regularization} below.

\subsection{Angularly decoupled triangular systems}
\label{subsubsec:GramSchmidtRadialBasis}

Let us then examine how our specific choice for the radial bases simplifies the system \eqref{eq:LSj_reciprocity}. For $j \in \N_0$, $m \in \Z_{\geq j/2}$ and $k \in \N_0$,  formula \eqref{eq:matrix_coeff} gives
\begin{equation}
\label{eq:reduced_b}
b_{m,m-j}^{j,k} = (2\pi)^{3/2}(\kappa R)^2(-\rmi)^{j} \big\langle R^{|j|}_{k}, P_m^{j} \big\rangle_{L^2_r(0,1)} = (2\pi)^{3/2}(\kappa R)^2(-\rmi)^{j} \big\langle R^{|j|}_{k}, P_m^{|j|} \big\rangle_{L^2_r(0,1)}.
\end{equation}
The right side of \eqref{eq:reduced_b} vanishes if $k > m-\lceil |j|/2\rceil = m-\lceil j/2\rceil$ since $R^{|j|}_{k}$ is orthogonal to the subspace of $L^2_r(0,1)$ spanned by the first $k$ functions in $(P_{m}^{|j|})_{m \in \Z_{\geq |j|/2}}$ due to the Gram--Schmidt process \eqref{eq:def_radial_basis}. On the other hand, for $-j \in \N_0$ and $m \in \Z_{\geq j/2}$, it follows from \eqref{eq:P_jm} that
\begin{align}
b_{m,m-j}^{j,k} &= (2\pi)^{3/2}(\kappa R)^2(-\rmi)^{j} \big\langle R^{|j|}_{k}, P_m^{j} \big\rangle_{L^2_r(0,1)} \nonumber \\[1mm ]
&= (-\rmi)^{2j} (2\pi)^{3/2}(\kappa R)^2 (-\rmi)^{-j} \big\langle R^{|j|}_{k}, P_{m-j}^{-j} \big\rangle_{L^2_r(0,1)} = (-1)^{j} b_{m-j,m}^{-j,k}.
\label{eq:reduced_b_minus}
\end{align}
Since in this case $P_{m-j}^{-j} = P_{m+|j|}^{|j|}$, the orthogonalization process \eqref{eq:def_radial_basis} dictates that $b_{m,m-j}^{j,k} = (-1)^{j} b_{m-j,m}^{-j,k}$ vanishes when 
\begin{equation*}
k >  m + |j| - \lceil |j|/2\rceil =  m + \lfloor |j| /2\rfloor = m - \lceil j/2\rceil. 
\end{equation*}
Together with \eqref{eq:LSj_reciprocity}, these conclusions yield angularly decoupled triangular systems for determining the expansion coefficients $(c_{j,k})_{j \in \Z, k \in \N_0}$ from the knowledge of the data matrix $(a_{m,n})_{m,n\in \Z}$:
\begin{equation}
\label{eq:triangular0}
	a_{m,m-j}
	\,=\, \sum_{k=0}^{m-\lceil j/2\rceil} b_{m,m-j}^{j,k}c_{j,k}, \qquad m \in \Z_{\geq j/2} \, ,
\end{equation}
for $j \in \Z$. 

In order to recast \eqref{eq:triangular0} for a fixed $j\in\Z$ in a matrix-vector form, we introduce the infinite-dimensional vectors 
\begin{equation}
\label{eq:full_vectors}
	\bfc^j \,:=\, \big[c_{j,k-1} \big]_{k=1}^{\infty}
	\qquad\text{and}\qquad
	\bfa^j \,:=\, \big[ a_{(m-1)+\lceil j/2\rceil,(m-1)-\lfloor j/2\rfloor} \big]_{m=1}^{\infty},
\end{equation}
corresponding to the $j$-th angular frequency in $q(R(\ph)+\bfc)$ and a half of the $(-j)$-th diagonal in the data matrix, respectively, as well as the lower triangular infinite-dimensional system matrix $\bfF^j:=[F^j_{m,k}]_{m,k=1}^\infty$ given componentwise as
\begin{equation}
\label{eq:F_comp}
	F^{j}_{m,k} \,=\, \begin{cases}
	(2\pi)^{3/2}(-\rmi)^j(\kappa R)^2 \big \langle R_{k-1}^{|j|}, P_{(m-1)+\lceil |j|/2\rceil}^{|j|} \big\rangle_{L^2_r(0,1)} & \text{if }  k\leq m
    \,,\\[1mm]
	0 & \text{else}.
	\end{cases}
\end{equation}
For each $j\in\Z$, the resulting system
\begin{equation}
\label{eq:infinite_system}
	\bfF^{j} \bfc^j\,=\, \bfa^j
\end{equation}
corresponds to \eqref{eq:triangular0} when $m$ runs from $\lceil j/2\rceil$ to infinity and can be solved through a forward substitution (not accounting for instability).
Note that according to \eqref{eq:reduced_b} and \eqref{eq:reduced_b_minus}, the latter term in the inner product in \eqref{eq:F_comp} should, in fact, be $P_{(m-1)+\lceil j/2\rceil}^{j}$, but employing \eqref{eq:P_jm} for $j < 0$, one deduces
$$
P_{(m-1)+\lceil j/2\rceil}^{j} = P_{(m-1)+\lceil j/2\rceil - j }^{-j} = P_{(m-1) + \lceil |j|/2\rceil}^{|j|},
$$
which allows the presented form.

These considerations lead to the following reconstruction formula that is our main result.

\begin{theorem}
\label{Theorem:main}
Denote by $(\Psi_{j,k})_{j\in\Z,k\in\N_0}$ the orthonormal system for $\Ltwo(B_1(\bfzero))$ defined by \eqref{eq:def_Psi} and \eqref{eq:def_radial_basis}, and let $(a_{m,n})_{m,n\in\Z}$ be the given expansion coefficients of the Born far field operator as in \eqref{eq:exp_coeff_FF}.
Then, the expansion coefficients of the shifted and scaled contrast $q(R(\ph)+\bfc)$ with respect to $(\Psi_{j,k})_{j\in\Z,k\in\N_0}$, as given in \eqref{eq:exp_qc}, can be computed separately for each $j \in \Z$ via a recursion with respect to $k$:
\begin{equation}
\label{eq:forward_substitution}
	c_{j,k} \,=\, \frac 1 {(2\pi)^{3/2}(\kappa R)^2(-\rmi)^j \big\| \widetilde{R}_{k}^{|j|} \big \|_{L^2_r(0,1)}} a_{k+\lceil j/2\rceil, k-\lfloor j/2\rfloor} 
	- \sum_{i=0}^{k-1} \frac{\big \langle R_{i}^{|j|}, P_{k+\lceil |j|/2\rceil}^{|j|} \big \rangle_{L^2_r(0,1)}}{\big\| \widetilde{R}_{k}^{|j|} \big\|_{L^2_r(0,1)} } \, c_{j, i}
\end{equation}
for $k \in \N_0$. Here, $(\widetilde{R}_{k}^{|j|})_{k \in \N_0}$ are the unnormalized orthogonal basis functions from \eqref{eq:def_radial_basis} and the sum in \eqref{eq:forward_substitution} is viewed to be empty for $k=0$.
\end{theorem}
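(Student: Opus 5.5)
The plan is to read off the $m$-th row of the lower triangular system \eqref{eq:infinite_system}, equivalently \eqref{eq:triangular0}, with $m = k+\lceil j/2\rceil$, and solve it for the diagonal unknown $c_{j,k}$. Everything needed is already in place: the system \eqref{eq:triangular0} holds for all $m\in\Z_{\geq j/2}$ by Remark~\ref{rem:propertiesLS}, and the triangular structure was derived from \eqref{eq:reduced_b} and \eqref{eq:reduced_b_minus}.

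First, fix $j\in\Z$ and $k\in\N_0$, and set $m=k+\lceil j/2\rceil$. Then $m-j = k-\lfloor j/2\rfloor$, since $\lceil j/2\rceil - j = -\lfloor j/2\rfloor$. This identifies the data entry as $a_{k+\lceil j/2\rceil,\,k-\lfloor j/2\rfloor}$. By \eqref{eq:triangular0} this row reads
\begin{equation*}
a_{k+\lceil j/2\rceil,\,k-\lfloor j/2\rfloor} = \sum_{i=0}^{k} b^{j,i}_{m,m-j}\, c_{j,i}.
\end{equation*}
By \eqref{eq:F_comp}, which already absorbs the $j<0$ case through \eqref{eq:P_jm}, each coefficient is
\begin{equation*}
b^{j,i}_{m,m-j} = (2\pi)^{3/2}(\kappa R)^2(-\rmi)^j \big\langle R_i^{|j|}, P^{|j|}_{k+\lceil|j|/2\rceil}\big\rangle_{L^2_r(0,1)}.
\end{equation*}
Here I will check that $\lceil j/2\rceil$ and $\lceil |j|/2\rceil$ index the same function, which is exactly the remark following \eqref{eq:infinite_system}.

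The key step is evaluating the diagonal coefficient $i=k$. From \eqref{eq:def_radial_basis} we have
\begin{equation*}
P^{|j|}_{k+\lceil|j|/2\rceil} = \widetilde R_k^{|j|} + \sum_{i<k}\langle P^{|j|}_{k+\lceil|j|/2\rceil}, R_i^{|j|}\rangle R_i^{|j|}.
\end{equation*}
Pairing with $R_k^{|j|}$ and using orthonormality gives $\langle R_k^{|j|}, P^{|j|}_{k+\lceil|j|/2\rceil}\rangle = \|\widetilde R_k^{|j|}\|_{L^2_r(0,1)}$. This norm is strictly positive by the linear independence in Proposition~\ref{prop:bases}. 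I also need the pairing to be real so that conjugation plays no role. Indeed, $P_m^{|j|}$ is real-valued because Bessel functions of integer order are real on the positive axis, so the Gram--Schmidt outputs $R_i^{|j|}$ are real as well.

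Finally, divide the row equation by $(2\pi)^{3/2}(\kappa R)^2(-\rmi)^j\|\widetilde R_k^{|j|}\|_{L^2_r(0,1)}$ and move the terms $i<k$ to the right-hand side. The constant prefactor cancels in those terms, which yields \eqref{eq:forward_substitution}. The recursion is well founded because $c_{j,k}$ depends only on $c_{j,0},\dots,c_{j,k-1}$. When $k=0$ the sum is empty and we get $c_{j,0}$ directly.

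I expect the main obstacle to be only bookkeeping: verifying the index shift $m-j=k-\lfloor j/2\rfloor$ and the equality of $P^{j}$ and $P^{|j|}$ for negative $j$. The analysis itself, including the convergence of \eqref{eq:LS}, was already settled by Proposition~\ref{prop:boundedness} and the orthonormal-basis property.
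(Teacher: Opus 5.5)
Your proposal is correct and follows the paper's proof. You solve row $k+\lceil j/2\rceil$ of the lower triangular system \eqref{eq:triangular0} by forward substitution, and then replace the diagonal entry $\langle R_k^{|j|}, P^{|j|}_{k+\lceil |j|/2\rceil}\rangle_{L^2_r(0,1)}$ by $\|\widetilde R_k^{|j|}\|_{L^2_r(0,1)}$ using the Gram--Schmidt representation and orthonormality, exactly as the paper does (your realness remark is harmless but unnecessary, since $\langle R_k^{|j|},\widetilde R_k^{|j|}\rangle_{L^2_r(0,1)}=\|\widetilde R_k^{|j|}\|_{L^2_r(0,1)}$ holds for complex-valued functions too).
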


\begin{proof}
Fix $j \in \Z$. Solving the lower triangular system \eqref{eq:infinite_system} via forward substitution directly gives
\begin{equation*}
	c_{j,k} \,=\, \frac 1 {(2\pi)^{3/2}(\kappa R)^2(-\rmi)^j \big \langle R_{k}^{|j|}, P_{k+\lceil |j|/2\rceil}^{|j|} \big \rangle_{L^2_r(0,1)}} a_{k+\lceil j/2\rceil, k-\lfloor j/2\rfloor} 
	- \sum_{i=0}^{k-1} \frac{\big \langle R_{i}^{|j|}, P_{k+\lceil |j|/2\rceil}^{|j|} \big \rangle_{L^2_r(0,1)}}{\big \langle R_{k}^{|j|}, P_{k+\lceil |j|/2\rceil}^{|j|}\big \rangle_{L^2_r(0,1)}} c_{j, i} \, 
\end{equation*}
for $k \in \N_0$. Representing $P_{k+\lceil |j|/2\rceil}^{|j|}$ as in \eqref{eq:def_radial_basis} and utilizing the orthogonality of $(R_{k}^{|j|})_{k \in \N_0}$ allows to replace the inner product in the denominator by the norm of $\widetilde{R}_{k}^{|j|}$, which is nonzero by construction. This completes the proof. 
\end{proof}

Observe that all inner products and norms appearing in \eqref{eq:forward_substitution} have already been computed (and stored) when running the Gram--Schmidt process in \eqref{eq:def_radial_basis}. Hence, applying the reconstruction formula \eqref{eq:forward_substitution}  is essentially for free, assuming the Gram--Schmidt process has been run offline prior to having the data in hand. It is also worth noting that our approach to solving the system \eqref{eq:LSj_reciprocity} is essentially an infinite-dimensional QR factorization, with \eqref{eq:forward_substitution} corresponding to solving the triangular system defined by the ``R'' matrix.

%
\begin{example}
\label{exa:analytic_example}
	As an example we consider $q=\chi_{B_r(\bfzero)}$ for some $0<r<1$, with $\chi_{(\, \cdot \, )}$ denoting the characteristic function of a given set.
	From \eqref{eq:exp_coeff_FF} we conclude that the expansion coefficients of the associated Born far field operator are given by
	\begin{equation*}
		a_{m,n} \,=\, \begin{cases}
		2\pi\kappa ^2\|J_m(\kappa|\ph|)\|^2_{L^2(B_r(\bfzero))} \,=\, 2\pi\|J_m(|\ph|)\|^2_{L^2(B_{\kappa r}(\bfzero))} & \text{if } m=n\,, \\
		0 & \text{else},
		\end{cases}
	\end{equation*}
	i.e.,~by an infinite-dimensional diagonal matrix.
	By \cite[Eq.~(10.22.5)]{NIST:DLMF}, we can rewrite
	\begin{equation*}
		a_{m,n} \,=\, \begin{cases}
		2\pi^2(\kappa r)^2 \left( J^2_m(\kappa r) - J_{m-1}(\kappa r)J_{m+1}(\kappa r)\right) & \text{if } m=n\,, \\
		0 & \text{else}.
		\end{cases}
	\end{equation*}
	Thus, for this specific choice of $q$ we have an explicit formula for the data vector in \eqref{eq:full_vectors}, namely
	\begin{equation*}
		\bfa^j \,=\, \begin{cases}
		\big [2\pi^2(\kappa r)^2 \left( J^2_{m-1}(\kappa r) - J_{m-2}(\kappa r)J_{m}(\kappa r)\right) \big]_{m=1}^{\infty} & \text{if } j=0\,, \\[1mm]
		\bfzero & \text{else}.
		\end{cases}
	\end{equation*}
	Consequently, only one infinite-dimensional triangular system for $j=0$ has to be solved.
     \hfill$\lozenge$
\end{example}

\begin{example}
\label{ex:Zernike}
We visualize some elements of the orthonormal basis \eqref{eq:def_Psi} of $\Ltwo(B_1(\bfzero))$ for $\kappa R=2.5$ and $\kappa R=10$, with the radial components computed numerically via the Gram--Schmidt process \eqref{eq:def_radial_basis}. We restrict the angular frequency to $j \in \{0, \dots, 2 N \}$ and the radial index to $k \in \{ 0, \dots, N - \lceil j/2 \rceil \}$ with $N = 3$. According to $\eqref{eq:forward_substitution}$, the associated expansion coefficients of a scaled and shifted contrast, together with those for the corresponding negative frequencies $j \in \{-2 N, \dots, -1 \}$, can be determined from the knowledge of the truncated data matrix $( a_{m,n} )_{m,n=-N}^{N}$ (or, more precisely, from the knowledge of slightly more than a half of it). We use 
a Gauss--Legendre quadrature with $N_r=100$ nodes 
for evaluating the integrals involved in the Gram--Schmidt process \eqref{eq:def_radial_basis}. The resulting $(N+1)^2 = 16$ basis functions  are shown in Figure \ref{fig:Psi} for $\kappa R=2.5$ and in Figure \ref{fig:Psi_kR5} for $\kappa R=10$. The functions in Figure~\ref{fig:Psi} are qualitatively similar to the corresponding Zernike polynomials employed as the spatial basis functions for EIT in \cite{AutGarHirHvy24,GarHyv24}, but the ones in Figure \ref{fig:Psi_kR5} systematically have more oscillations in the radial direction and more finer details close to the center of the unit disk compared to the Zernike polynomials.
To the best of our knowledge, no families of functions obtained via the orthonormalization of such products of Bessel functions have previously been documented in the literature.
\begin{figure}[t]
    \centering
    \begin{subfigure}[b]{.2\textwidth}
      \centering
      \includegraphics[width=\textwidth]{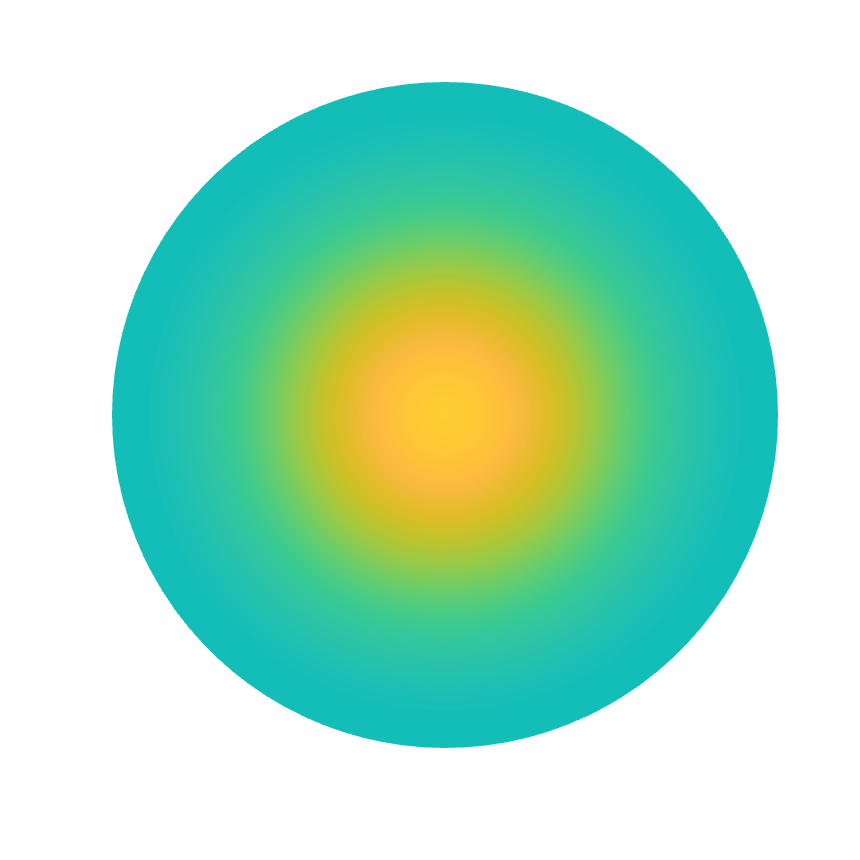}
    \end{subfigure}
    \begin{subfigure}[b]{.2\textwidth}
      \centering
      \includegraphics[width=\textwidth]{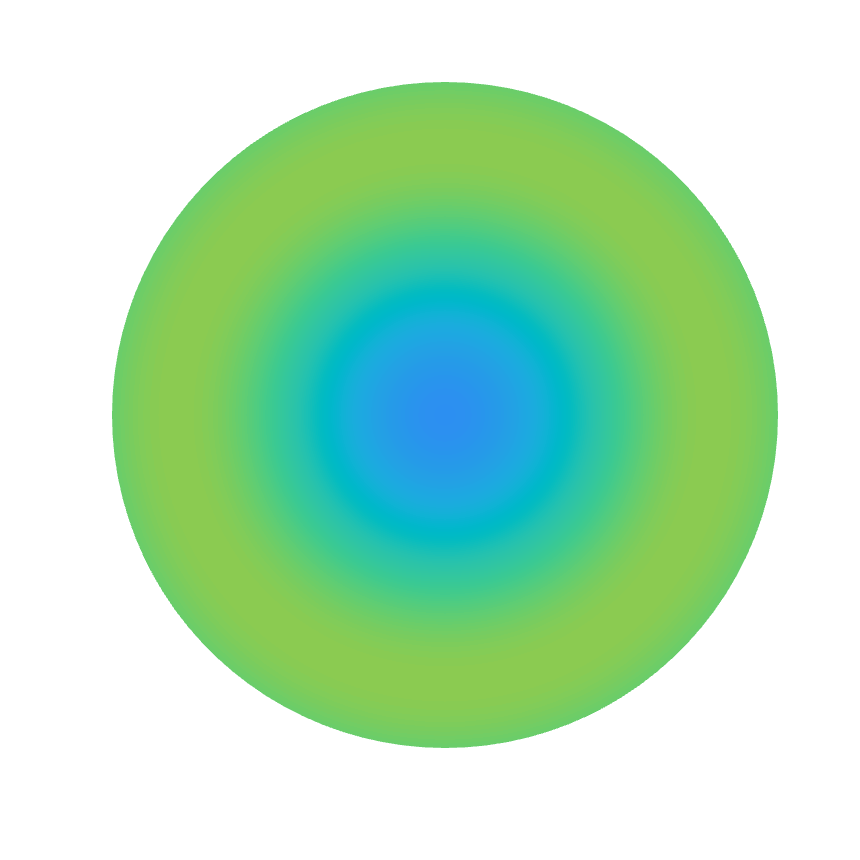}
    \end{subfigure}
    \begin{subfigure}[b]{.2\textwidth}
      \centering
      \includegraphics[width=\textwidth]{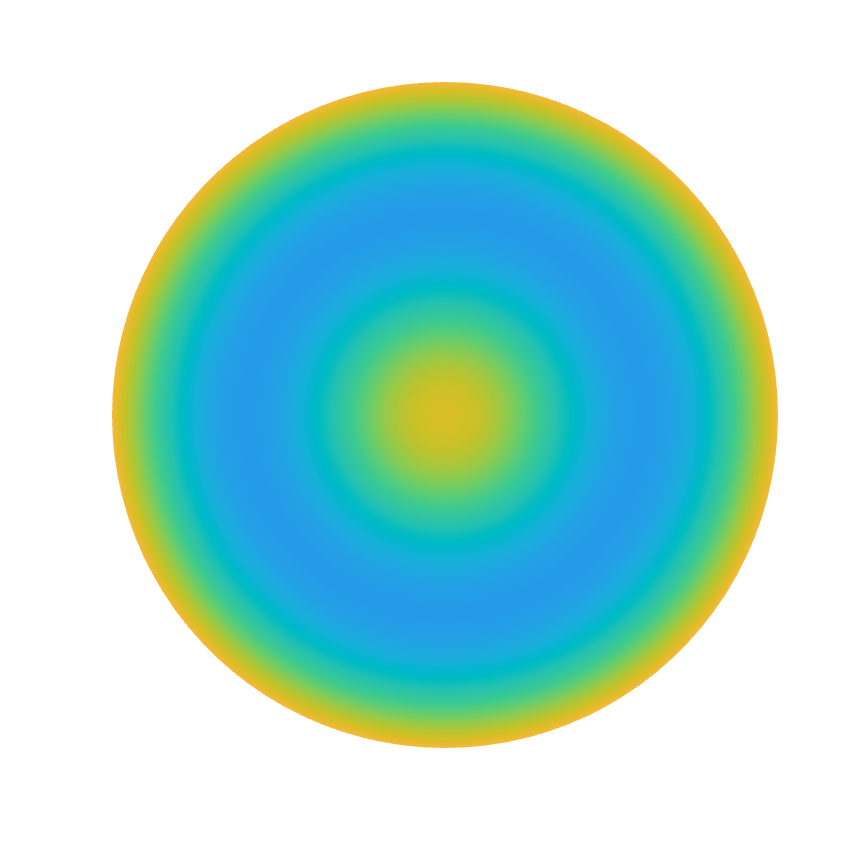}
    \end{subfigure}
    \begin{subfigure}[b]{.2\textwidth}
      \centering
      \includegraphics[width=\textwidth]{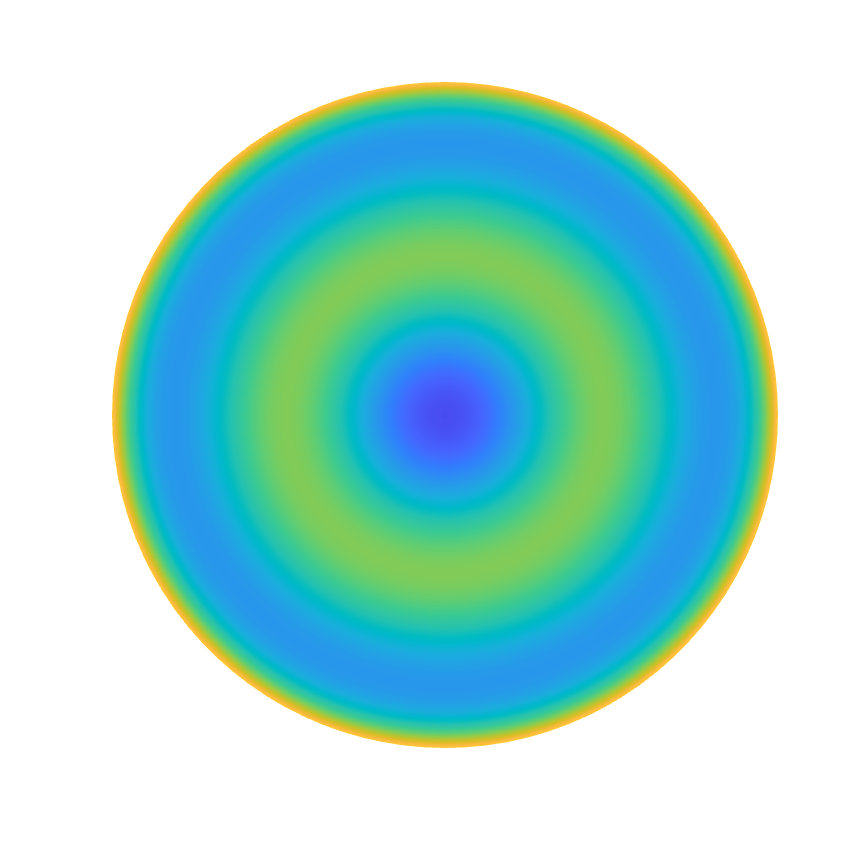}
    \end{subfigure}
    \begin{subfigure}[b]{.2\textwidth}
      \centering
      \includegraphics[width=\textwidth]{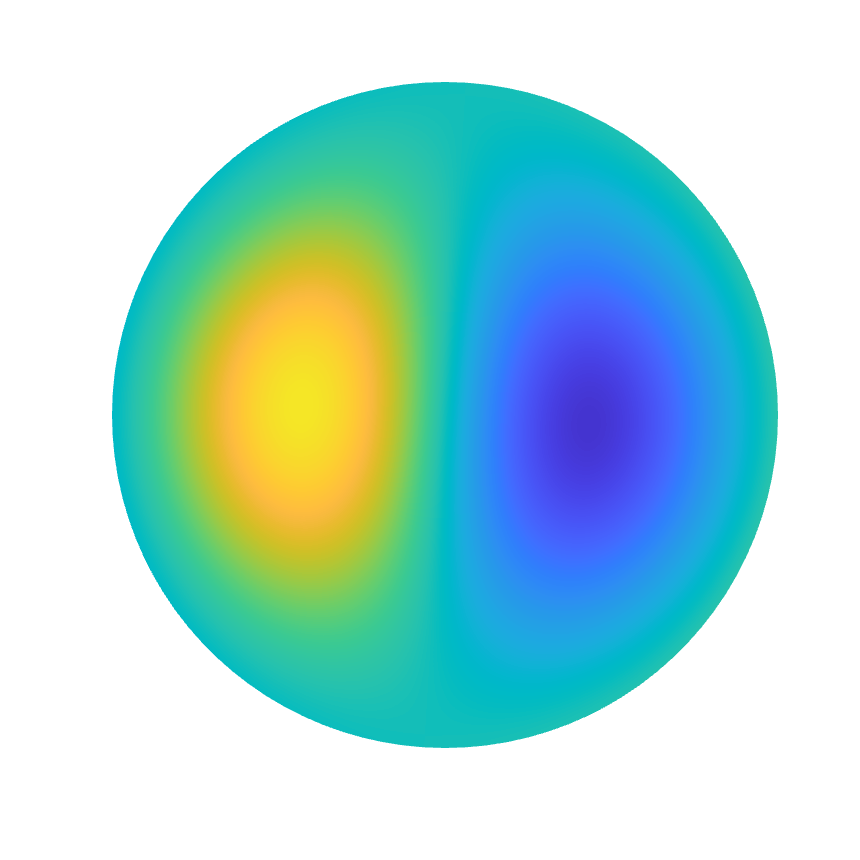}
    \end{subfigure}
    \begin{subfigure}[b]{.2\textwidth}
      \centering
      \includegraphics[width=\textwidth]{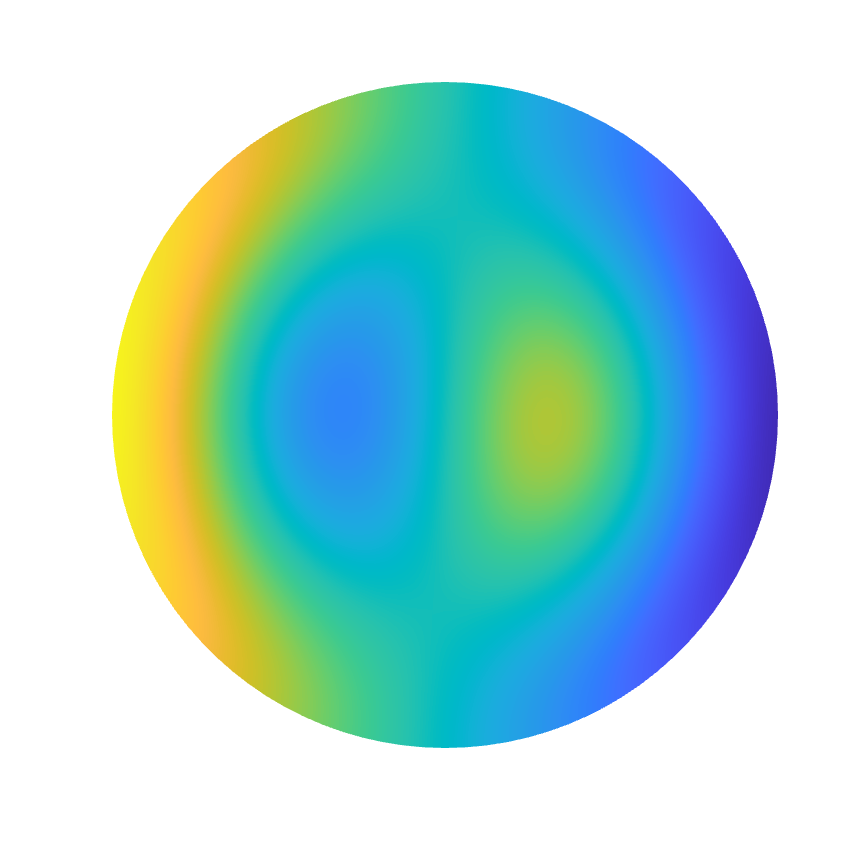}
    \end{subfigure}
      \begin{subfigure}[b]{.2\textwidth}
      \centering
      \includegraphics[width=\textwidth]{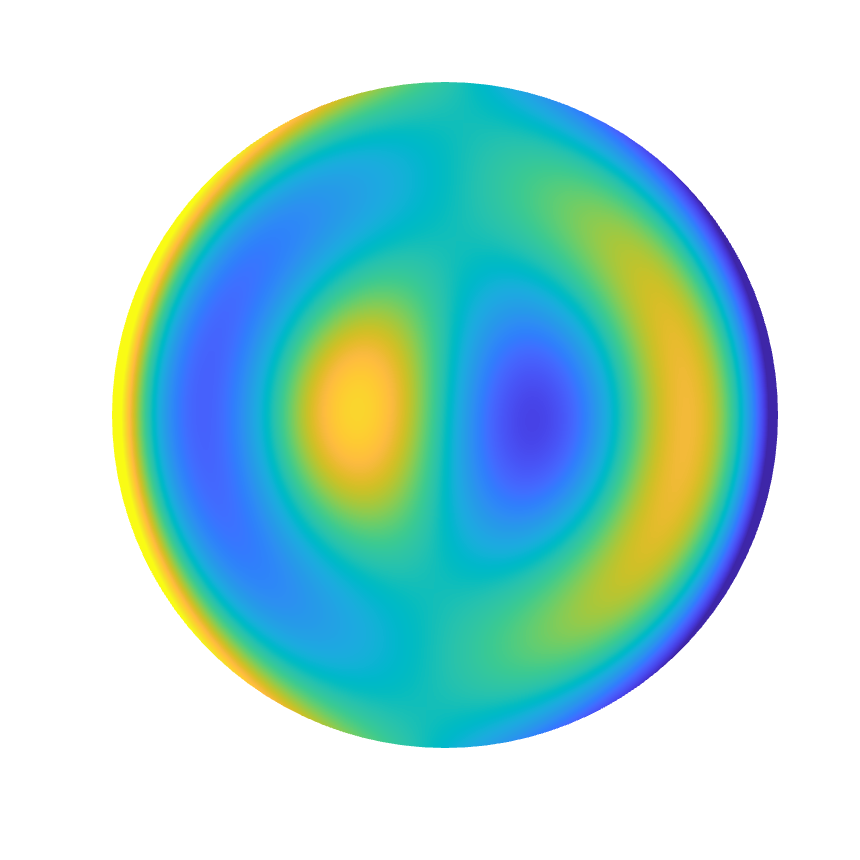}
    \end{subfigure}
    \begin{subfigure}[b]{.2\textwidth}
      \centering
      \includegraphics[width=\textwidth]{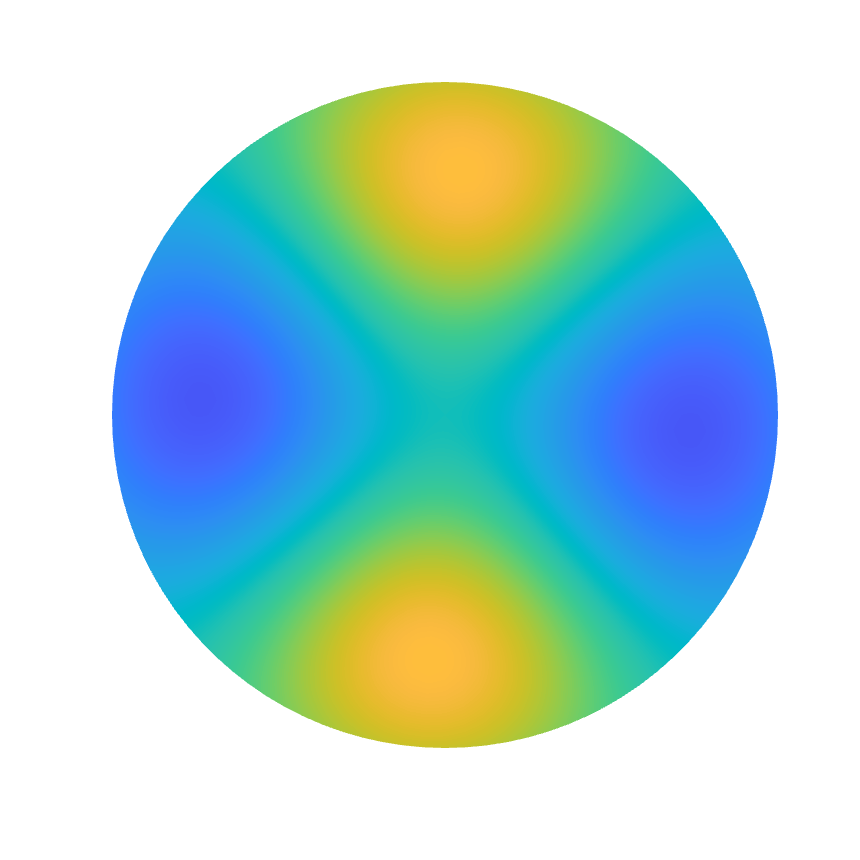}
    \end{subfigure}
    \begin{subfigure}[b]{.2\textwidth}
      \centering
      \includegraphics[width=\textwidth]{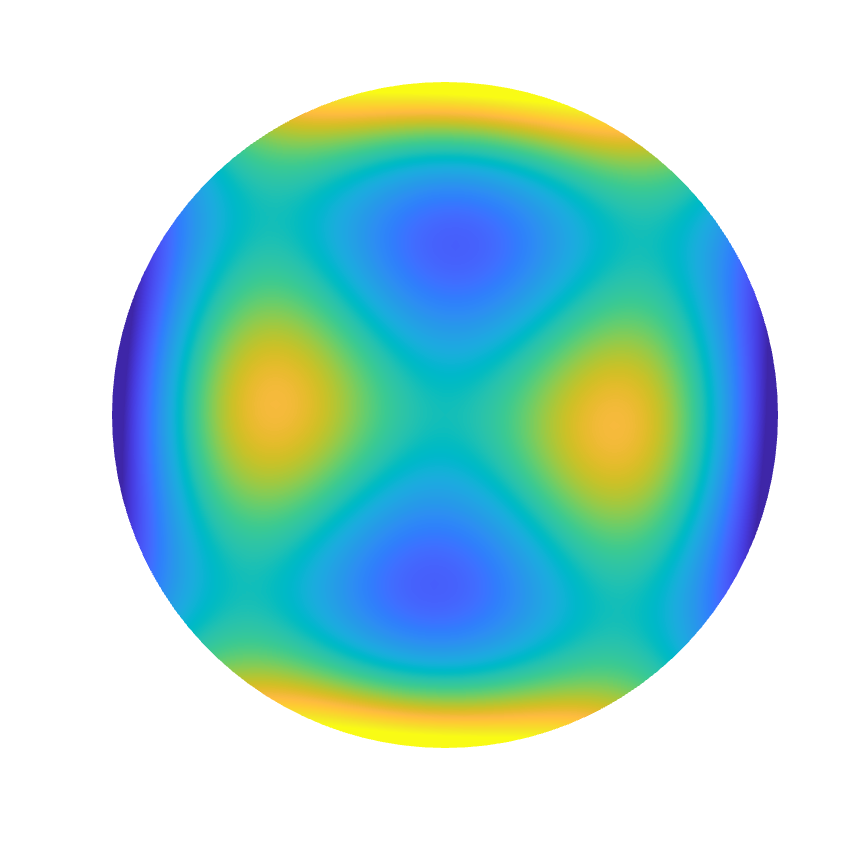}
    \end{subfigure}
    \begin{subfigure}[b]{.2\textwidth}
      \centering
      \includegraphics[width=\textwidth]{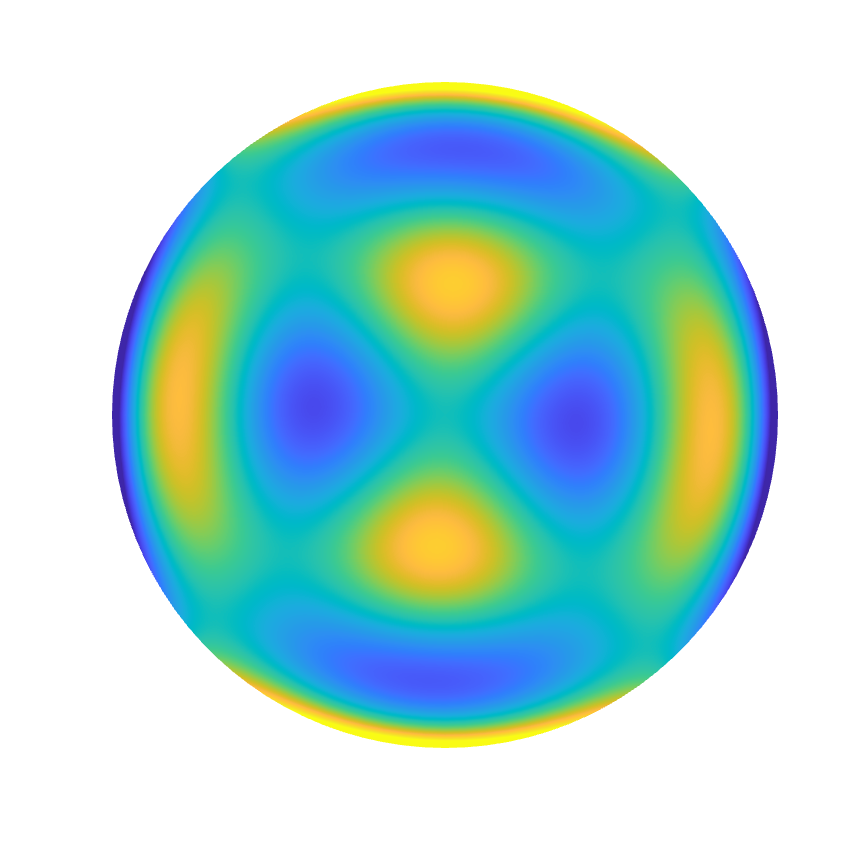}
    \end{subfigure}
    \begin{subfigure}[b]{.2\textwidth}
      \centering
      \includegraphics[width=\textwidth]{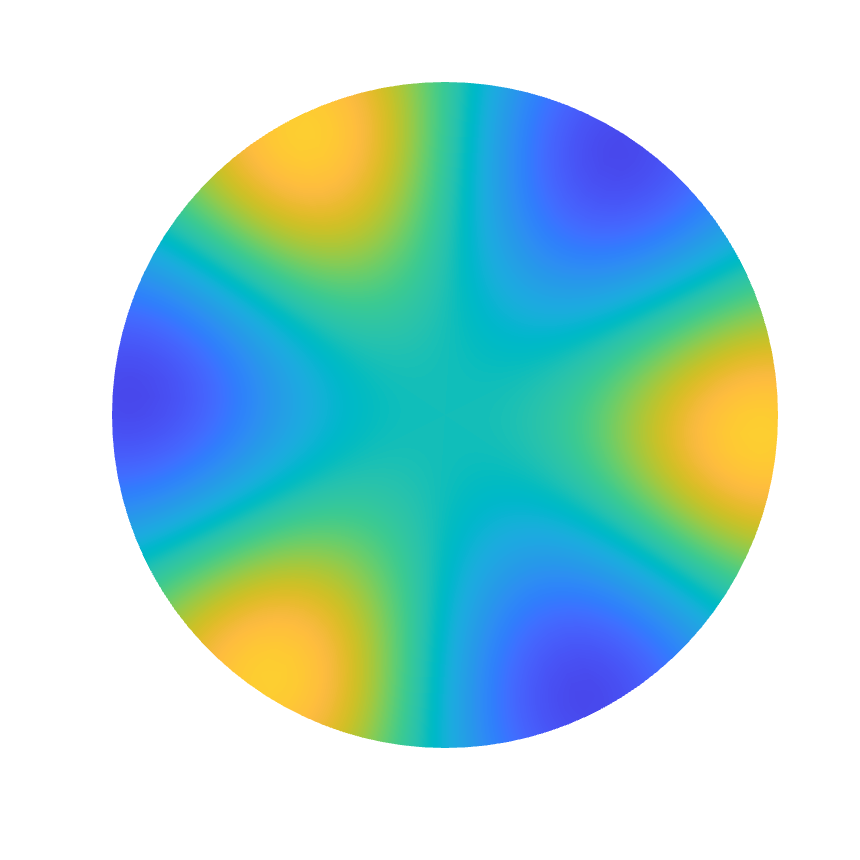}
    \end{subfigure}
    \begin{subfigure}[b]{.2\textwidth}
      \centering
      \includegraphics[width=\textwidth]{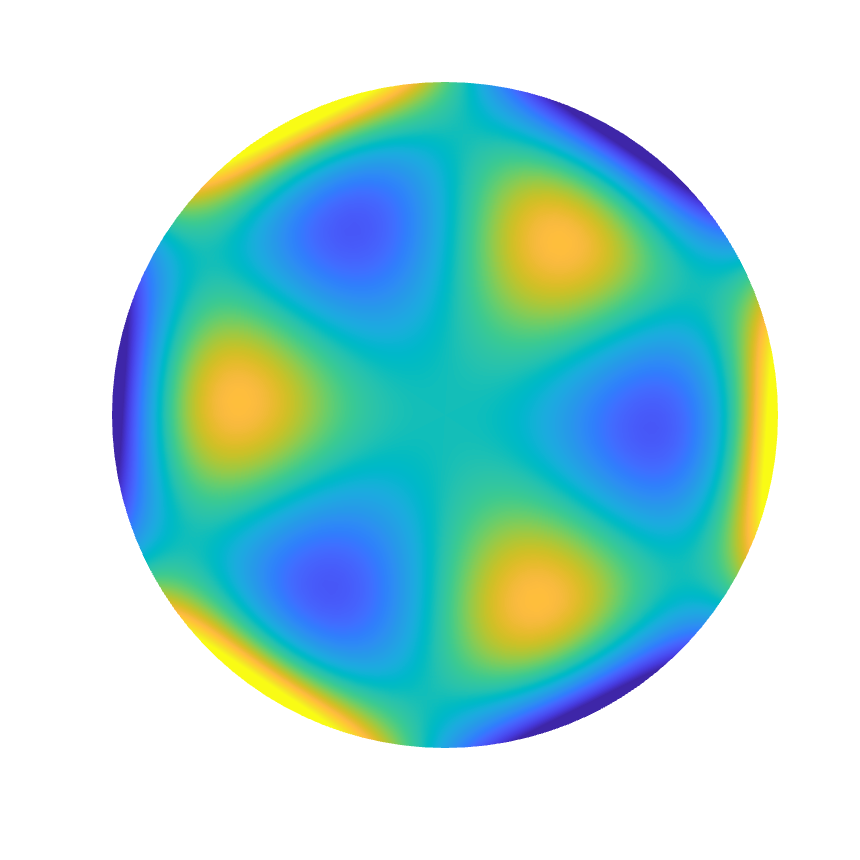}
    \end{subfigure}
    \begin{subfigure}[b]{.2\textwidth}
      \centering
      \includegraphics[width=\textwidth]{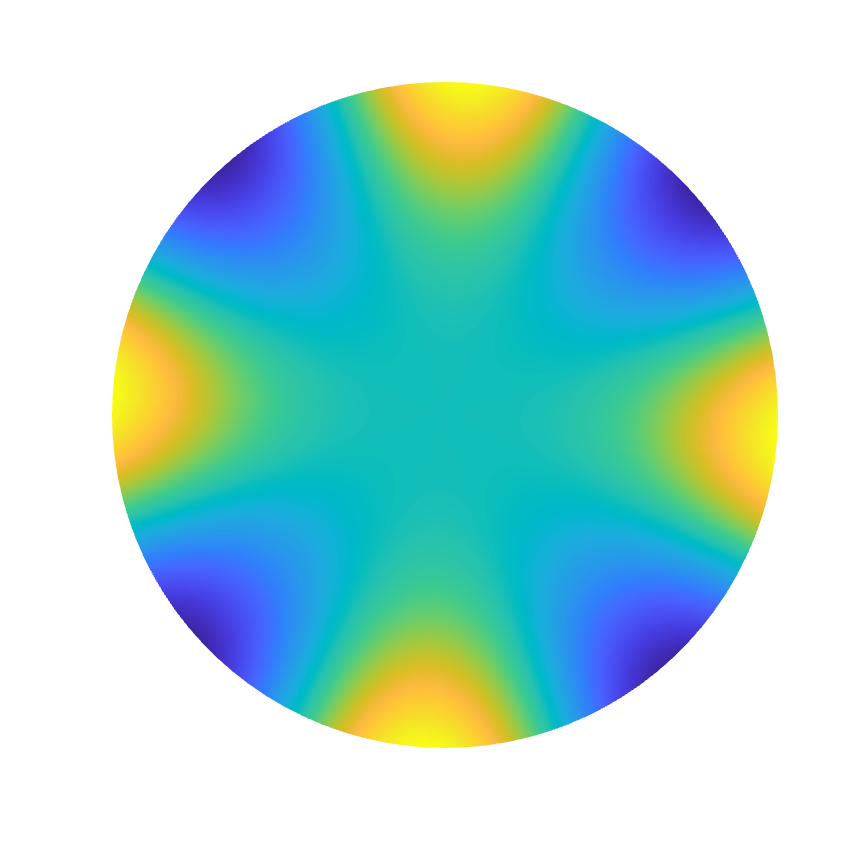}
    \end{subfigure}
    \begin{subfigure}[b]{.2\textwidth}
      \centering
      \includegraphics[width=\textwidth]{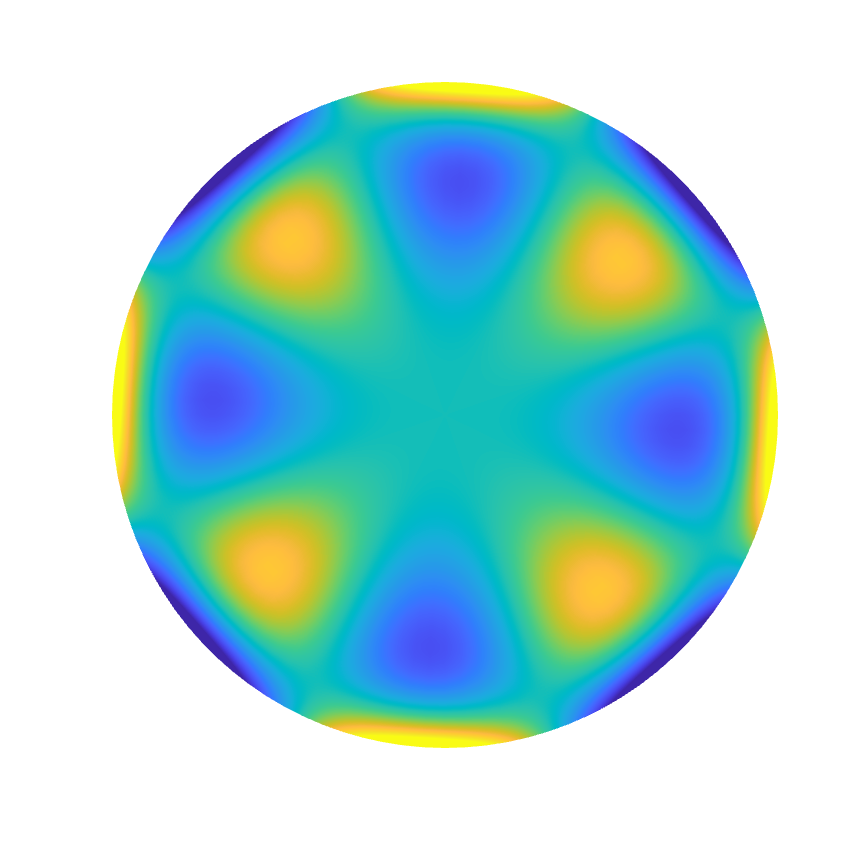}
    \end{subfigure}
    \begin{subfigure}[b]{.2\textwidth}
      \centering
      \includegraphics[width=\textwidth]{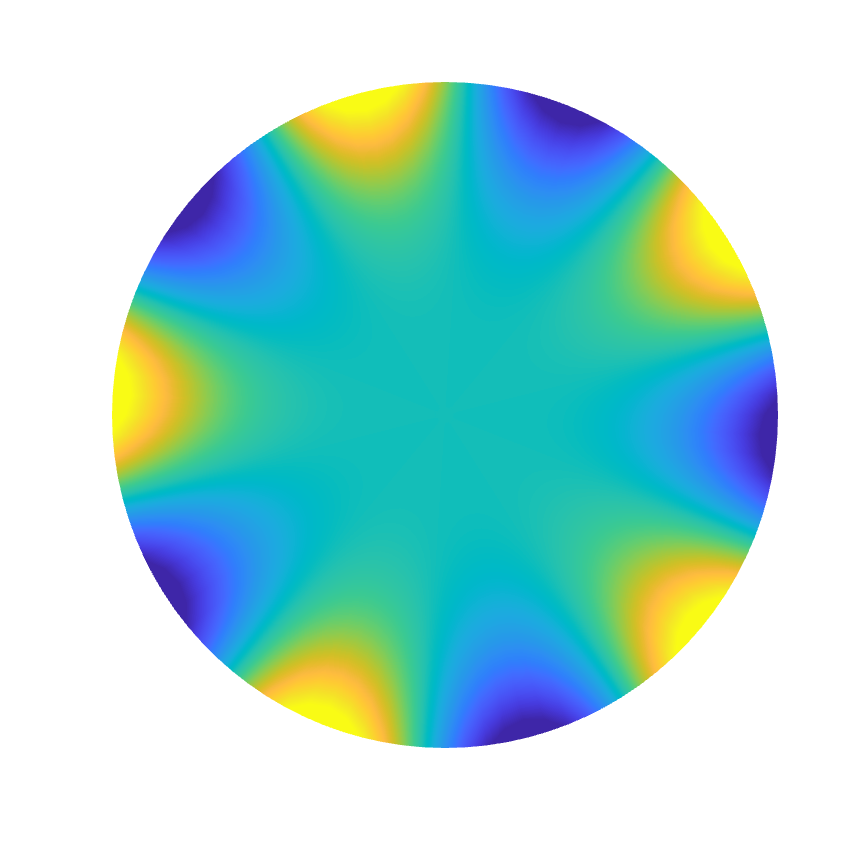}
    \end{subfigure}
    \begin{subfigure}[b]{.2\textwidth}
      \centering
      \includegraphics[width=\textwidth]{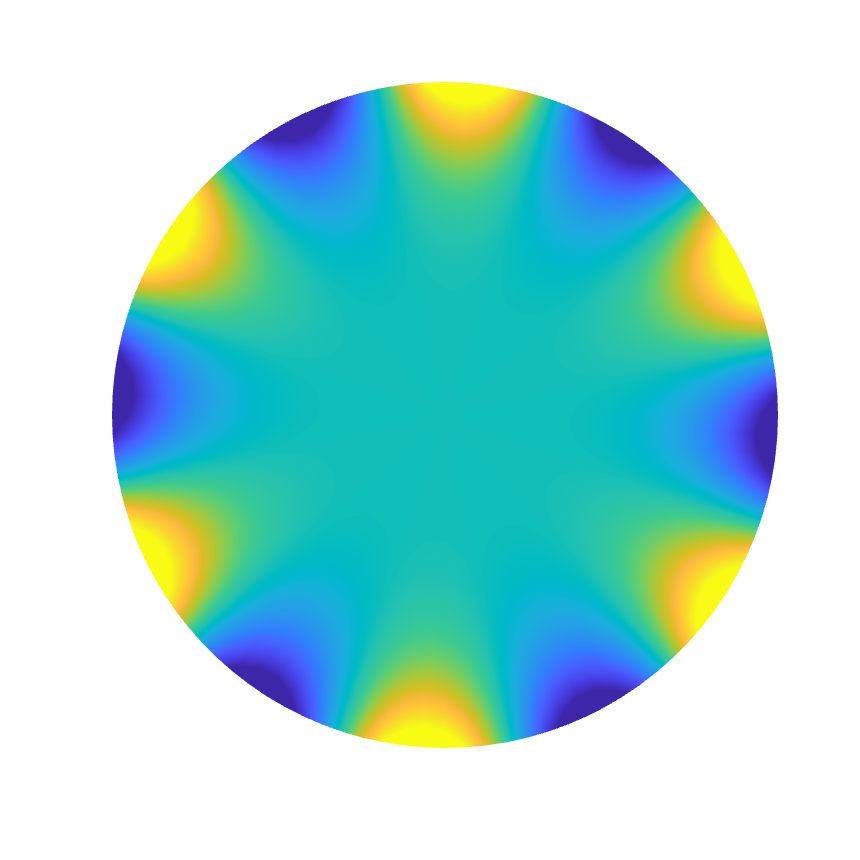}
    \end{subfigure}
    \caption{The real parts of the basis functions $(\Psi_{j,k})_{j\in\{0,\ldots,2N\}, k\in\{0,\ldots,N-\lceil j/2\rceil\}}$ for $\kappa R=2.5$ and $N=3$.
    First row:~$\Psi_{0,0}$, $\Psi_{0,1}$, $\Psi_{0,2}$ , $\Psi_{0,3}$.
    Second row:~$\Psi_{1,0}$, $\Psi_{1,1}$, $\Psi_{1,2}$ , $\Psi_{2,0}$.
    Third row:~$\Psi_{2,1}$, $\Psi_{2,2}$, $\Psi_{3,0}$ , $\Psi_{3,1}$.
    Fourth row:~$\Psi_{4,0}$, $\Psi_{4,1}$, $\Psi_{5,0}$ , $\Psi_{6,0}$. The color scale is the same in all subfigures.
    } 
    \label{fig:Psi}
  \end{figure}
\begin{figure}[t]
    \centering
    \begin{subfigure}[b]{.2\textwidth}
      \centering
      \includegraphics[width=\textwidth]{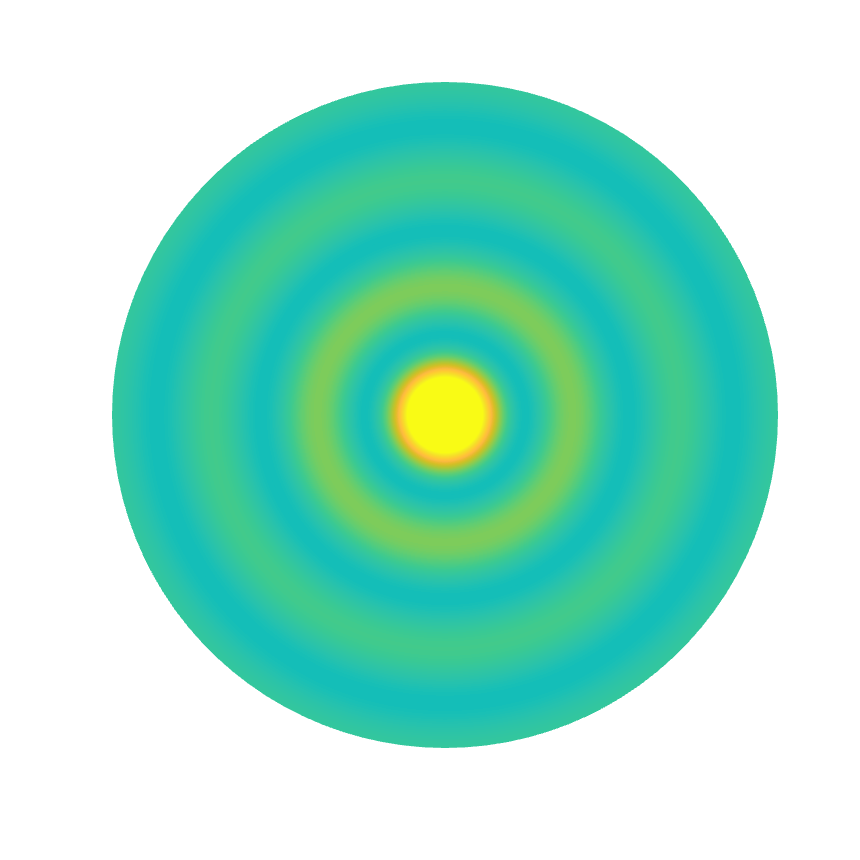}
    \end{subfigure}
    \begin{subfigure}[b]{.2\textwidth}
      \centering
      \includegraphics[width=\textwidth]{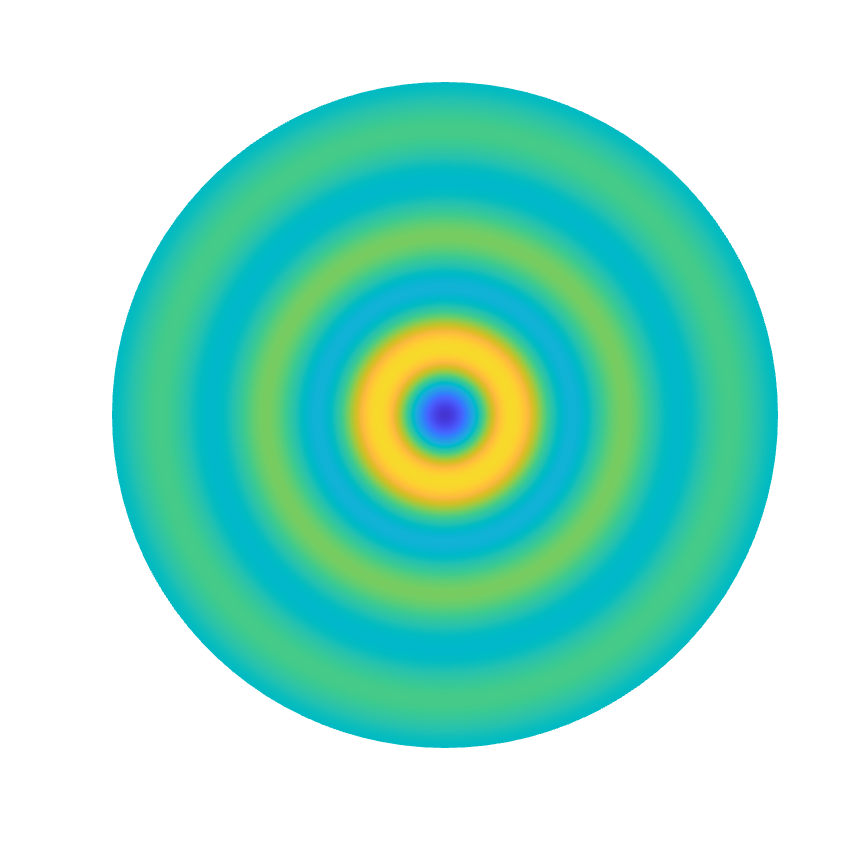}
    \end{subfigure}
    \begin{subfigure}[b]{.2\textwidth}
      \centering
      \includegraphics[width=\textwidth]{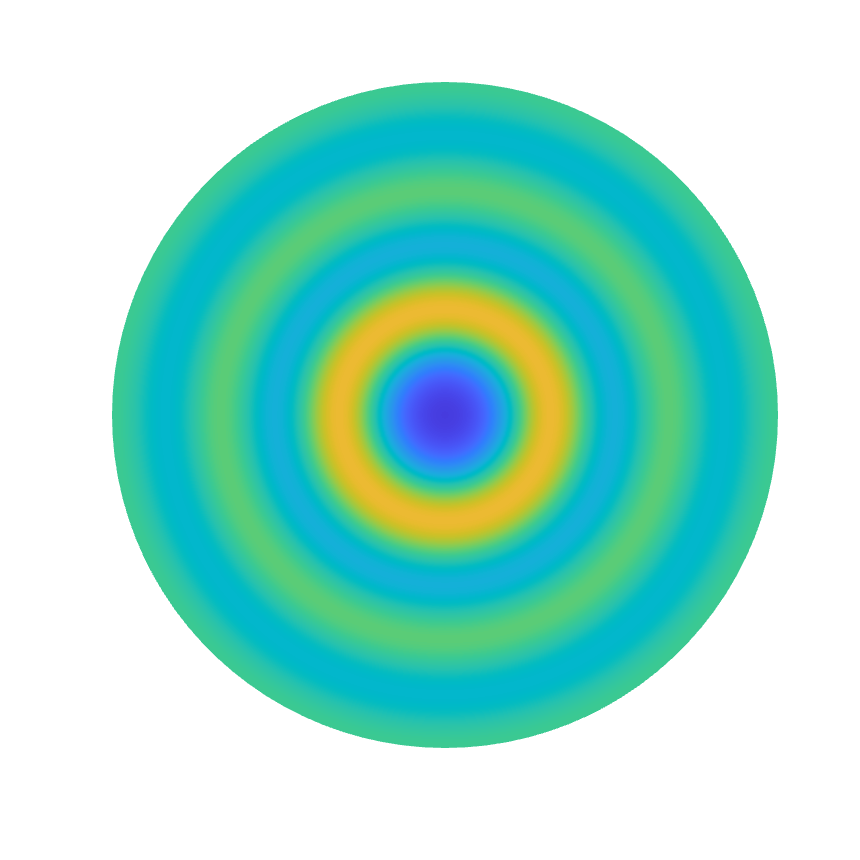}
    \end{subfigure}
    \begin{subfigure}[b]{.2\textwidth}
      \centering
      \includegraphics[width=\textwidth]{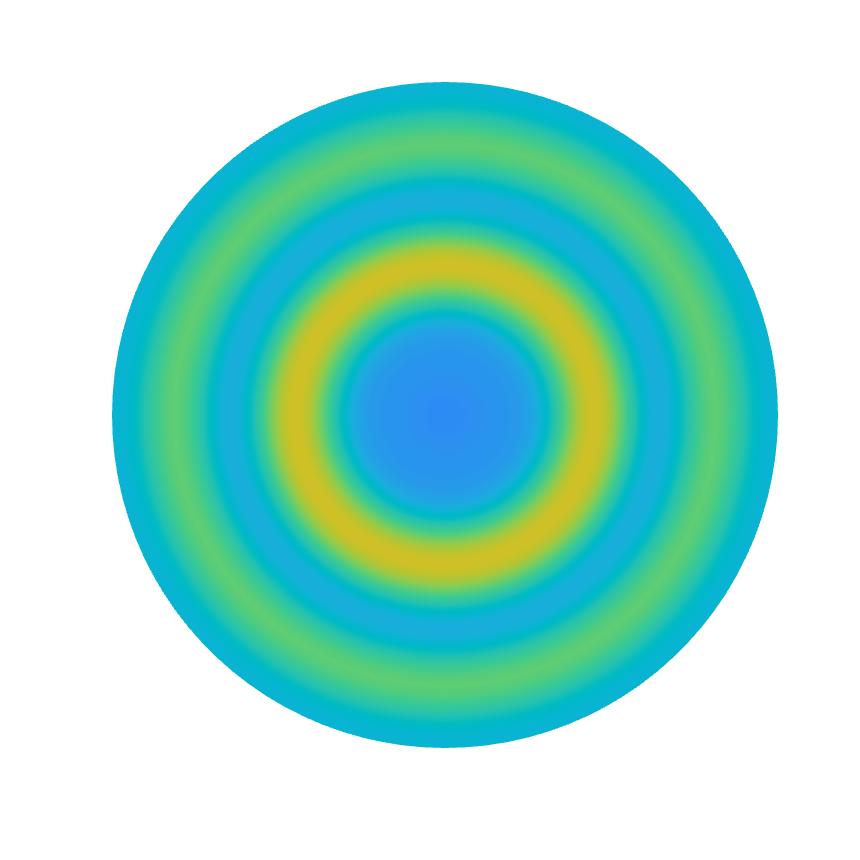}
    \end{subfigure}
    \begin{subfigure}[b]{.2\textwidth}
      \centering
      \includegraphics[width=\textwidth]{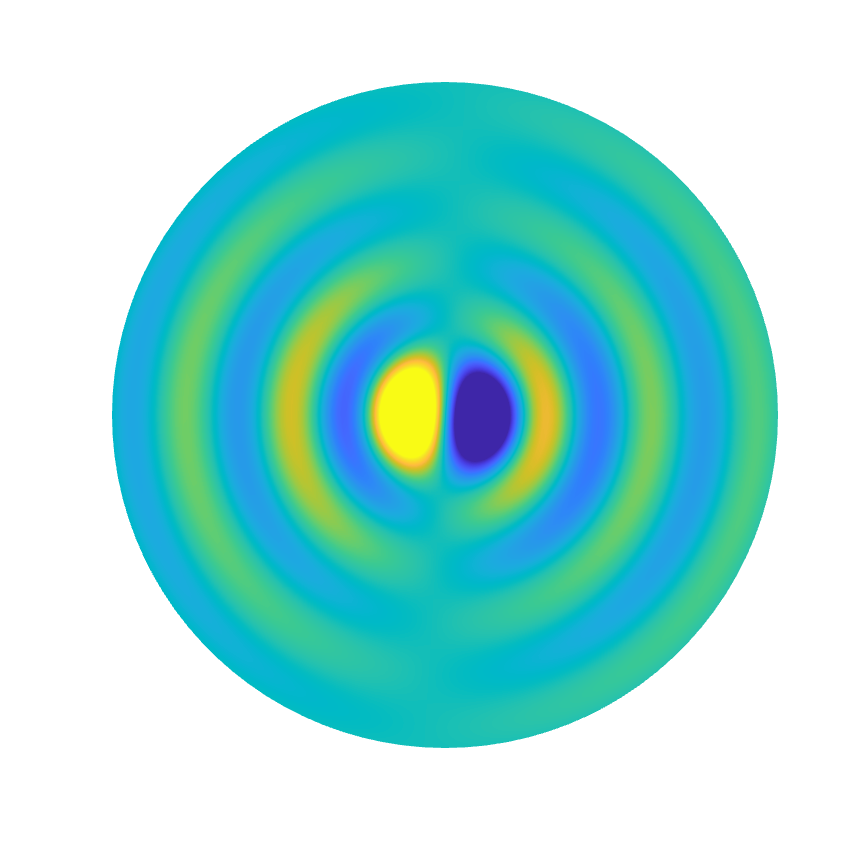}
    \end{subfigure}
    \begin{subfigure}[b]{.2\textwidth}
      \centering
      \includegraphics[width=\textwidth]{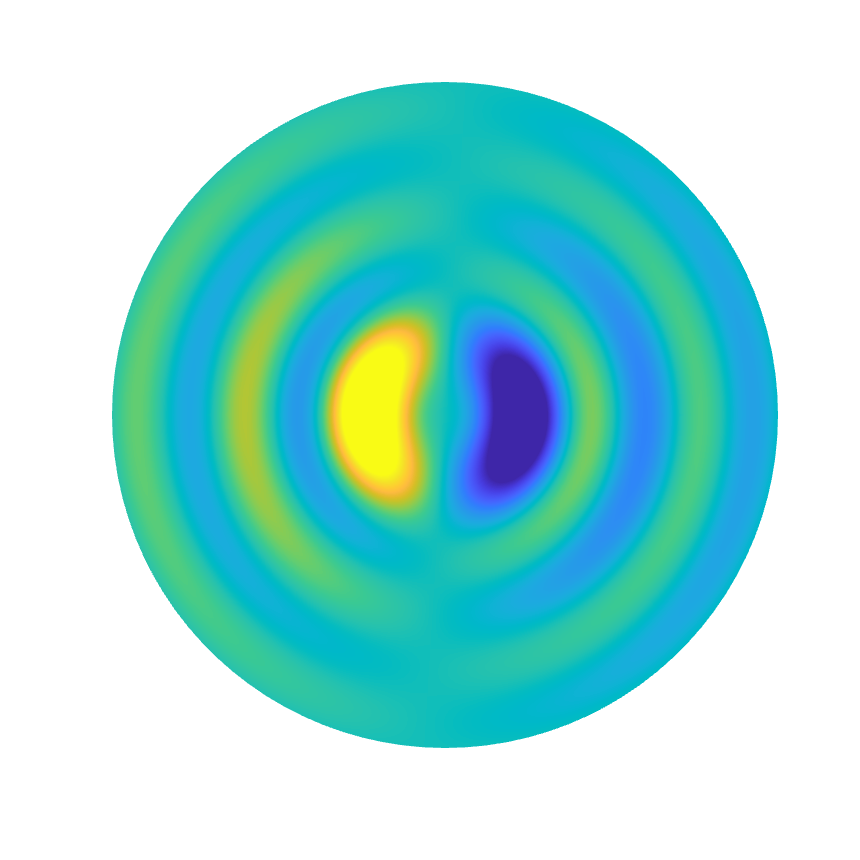}
    \end{subfigure}
      \begin{subfigure}[b]{.2\textwidth}
      \centering
      \includegraphics[width=\textwidth]{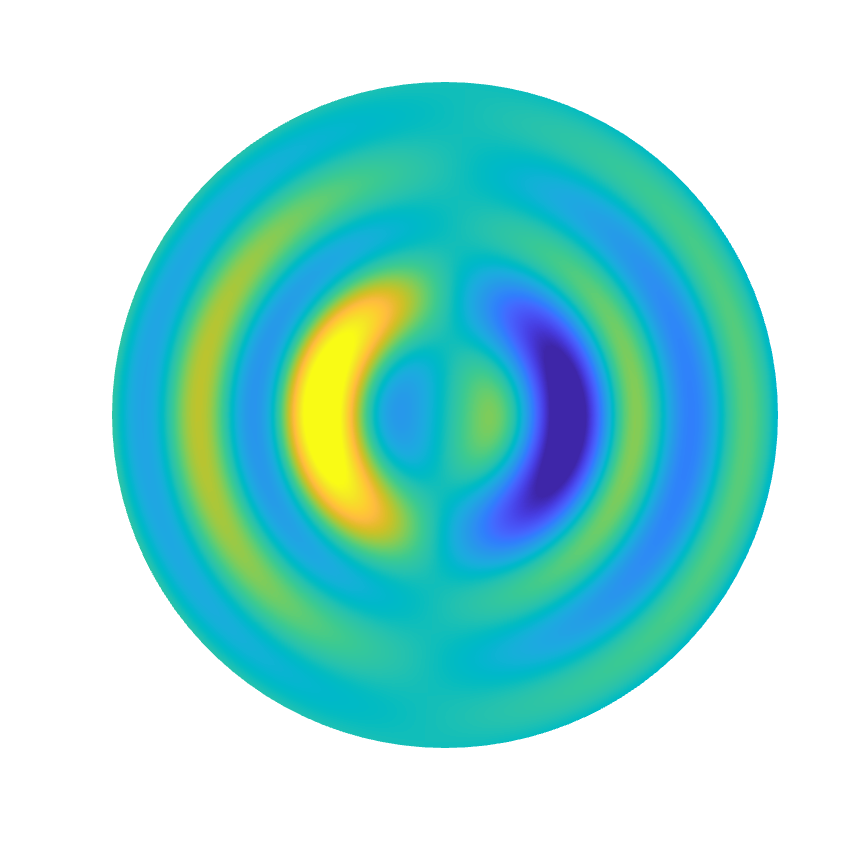}
    \end{subfigure}
    \begin{subfigure}[b]{.2\textwidth}
      \centering
      \includegraphics[width=\textwidth]{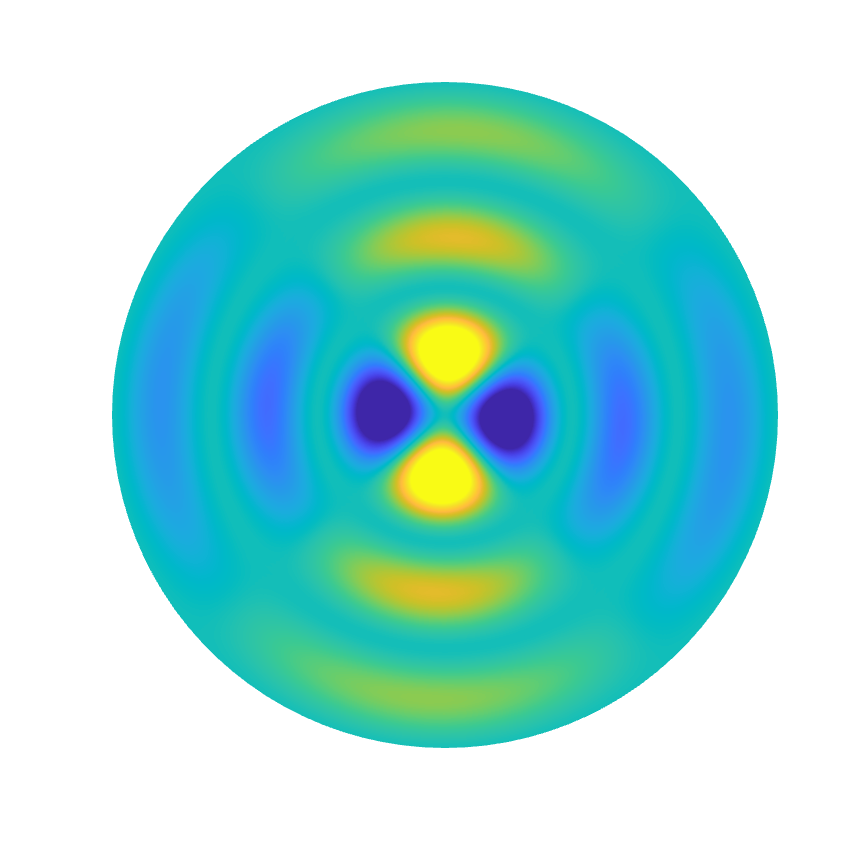}
    \end{subfigure}
    \begin{subfigure}[b]{.2\textwidth}
      \centering
      \includegraphics[width=\textwidth]{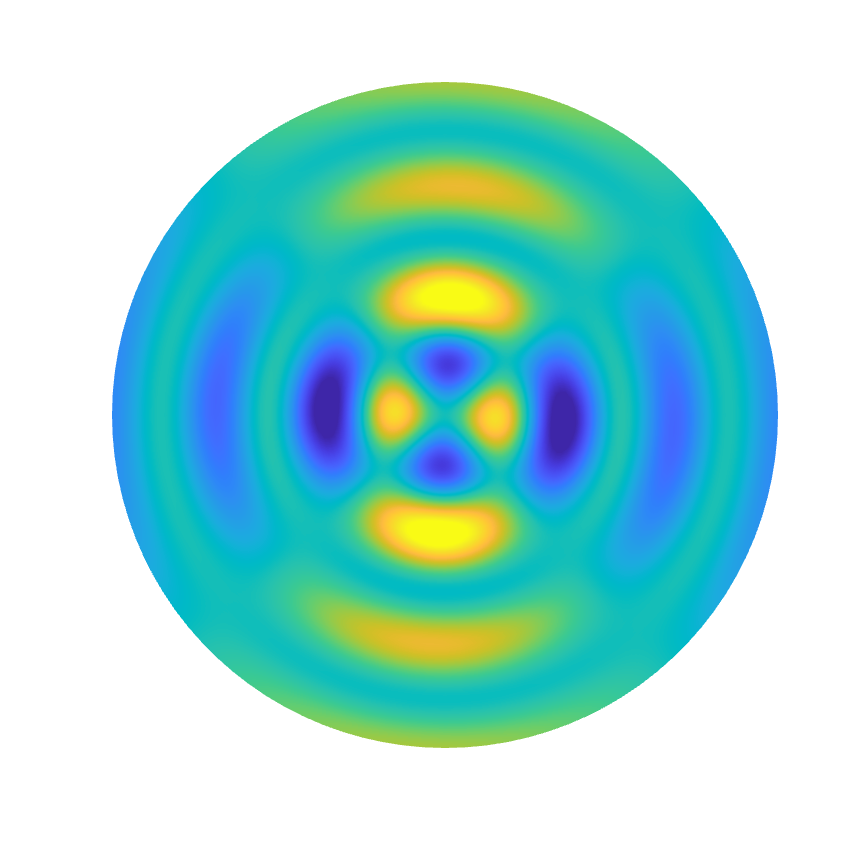}
    \end{subfigure}
    \begin{subfigure}[b]{.2\textwidth}
      \centering
      \includegraphics[width=\textwidth]{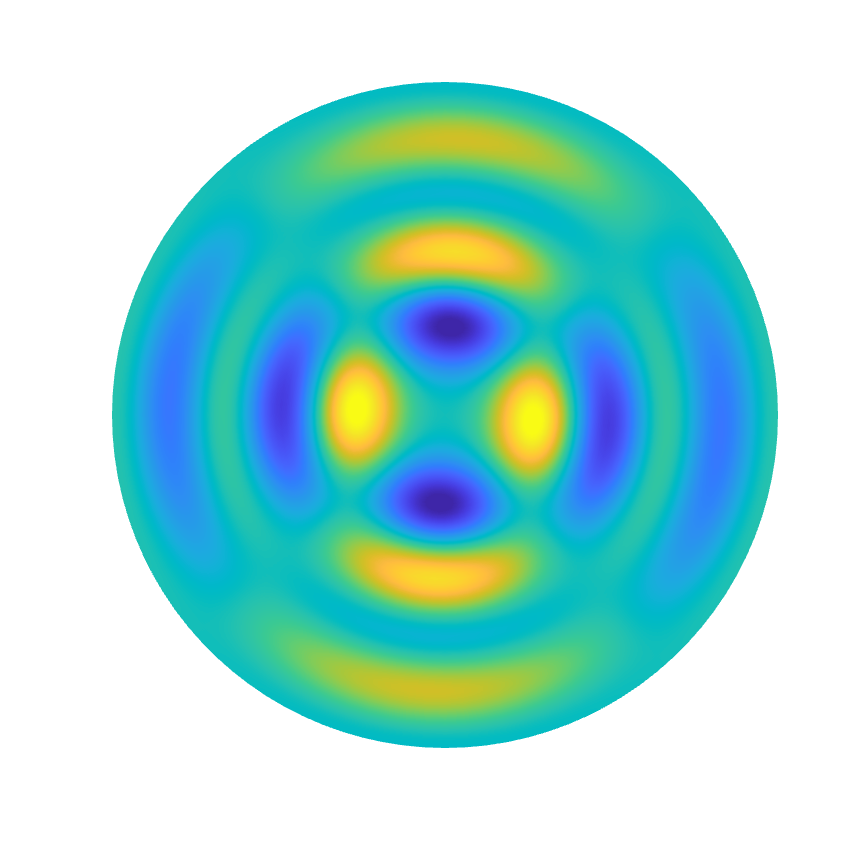}
    \end{subfigure}
    \begin{subfigure}[b]{.2\textwidth}
      \centering
      \includegraphics[width=\textwidth]{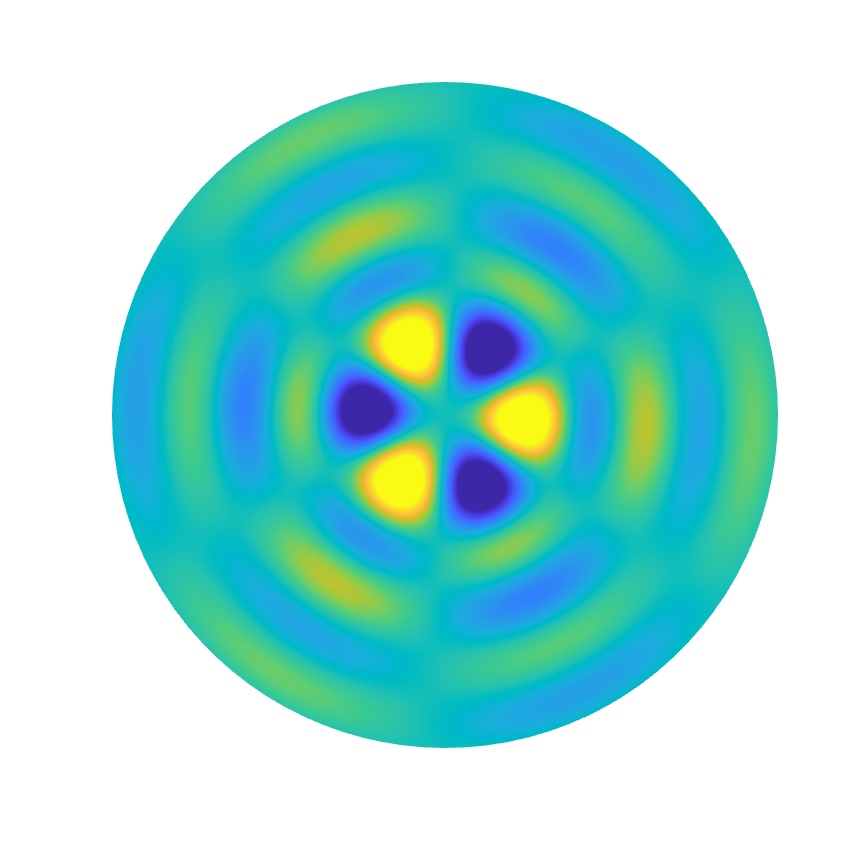}
    \end{subfigure}
    \begin{subfigure}[b]{.2\textwidth}
      \centering
      \includegraphics[width=\textwidth]{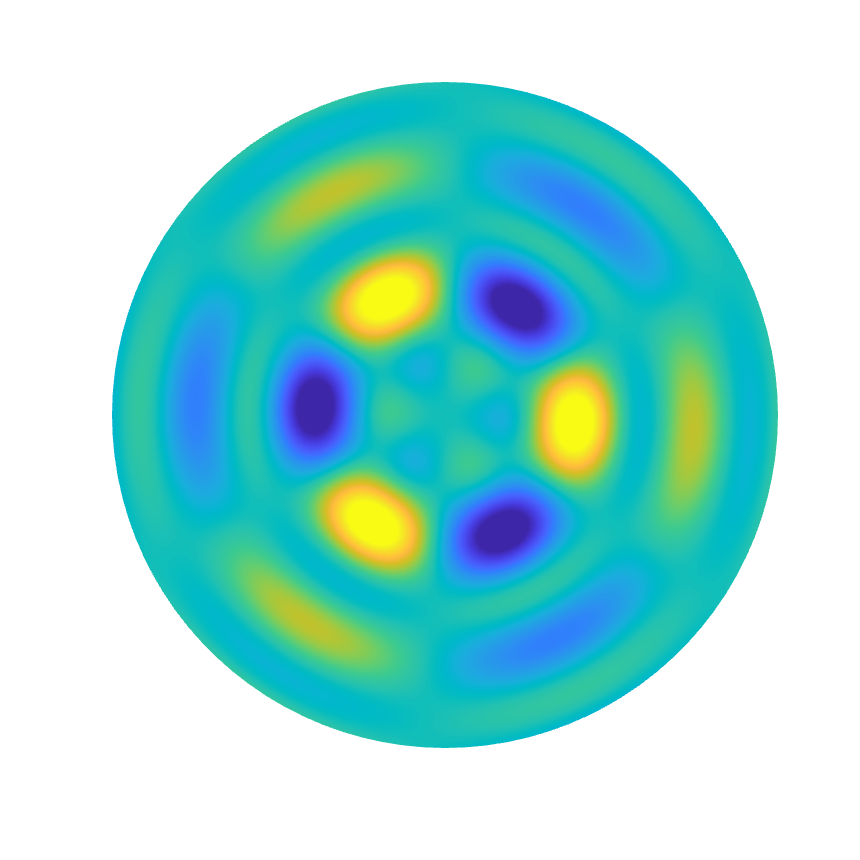}
    \end{subfigure}
    \begin{subfigure}[b]{.2\textwidth}
      \centering
      \includegraphics[width=\textwidth]{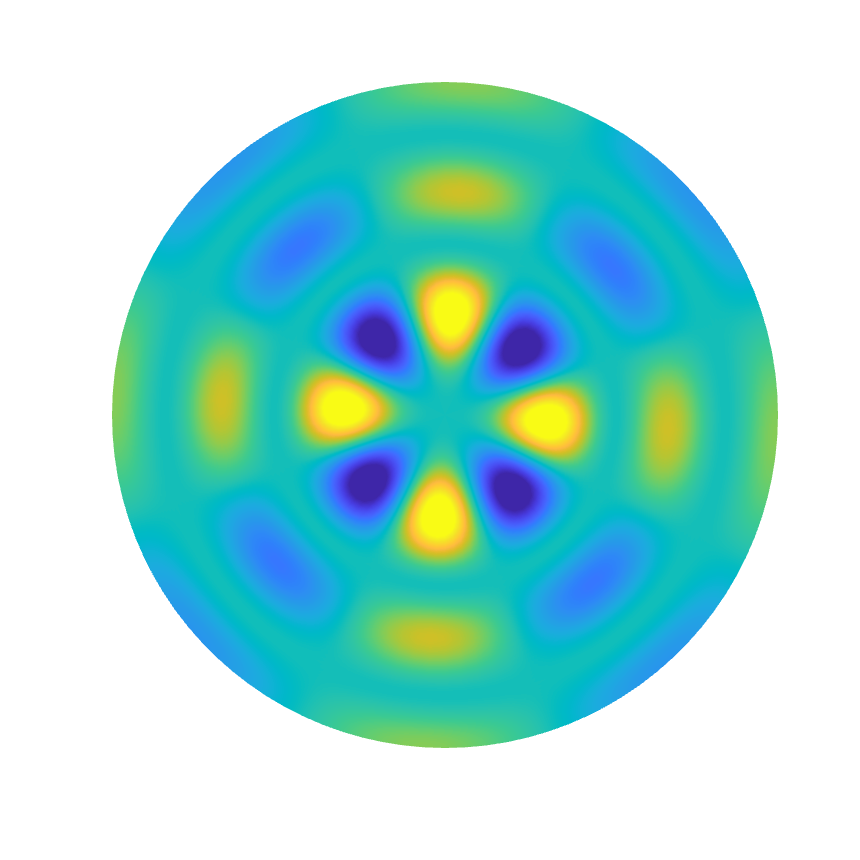}
    \end{subfigure}
    \begin{subfigure}[b]{.2\textwidth}
      \centering
      \includegraphics[width=\textwidth]{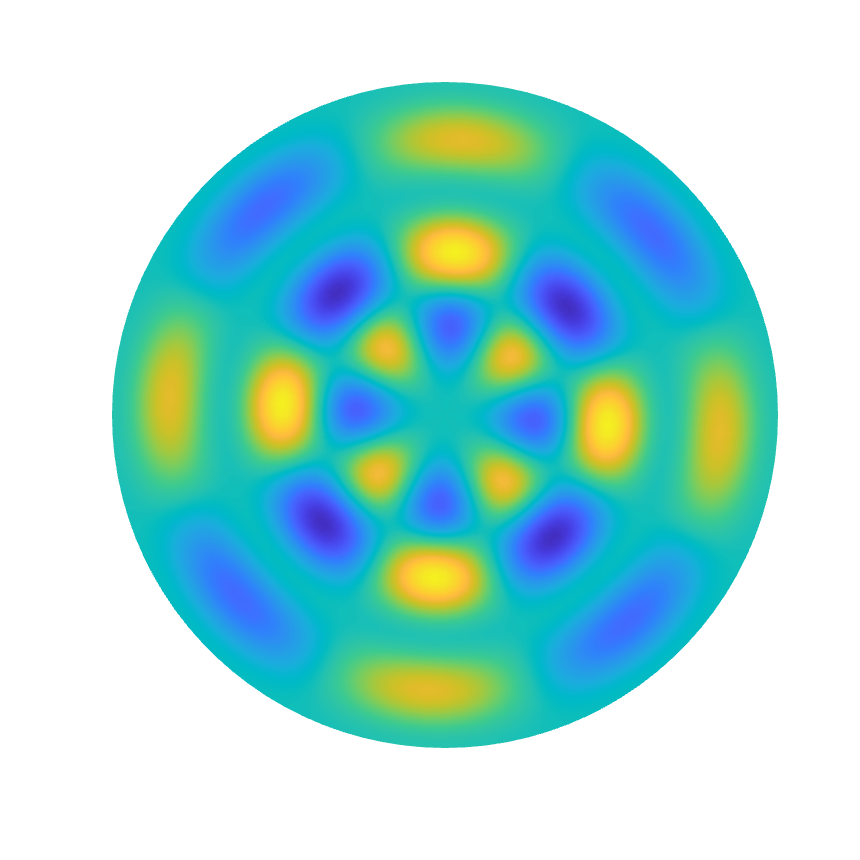}
    \end{subfigure}
    \begin{subfigure}[b]{.2\textwidth}
      \centering
      \includegraphics[width=\textwidth]{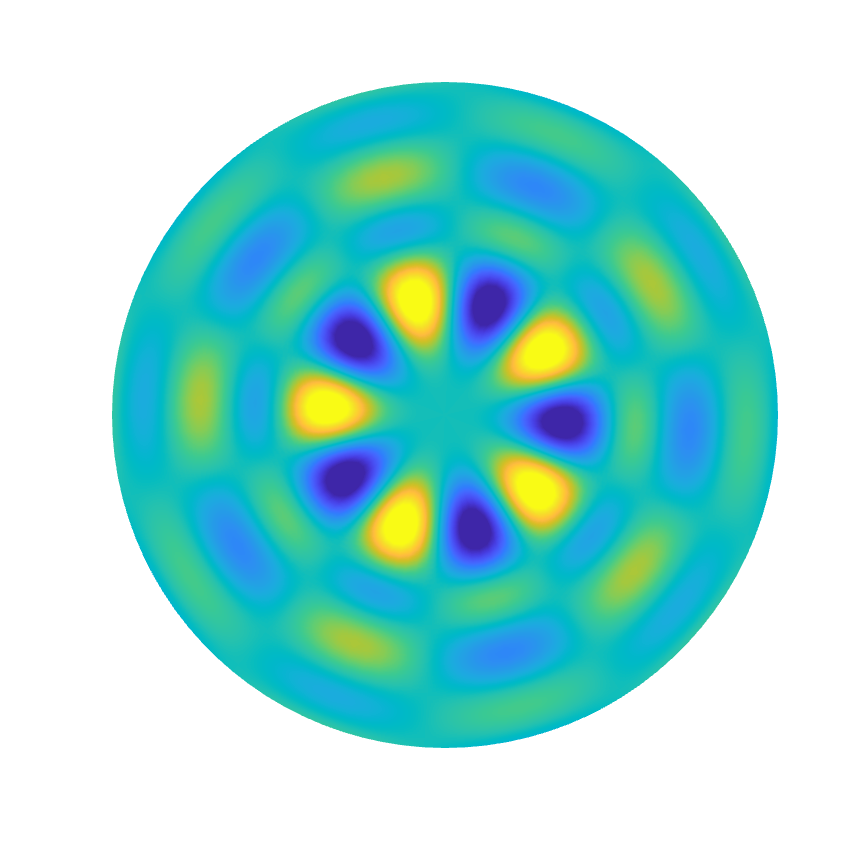}
    \end{subfigure}
    \begin{subfigure}[b]{.2\textwidth}
      \centering
      \includegraphics[width=\textwidth]{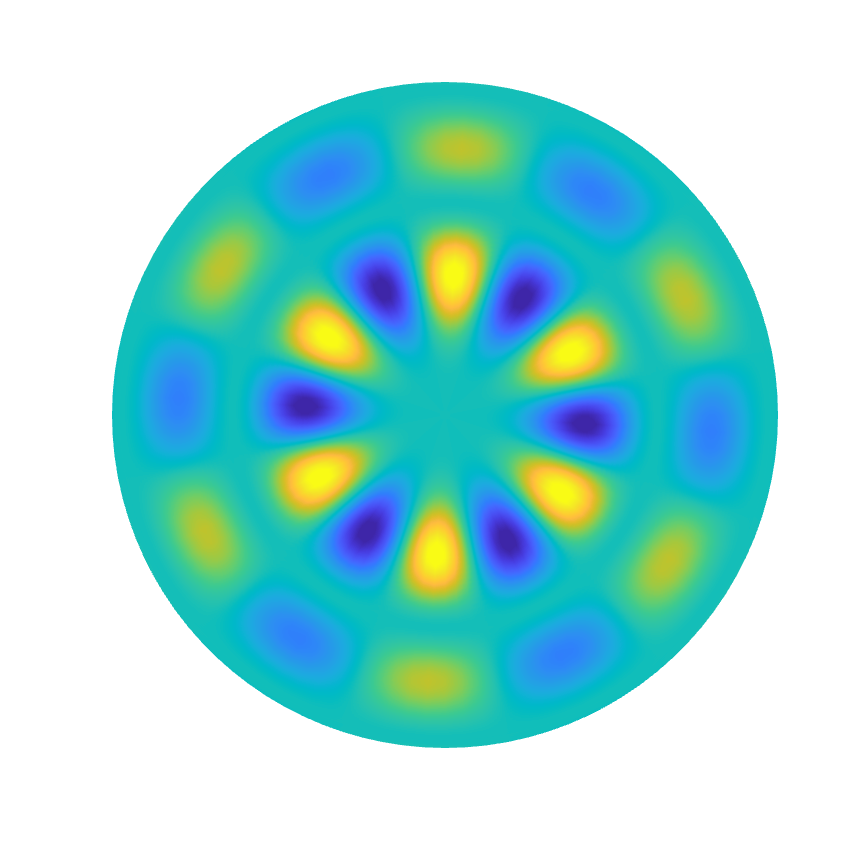}
    \end{subfigure}
    \caption{The real parts of the basis functions $(\Psi_{j,k})_{j\in\{0,\ldots,2N\}, k\in\{0,\ldots,N-\lceil j/2\rceil\}}$ for $\kappa R=10$ and $N=3$.
    First row:~$\Psi_{0,0}$, $\Psi_{0,1}$, $\Psi_{0,2}$ , $\Psi_{0,3}$.
    Second row:~$\Psi_{1,0}$, $\Psi_{1,1}$, $\Psi_{1,2}$ , $\Psi_{2,0}$.
    Third row:~$\Psi_{2,1}$, $\Psi_{2,2}$, $\Psi_{3,0}$ , $\Psi_{3,1}$.
    Fourth row:~$\Psi_{4,0}$, $\Psi_{4,1}$, $\Psi_{5,0}$ , $\Psi_{6,0}$. The color scale is the same in all subfigures.
    } 
    \label{fig:Psi_kR5}
\end{figure}
%
\hfill$\lozenge$
\end{example}

\section{Regularization of the infinite-dimensional systems}
\label{sec:Regularization}

As elaborated in \cite[Lmm.~2.4]{GriSch24}, almost all expansion coefficients $(a_{m,n})_{m,n \in \Z}$ of the given far field data are close to zero and thus negligible. More precisely, it is reasonable to estimate that
\begin{equation}
\label{eq:truncation}
    a_{m,n} \,=\, 0
    \qquad \text{for } |m|,|n|>N
\end{equation}
with $N$ chosen slightly larger than $\kappa R$.
The aforementioned work also provides explicit bounds for the approximation error introduced at this stage.
The nonzero structure of the expansion coefficients for the observed far field data under such a truncation is visualized in Figure~\ref{fig:DataMatrix_GSO_Error}~(left), with only the elements needed for \eqref{eq:forward_substitution} included (i.e.,~without employing the noise filtering step in \eqref{eq:noise_filt}).
\begin{figure}[t]
  \centering
  \includegraphics[width=.33\textwidth]{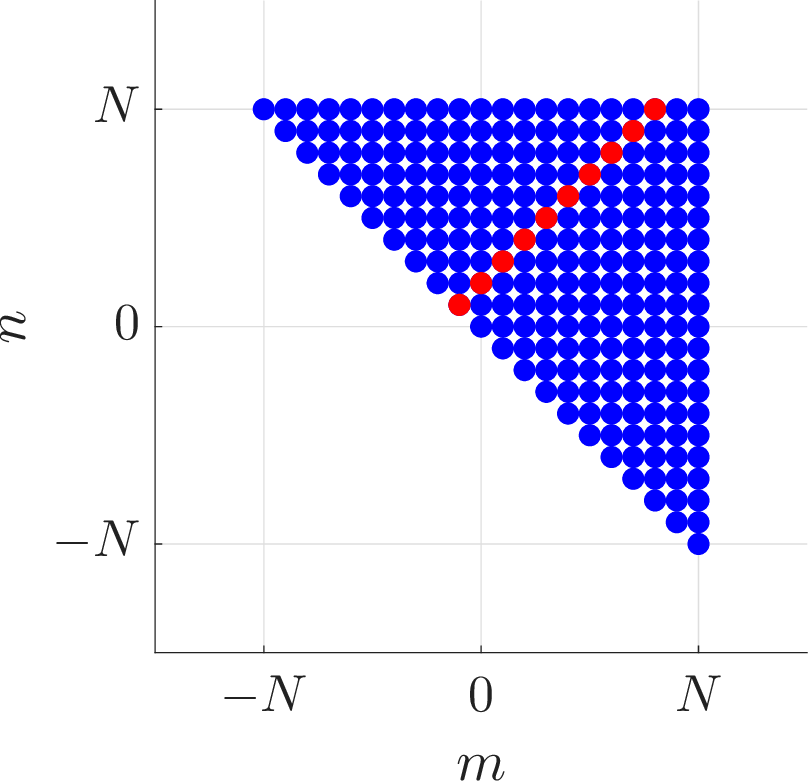}
  \hfill
  \includegraphics[width=.60\textwidth]{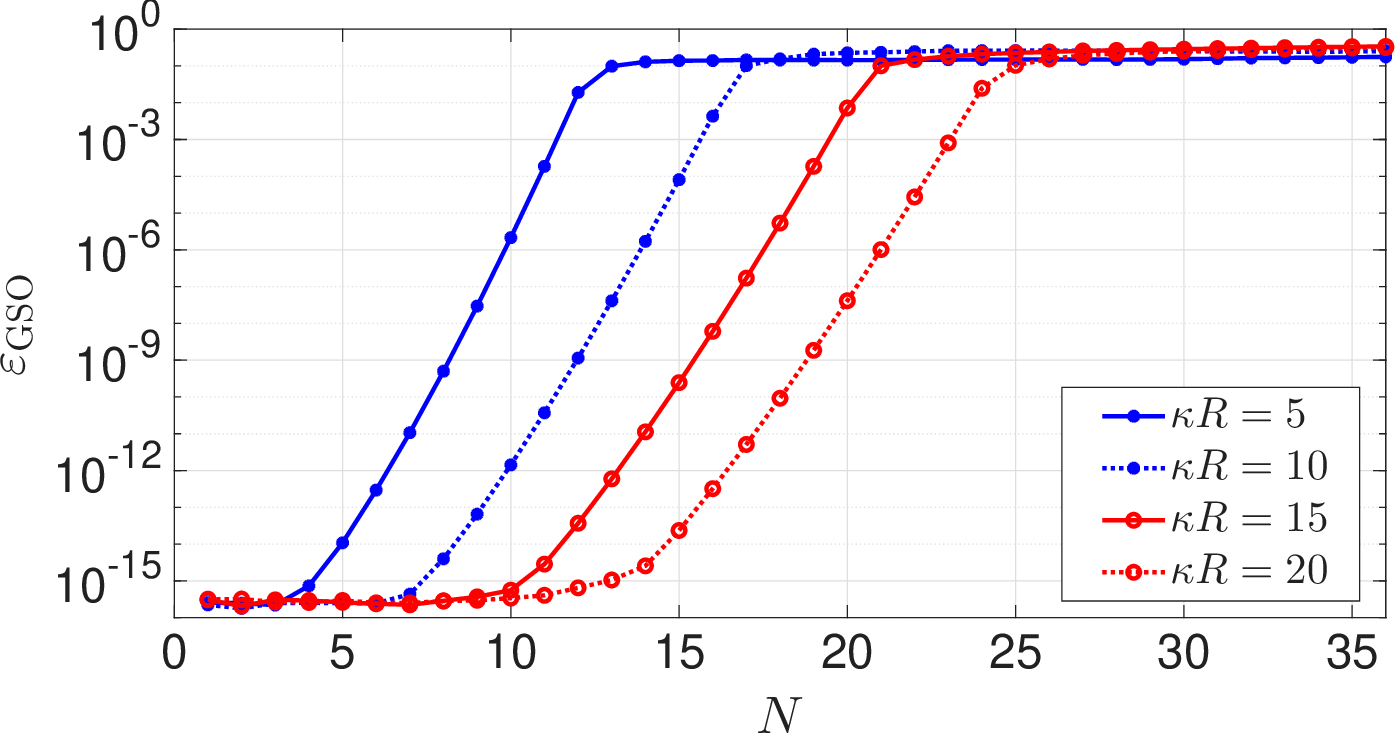}
  \caption{Left: Expansion coefficients $(a_{m,n})_{|m|,|n|\leq N}$ that are taken into account in the inversion (cf.~\eqref{eq:forward_substitution}) after introducing the truncation index $N$ (blue), with a representative diagonal $(a_{m,m-j})_{|m|,|m-j|\leq N}$ corresponding to the angular index $j=-2$ highlighted (red).
  Right: Orthonormalization error \eqref{eq:GSO_error} in the Gram--Schmidt process for different values of $\kappa R$ as functions of the truncation index $N$.
  } 
  \label{fig:DataMatrix_GSO_Error}
\end{figure}
To make the effect on the individual diagonals explicit, we rewrite this assumption as
\begin{equation*}
    a_{m+\lceil j/2\rceil, m-\lfloor j/2\rfloor} \,=\, 0 
    \qquad
    \begin{cases}
        \text{for } |j|>2N \,, \\
        \text{for } -2N \leq j < 0 \,,\, m>N+\lfloor j/2\rfloor=N-\lceil|j|/2\rceil\,, \\
        \text{for } 0 \leq j \leq 2N \,,\, m>N-\lceil j/2\rceil=N-\lceil|j|/2\rceil\,
    \end{cases}
\end{equation*}
for $m\in\N_0$.
Consequently, after introducing a truncation index $N$, only $4N+1$ systems for $j\in\{-2N,\ldots,2N\}$ are to be solved instead of the infinite number of systems in \eqref{eq:infinite_system}, and for a particular angular index $j$, the corresponding system becomes $(N-\lceil|j|/2\rceil+1)$-dimensional, cf.~Example~\ref{ex:Zernike}.

For $j\in\{-2N,\ldots,2N\}$, we define the truncated vectors (cf.~\eqref{eq:full_vectors})
\begin{equation*}
	\bfc^{j,N} \,:=\, \big[c_{j,k-1}\big]_{k=1}^{N+1-\lceil|j|/2\rceil}\,,\;
	\bfa^{j,N} \,:=\, \big[a_{(m-1)+\lceil j/2\rceil,(m-1)-\lfloor j/2\rfloor} \big]_{m=1}^{N+1-\lceil|j|/2\rceil} \,\in\, \C^{N+1-\lceil|j|/2\rceil}
\end{equation*}
and the lower triangular finite-dimensional system matrix
\begin{equation*}
    \bfF^{j,N} \, = \, \big[F^j_{m,k} \big]_{m,k=1}^{N+1-\lceil|j|/2\rceil} \,\in\, \C^{(N+1-\lceil|j|/2\rceil)\times(N+1-\lceil|j|/2\rceil)} \, .
\end{equation*}
Solving the resulting finite-dimensional systems
\begin{equation}
\label{eq:truncated_system}
    \bfF^{j,N}\bfc^{j,N} \,=\, \bfa^{j,N}, \qquad j\in\{-2N,\ldots,2N\},
\end{equation}
in place of the infinitely many infinite-dimensional systems of type \eqref{eq:infinite_system} constitutes the first step of our regularization scheme. If no further regularization is introduced, the forward substitution formula \eqref{eq:forward_substitution} still applies to solving the individual systems in \eqref{eq:truncated_system} for $k \in \{0, \dots, N-\lceil|j|/2\rceil \}$.

\begin{remark}
\label{rem:Choice_N}
    The choice of the truncation index $N$ involves a trade-off between two competing criteria.
    It should be large enough to retain the relevant information in the observed far field data, yet not so large that the Gram--Schmidt orthonormalization becomes numerically unstable.
    Indeed, due to the super-exponential decay of the Bessel function $J_n(t)$ for fixed~$t>0$ as~${n\rightarrow\infty}$, choosing $N$ too large leads to divisions by values that are numerically indistinguishable from zero during the normalization step.
    To illustrate this phenomenon, we construct for fixed $\kappa R>0$ and $N\in\N$ the corresponding $(N+1)^2$ Gram--Schmidt basis functions~${(R_k^{j})_{j=0,\ldots,2N, k=0,\ldots,N-\lceil j/2\rceil}}$ as defined by \eqref{eq:def_radial_basis}.
    For the numerical integration, we use $N_r=100$ Gauss–Legendre quadrature nodes and weights $(r_i,\omega_i)_{i=0,\ldots,N_r-1}$.
    By arranging the involved function evaluations for each $j$ as elements of a matrix, we obtain
    \begin{equation*}
        \bfQ^{j,N}
        \,:=\, 
        \begin{pmatrix}
            R_0^{j}(r_0) & R_1^{j}(r_0) & \cdots & R_{N-\lceil j/2\rceil}^{j}(r_0) \\
            R_0^{j}(r_1) & R_1^{j}(r_1) & \cdots & R_{N-\lceil j/2\rceil}^{j}(r_1) \\
            \vdots & \vdots & \ddots & \vdots \\
            R_0^{j}(r_{N_r-1}) & R_1^{j}(r_{N_r-1}) & \ldots & R_{N-\lceil j/2\rceil}^{j}(r_{N_r-1})
        \end{pmatrix}
        \in \R^{N_r\times (N+1-\lceil j/2\rceil)}\,,
    \end{equation*}
    whose columns are ideally orthonormal with respect to the inner product defined by the diagonal weight matrix $\bfW=\diag(\omega_{0}r_0,\ldots,\omega_{N_r-1}r_{N_r-1})\in\R^{N_r\times N_r}$.
    Consequently, we may quantify how much the constructed basis functions deviate on average from forming orthonormal systems via evaluating the mean error
    \begin{equation}
    \label{eq:GSO_error}
        \varepsilon_{\text{GSO}} \,:=\, \frac 1 {N+1} \sqrt{\sum_{j=0}^{2N}\big\|(\bfQ^{j,N})^\top \bfW\bfQ^{j,N} -\bfI_{N+1-\lceil j/2\rceil} \big\|^2_{\mathrm F}}
    \end{equation}
    where $\bfI_{N+1-\lceil j/2\rceil}\in\R^{(N+1-\lceil j/2\rceil)\times (N+1-\lceil j/2\rceil)}$ is the identity matrix and $\|\ph\|_{\mathrm F}$ denotes the Frobenius norm.
    This error is shown in Figure~\ref{fig:DataMatrix_GSO_Error}~(right) for $N\in\{1,\ldots,40\}$ and $\kappa R\in\{5,10,15,20\}$ on a semi-logarithmic scale.
    All error curves exhibit a regime of very low error for small $N$, followed by a sharp increase near $N \approx \kappa R$, after which they plateau.
    This behavior suggests that choosing $N$ significantly larger than $\kappa R$ is not reasonable.
    The rule $N:=\lceil \rme\kappa R/2\rceil$ used in \cite{GriSch24} turns out to be too large for our purposes, but $N:=\lceil\kappa R\rceil$ proves to be a reliable choice in our numerical experiments.
    \hfill$\lozenge$
\end{remark}

The truncated systems~\eqref{eq:truncated_system} can be rearranged into a single block-diagonal system
\begin{equation}
\label{eq:block_system}
	\bfF^N \bfc^N \,=\, \bfa^N
\end{equation}
by introducing
\begin{align*}
\bfF^N \,&:=\, \diag \!\big(\bfF^{-2N,N},\ldots,\bfF^{2N,N} \big) \,\in\, \C^{M\times M}\,,  \\[1mm]
	\bfa^N \,&:=\, \big[\bfa^{-2N,N}; \ldots;\bfa^{2N,N}\big] \,\in\, \C^M\,, \\[1mm]
	\bfc^N \,&:=\, \big[\bfc^{-2N,N};\ldots ;\bfc^{2N,N} \big] \,\in\, \C^M\,,
\end{align*}
where semicolon denotes vertical concatenation and 
\begin{equation*}
	M \,:=\, \sum_{j=-2N}^{2N} \left(N+1-\left\lceil\tfrac{|j|} 2\right\rceil\right)
	\,=\, (N+1)+4 \sum_{j=1}^N j
	\,=\, (N+1)(2N+1)\,.
\end{equation*}
In our numerical experiments documented in Section \ref{sec:NumericalExamples} below, we further employ a truncated singular value decomposition (SVD) for the system \eqref{eq:block_system} to handle full nonlinear far field data and/or additive noise.
The block-diagonal structure of the system matrix $\bfF^N$ allows its SVD to be completely characterized by SVDs of the individual small blocks $\bfF^{j,N}$, $j=\{-2N,\ldots,2N\}$, thereby avoiding the need to compute a high dimensional SVD. See~\cite{AutGarHirHvy24} for more detailed analysis on applying a truncated SVD to a similar block-diagonal system in the framework of EIT. 

\section{Numerical examples}
\label{sec:NumericalExamples}

In this section, we demonstrate the functionality of our proposed method by numerical examples for both Born far field data and full far field data, with and without additive noise. Moreover, we compare the performance of our method to the recently introduced low-rank method for solving the inverse Born scattering problem~\cite{ZhoAudMenZha26} and with the MATLAB's built-in NUFFT, as it is described in~\cite{DutRoh93}.

We assume the ability to sample the the Born far field data at $2L\in\N$ equiangular illumination and measurement directions. That is, we assume the availability of the (noisy) matrix
\begin{equation}
\label{eq:observation}
    \frac {\pi} L \left[ u_B^\infty(\xhat_m,\idir_n) \right]_{1\leq m,n\leq 2L} \in\C^{2L\times 2L}
\end{equation}
with
\begin{equation*}
    \xhat_l \,=\, \idir_l \,=\, (\cos(\varphi_l),\sin(\varphi_l))^\top
    \,,\qquad
    \varphi_l \,=\, \frac {\pi (l-1)} L
    \,,\qquad
    l=1,\ldots,2L\,.
\end{equation*}
Here, $2L\in\N$ is chosen large enough to resolve all relevant information in the far field data, i.e., $L$ has to be larger than $\kappa$ times the radius of the smallest origin-centered ball containing the whole scatterer $\Omega$ (cf.~\eqref{eq:truncation} and \cite{GriSch24}).
The two-dimensional fast Fourier transform of the matrix 
\begin{equation}
\label{eq:meas_a}
    \frac \pi L\left[ \rme^{-\rmi\kappa\bfc\cdot(\idir_n-\xhat_m)} u_B^\infty(\xhat_m,\idir_n)\right]_{1\leq m,n\leq 2L}\in\C^{2L\times 2L}
\end{equation}
then yields an approximation for the expansion coefficients $(a_{m,n})_{-L\leq m,n\leq L-1}$ as defined in~\eqref{eq:exp_coeff_FF}. If we consider reconstruction from full far field data, then $u_B^\infty(\xhat_m,\idir_n)$ in \eqref{eq:meas_a} is replaced by $u^\infty(\xhat_m,\idir_n)$ from \eqref{eq:full_far_field}.

\begin{example}
\label{exa:three_discs}
As the first example, we consider the piecewise constant contrast
\begin{equation*}
    q=\chi_{B_{r_1}(\bfc_1)}-0.25\chi_{B_{r_2}(\bfc_2)}+0.5\chi_{B_{r_3}(\bfc_3)}\,,
\end{equation*}
where $\bfc_1=(-0.35,0.4)^\top$, $\bfc_2=(-0.1,-0.45)^\top$, $\bfc_3=(0.45,0.1)^\top$, $r_1=r_2=0.3$ and $r_3=0.2$.
We assume the prior knowledge that the ROI $B_1(\bfzero)$ encloses the support of the contrast and consider the wave number $\kappa = 30$. The exact contrast restricted to the ROI is shown in Figure~\ref{fig:exact_contrasts}~(left).
\begin{figure}[t]
    \centering
    \begin{subfigure}[b]{.36\textwidth}
      \centering
      \includegraphics[width=\textwidth]{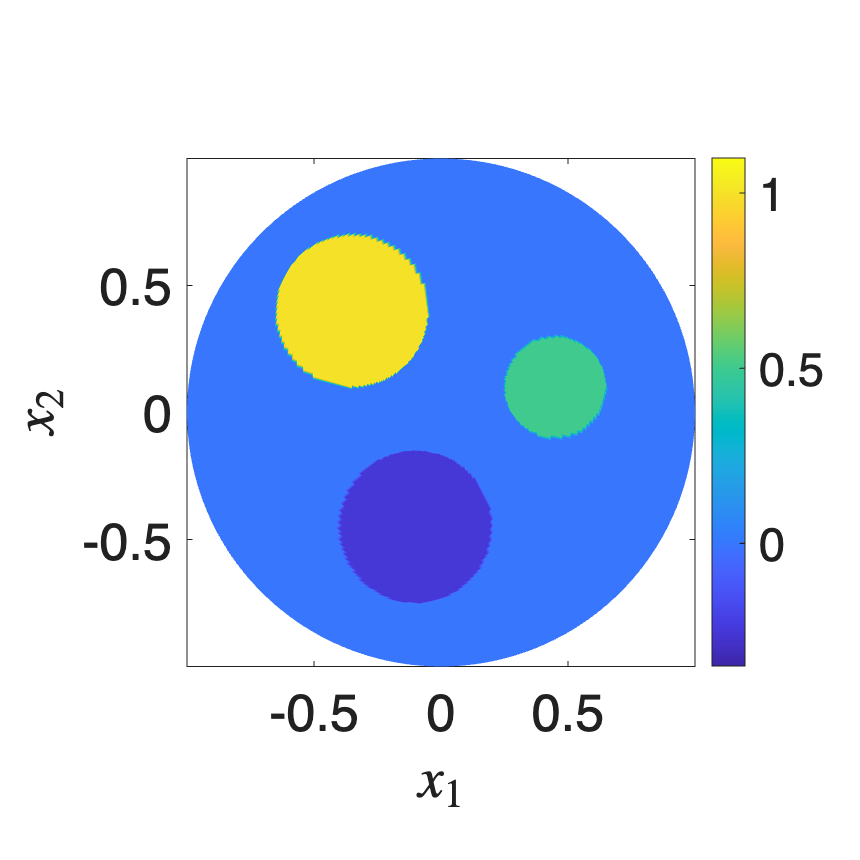}
    \end{subfigure}
    \hspace*{4em}
    \begin{subfigure}[b]{.37\textwidth}
      \centering
      \includegraphics[width=\textwidth]{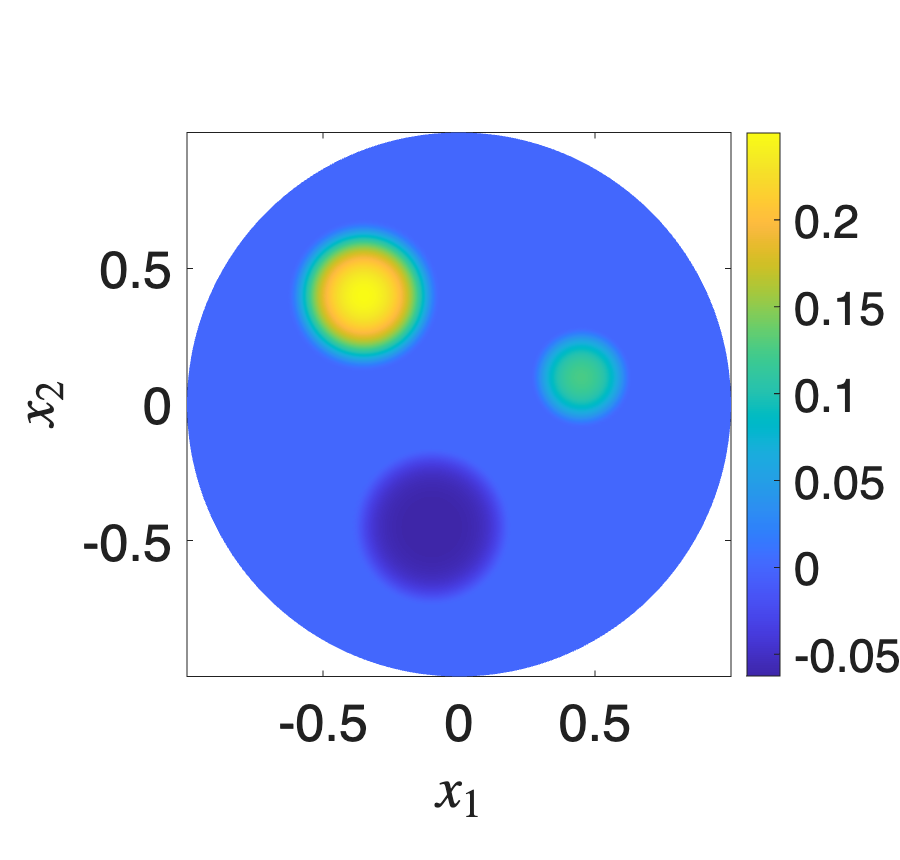}
    \end{subfigure}
    \caption{Exact contrast in Example \ref{exa:three_discs} (left) and in Example \ref{exa:three_discs_smooth} (right).}
    \label{fig:exact_contrasts}
\end{figure}
In this setting, the Born far field data can be expressed analytically: by separately considering the expansion coefficients of the Born far field operator deduced in  Example~\ref{exa:analytic_example} for the three discoidal inclusions, it follows that 
\begin{equation*}
    u_B^\infty(\xhat_m,\idir_n) \,=\, \sum_{i=1}^3 \sum_{l=1}^\infty 2\pi^2(\kappa r_i)^2
    \left( J_{l-1}^2(\kappa r_i)-J_{l-2}(\kappa r_i)J_l(\kappa r_i)\right)\bfe^{\bfc_i}_l(\xhat_m-\idir_n)
\end{equation*}
for $m,n=1,\ldots,2L$.
We choose $L=125$ and truncate the series at $250$ terms.
For expanding the contrast, we use $N_r=250$ Gauss--Legendre nodes in the radial direction and $N_\varphi=250$ equiangular nodes in the angular direction.

\begin{figure}[th!]
    \centering
    \begin{subfigure}[b]{.36\textwidth}
      \centering
      \includegraphics[width=\textwidth]{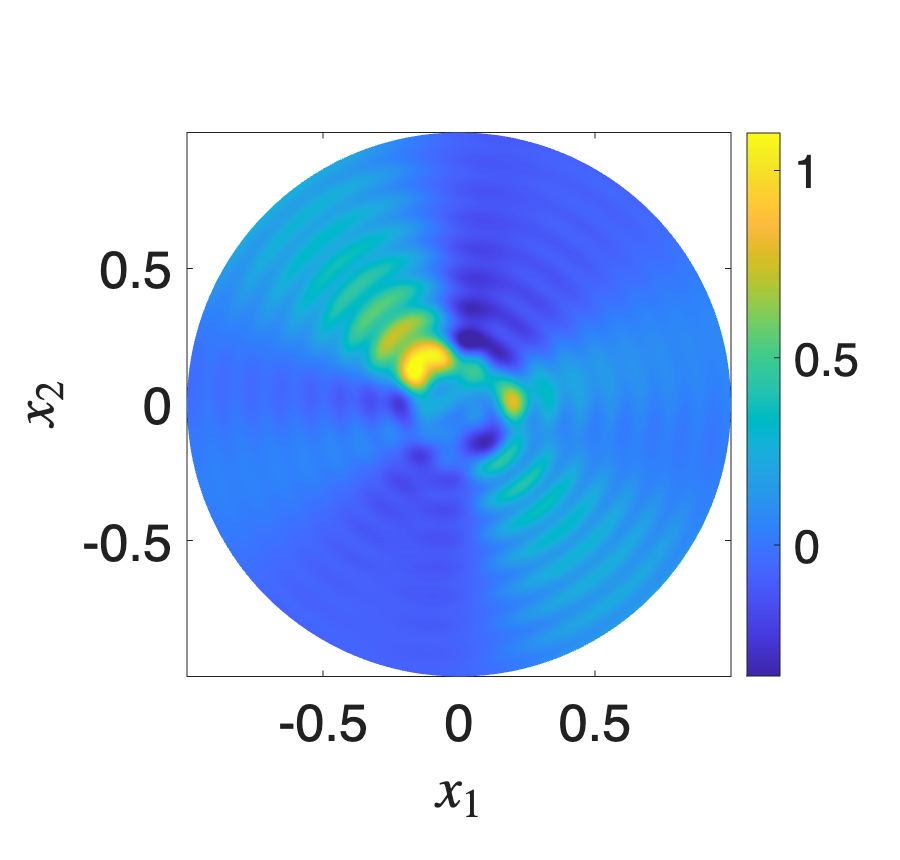}
    \end{subfigure}
    \hspace*{4em}
    \begin{subfigure}[b]{.36\textwidth}
      \centering
      \includegraphics[width=\textwidth]{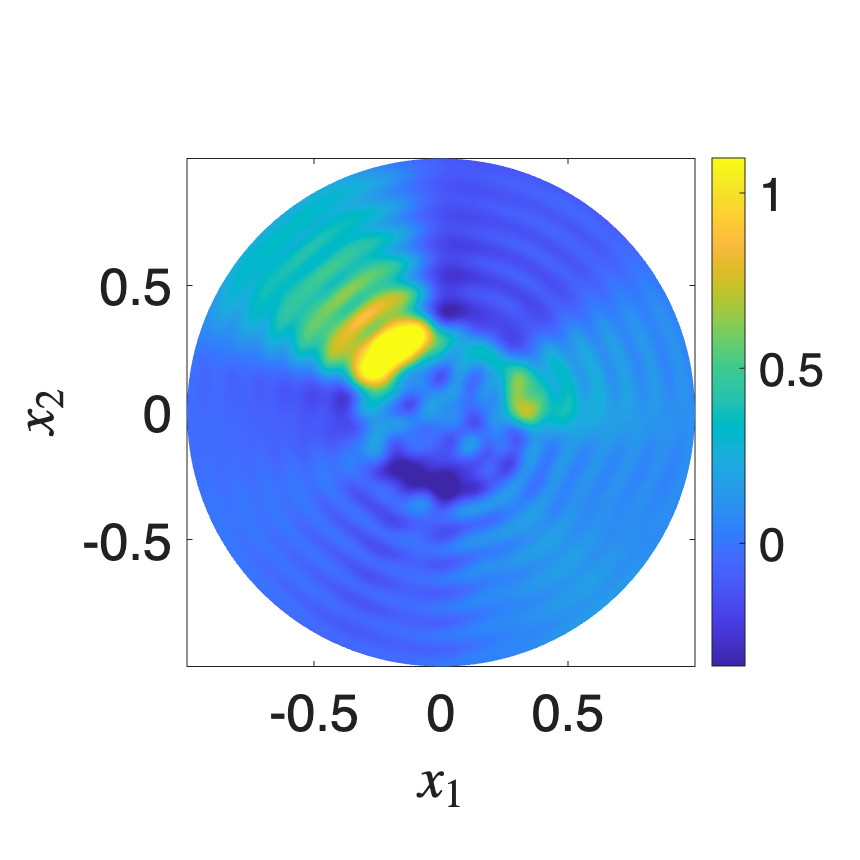}
    \end{subfigure}
    \begin{subfigure}[b]{.36\textwidth}
      \centering
      \includegraphics[width=\textwidth]{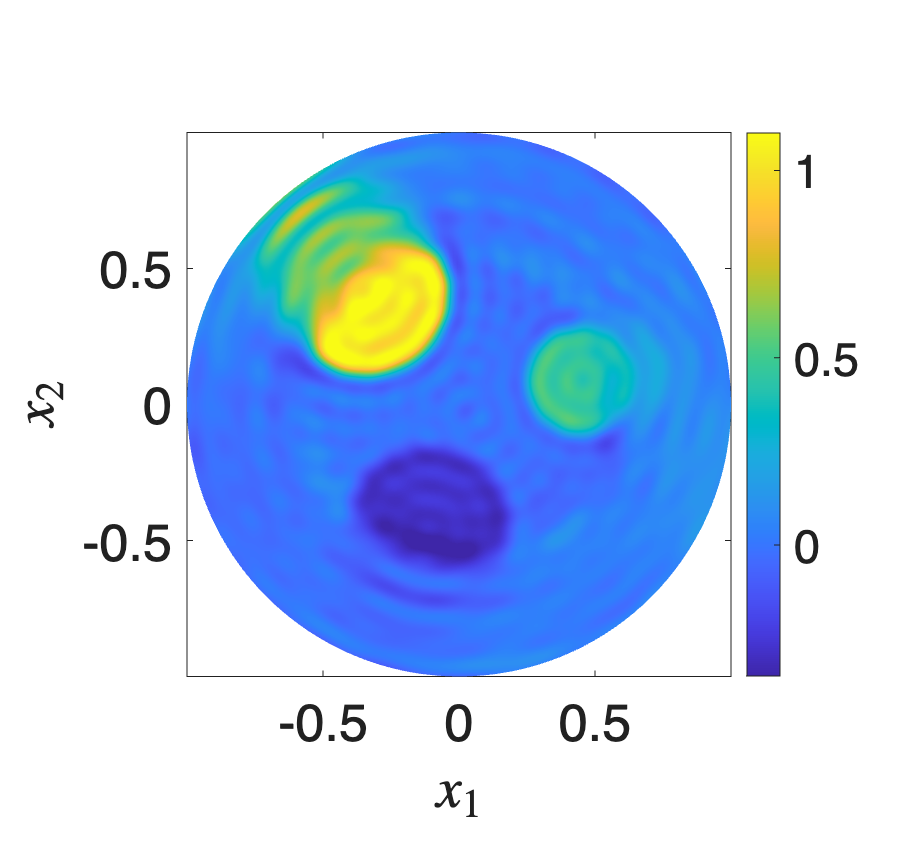}
    \end{subfigure}
    \hspace*{4em}
    \begin{subfigure}[b]{.36\textwidth}
      \centering
      \includegraphics[width=\textwidth]{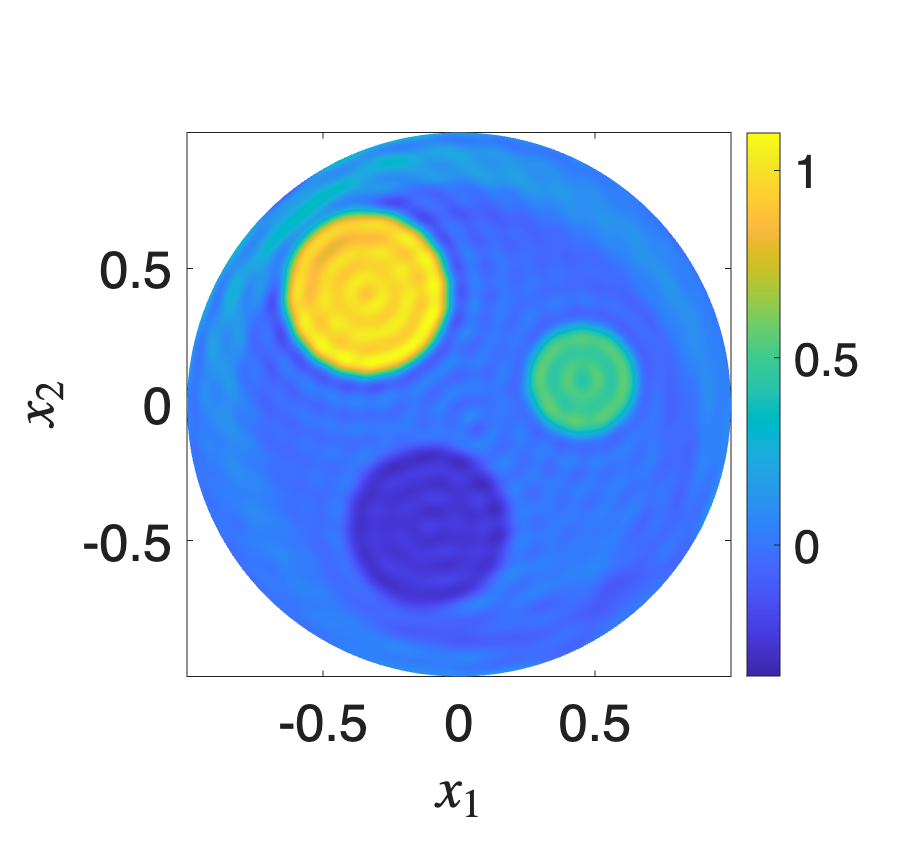}
    \end{subfigure}
    \begin{subfigure}[b]{.36\textwidth}
      \centering
      \includegraphics[width=\textwidth]{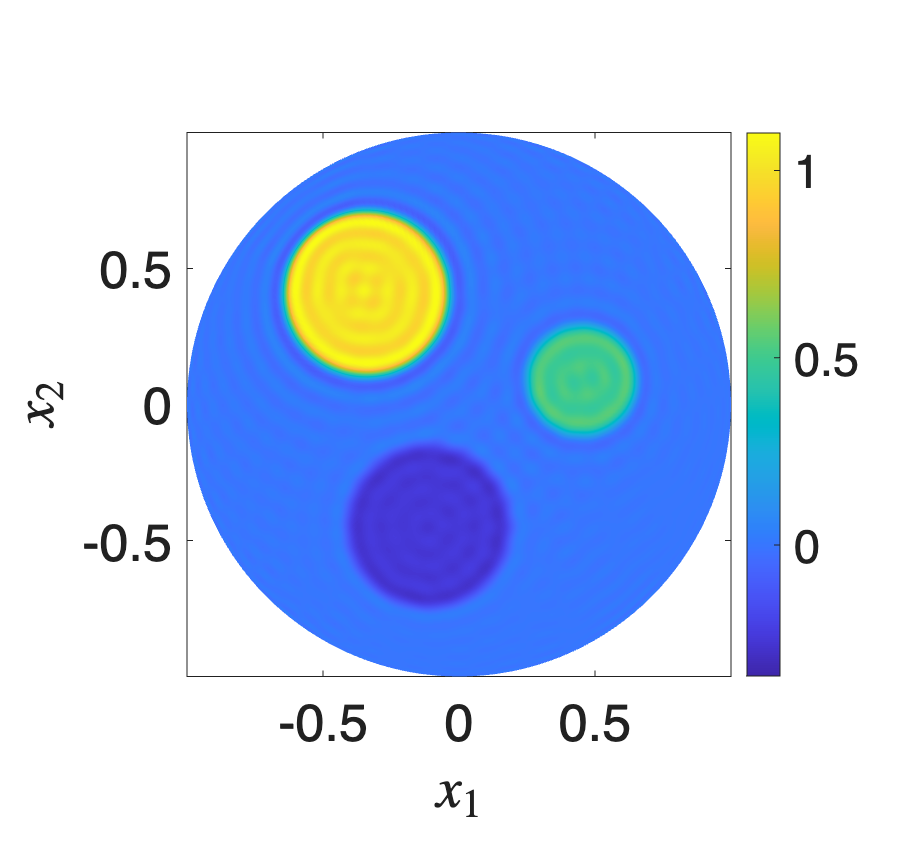}
    \end{subfigure}
    \hspace*{4em}
    \begin{subfigure}[b]{.36\textwidth}
      \centering
      \includegraphics[width=\textwidth]{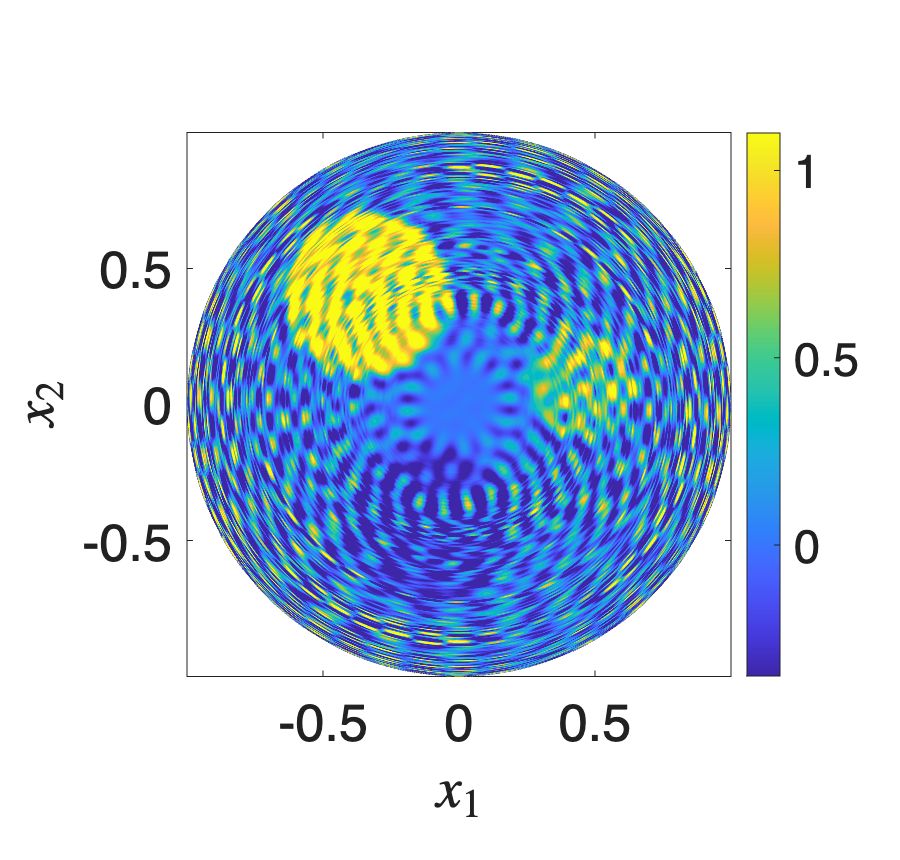}
    \end{subfigure}
    \caption{Example~\ref{exa:three_discs}. Reconstructed contrast from the exact Born far field data for different truncation indices.
    Top left: $N=5$ (too small).
    Top right: $N=10$ (too small).
    Middle left: $N=15$ (too small).
    Middle right: $N=20$ (slightly too small).
    Bottom left: $N=29$ (optimal).
    Bottom right: $N=31$ (too large).}
    \label{fig:reconstructions_disc_N}
\end{figure}
We first study the dependence of the reconstruction quality in the noise-free case on the truncation index $N \in \{1,\dots,35\}$ without employing truncated SVD for further regularization.
Selected reconstructions are shown in Figure~\ref{fig:reconstructions_disc_N}, and the corresponding relative $L^2$ reconstruction error over the ROI
is presented in Figure~\ref{fig:reconstructions_disc_error}~(left).
\begin{figure}[t]
    \centering
    \begin{subfigure}[b]{.45\textwidth}
      \centering
      \includegraphics[width=\textwidth]{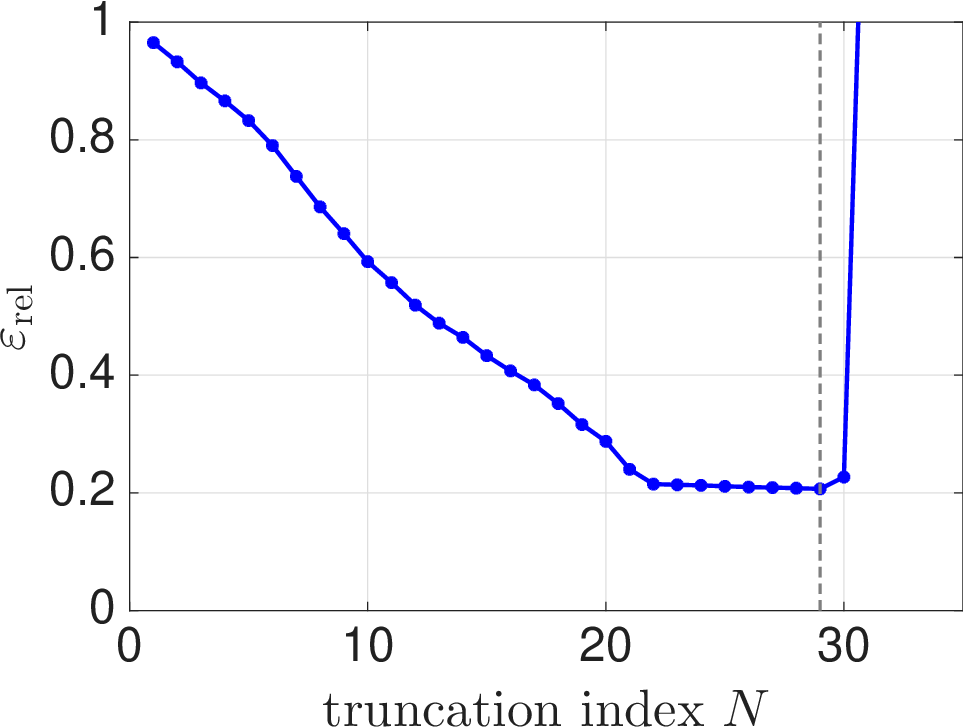}
    \end{subfigure}
    \hfill
    \begin{subfigure}[b]{.45\textwidth}
      \centering
      \includegraphics[width=\textwidth]{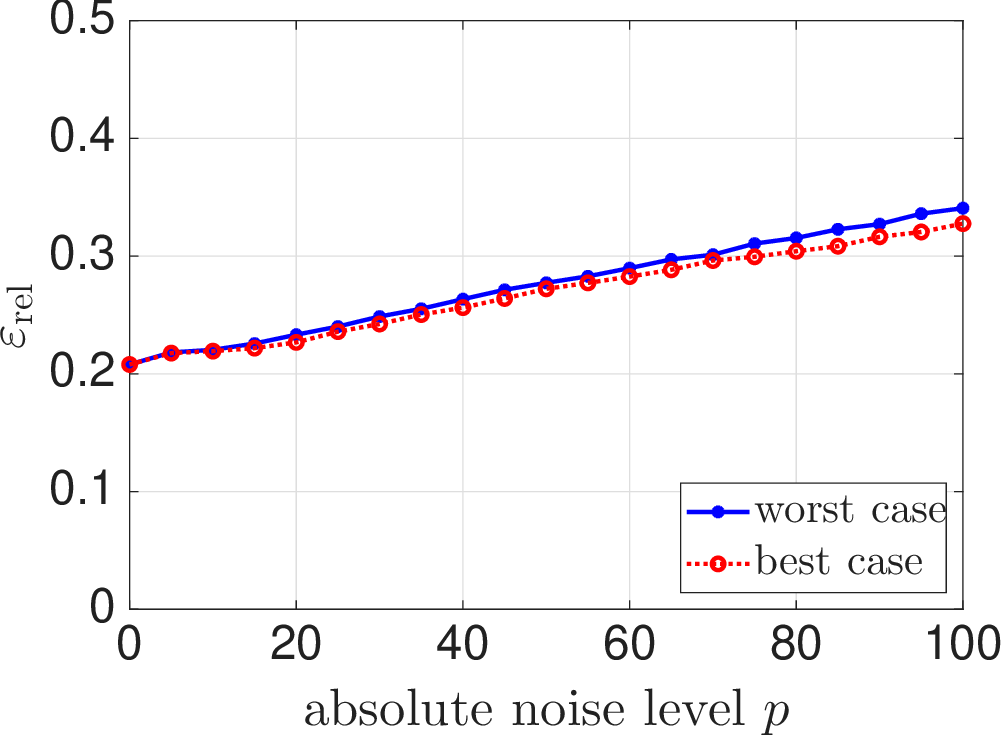}
    \end{subfigure}
    \caption{Example~\ref{exa:three_discs}. 
    Left: Relative $L^2$ reconstruction error $\varepsilon_{\mathrm{rel}}$ 
    as a function of the truncation index $N$ with exact Born far field data. The optimal choice $N=29$ is marked by a vertical line. 
    Right: The best and worst relative $L^2$ reconstruction errors 
    $\varepsilon_{\mathrm{rel}}$ as functions of the absolute noise level $p$ over $20$ runs with different realizations of noise.
    }
    \label{fig:reconstructions_disc_error}
\end{figure}
The error curve illustrates the trade-off as described in Remark~\ref{rem:Choice_N}:
choosing the truncation index $N$ too small leads to loss of relevant information, whereas choosing it too large results in dominance of the orthonormalization error introduced by the Gram--Schmidt process.
Interestingly, the region in which the contrast is reconstructed accurately expands gradually outward from the center when increasing $N$ until the minimal relative reconstruction error is achieved for $N=29\approx\kappa R$.
Figures~\ref{fig:reconstructions_disc_N} and \ref{fig:reconstructions_disc_error} reveal that our method cannot achieve arbitrarily accurate reconstructions since higher angular and radial modes are excluded from the computation by construction; the relative $L^2$ error remains above $20\%$ for all truncation indices.
In particular, discontinuities in the contrast, i.e., the jumps at the boundaries of the discs, cannot be reconstructed exactly.
However, the reconstructions  for $N = 20$ and $N = 29$ in Figure~\ref{fig:reconstructions_disc_N} can be considered visually satisfactory.
According to the reconstruction for $N=31$ in Figure~\ref{fig:reconstructions_disc_N} and the error plot in Figure~\ref{fig:reconstructions_disc_error}~(left), even a small increase in the truncation index $N$ beyond its optimal value leads to poor reconstructions.
Hence, without further regularization, the quality of the reconstruction is strongly influenced by the quality of the {\em a priori} known ROI.

\begin{figure}[t]
    \centering
    \begin{subfigure}[b]{.36\textwidth}
      \centering
      \includegraphics[width=\textwidth]{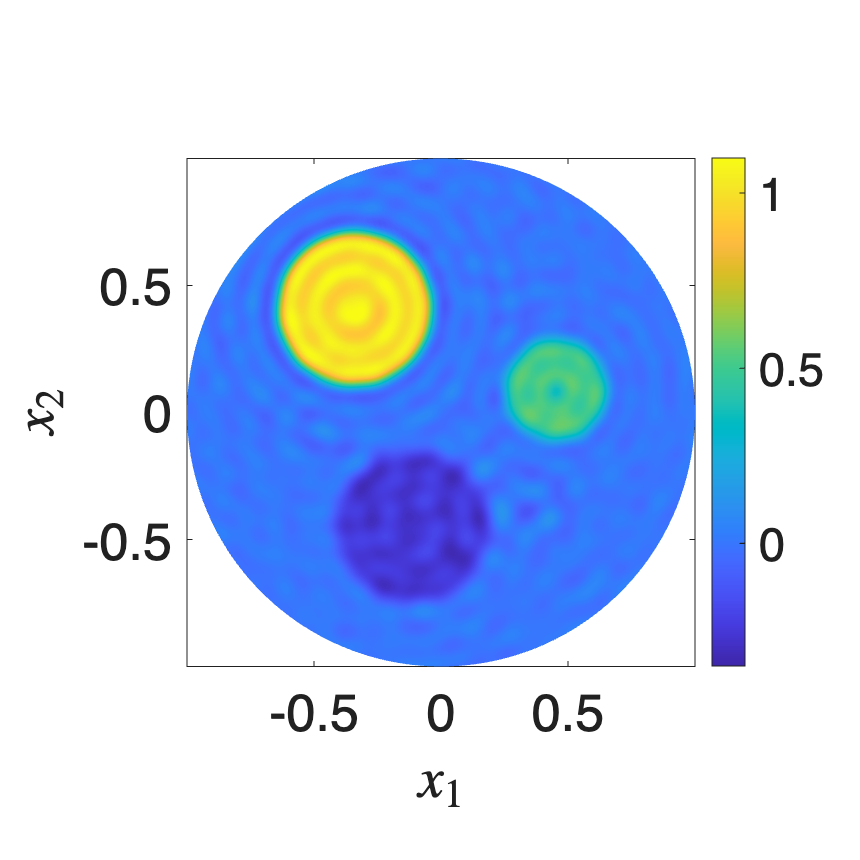}
    \end{subfigure}
    \hspace*{4em}
    \begin{subfigure}[b]{.36\textwidth}
      \centering
      \includegraphics[width=\textwidth]{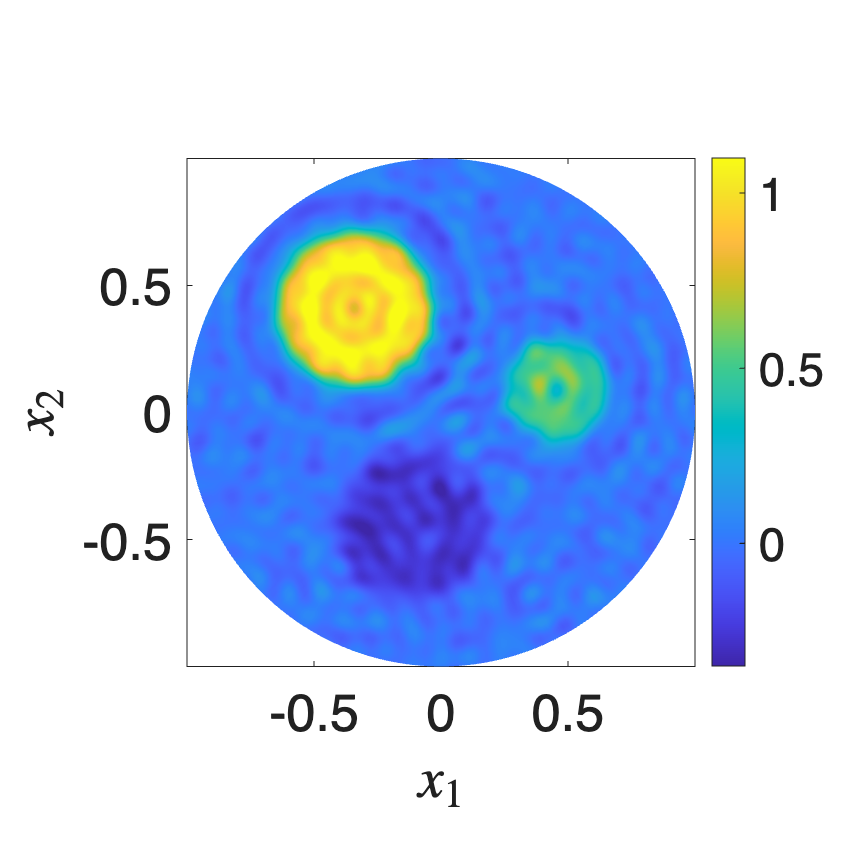}
    \end{subfigure}
    \caption{Example~\ref{exa:three_discs}. Worst reconstructed contrasts in the sense of relative $L^2$ error from noisy Born far field data over $20$ runs at the noise levels $p=20$ (left) and $p=80$ (right).}
    \label{fig:reconstructions_disc_noise}
\end{figure}
Next, we fix the truncation index $N=30$ and study the quality of our reconstructions when the observed far field data are contaminated by additional noise.
To this end, we add to each element of the exact data matrix in \eqref{eq:observation} complex noise whose real and imaginary parts are independently drawn from a zero-mean uniform distribution, with the standard deviation scaled {\em a posteriori} so that the Frobenius norm of the added noise matrix is $p\in\{0,5,\ldots,95,100\}$ per cent of the Frobenius norm of the exact data matrix. In what follows, we refer to this noise model by simply saying that the data contain $p$\% of noise. We use the truncated SVD together with the Morozov discrepancy principle with respect to the Euclidean norm as an additional regularization strategy, with the target vector carrying the noisy truncated coefficients of the Born far field operator (cf.~\eqref{eq:block_system} and \eqref{eq:meas_a}) and the employed noise level scaled appropriately to account for the amount of noise no longer present in the data after the truncation. We generate $20$ independent noise realizations for each $p$ and plot the relative $L^2$ errors of the resulting best and worst reconstructions in Figure~\ref{fig:reconstructions_disc_error}~(right).
The related worst-case reconstructions for $p=20$ and $p=80$ are presented in Figure~\ref{fig:reconstructions_disc_noise}. In the shown worst cases, $45\%$ and $24\%$ of the singular components are taken into account for $p = 20$ and $p=80$, respectively. Interestingly, our method turns out to be very robust to this form of additive noise since it is distributed across both high and low modes in the data, and thus a large fraction of the noise is filtered out by the introduction of the truncation index $N$. In the next example, we will see that this does not apply when the model discrepancy originates from multiple scattering effects included in full far field data.
\hfill$\lozenge$
\end{example}

\begin{example}
\label{exa:three_discs_smooth}
Next we study a smooth contrast of a similar geometric structure as in the previous example, with the aim to compare reconstructions obtained using Born far field data and full far field data as the input for our reconstruction algorithm. 
We examine how reconstructions from full far field data deteriorate with increasing multiple scattering effects, i.e., as the Born far field data become an increasingly inaccurate approximation of the full far field data observed in practice.
This is achieved by keeping the contrast $q$ fixed as shown in Figure \ref{fig:exact_contrasts}~(right) while gradually increasing the wavenumber from $\kappa=1$ to $\kappa=56$, which also leads to an increase in the truncation index from $N=1$ to $N=56$. When considering full far field data, we accompany the data truncation with truncated SVD with the noise level for the Morozov principle chosen (unrealistically) to be the Euclidean norm of the discrepancy in the data vector in \eqref{eq:block_system} between Born and full far field data; see~\eqref{eq:meas_a} and the comment succeeding it.
For evaluating both Born and full far field data in $2L=250$ equiangular observation and illumination directions, we use the fast Lippmann--Schwinger solver proposed by Vainikko~\cite{Vai97}.
For expanding the contrast, we choose $N_r=N_\varphi=310$. 

The corresponding relative $L^2$ reconstruction errors along with the relative Frobenius-based data error (cf.~\eqref{eq:observation}) induced by approximating the Born far field data with the full far field data are shown in Figure \ref{fig:reconstructions_discs_error_kappa}.
The reconstructed contrasts for $\kappa=11$ and for $\kappa=46$ are illustrated in Figure~\ref{fig:reconstructions_discs_kappa}. According to Figures~\ref{fig:reconstructions_discs_error_kappa} and \ref{fig:reconstructions_discs_kappa}, the best reconstruction using full far field data is obtained at $\kappa = 11$, with practically no visual difference in quality to the corresponding reconstruction based on Born far field data.
Although multiple scattering effects dominate at $\kappa = 46$, amounting to about 80\% of the magnitude of the linearized component, the reconstruction from full far field data still provides a clear picture of the positions and sizes of the three components of the scatterer, with the exterior shapes of two of the components still recognizable. However, the reconstruction does not capture the dynamic range of the target contrast, and it includes holes inside the scattering components.

\begin{figure}[t]
    \centering
    \begin{subfigure}[b]{.45\textwidth}
      \centering
      \includegraphics[width=\textwidth]{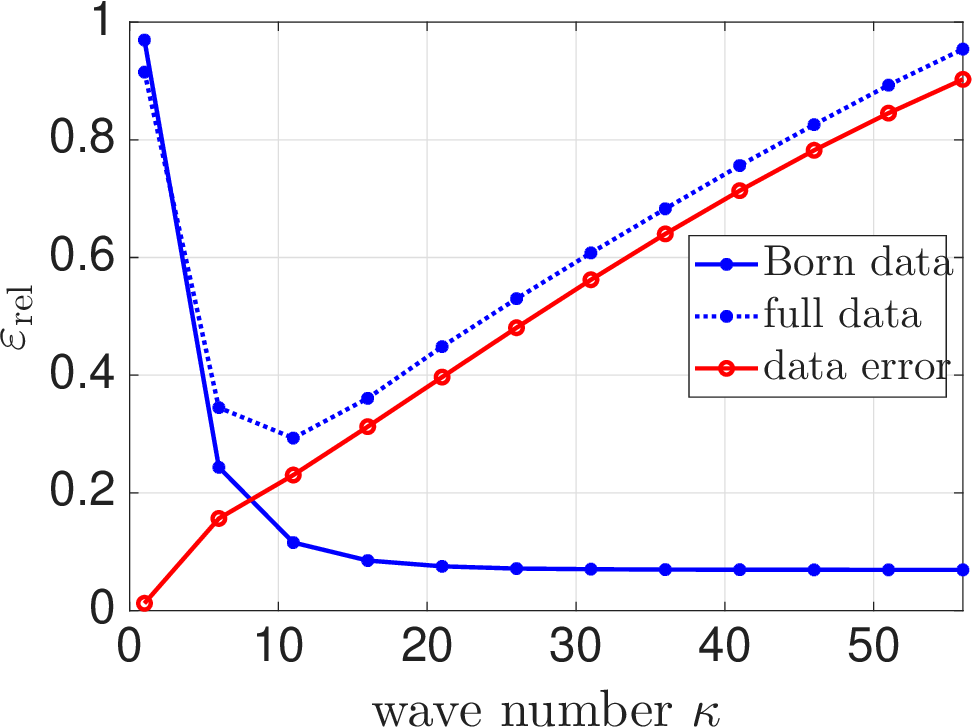}
    \end{subfigure}
    \caption{Example~\ref{exa:three_discs_smooth}. Relative $L^2$ reconstruction error $\varepsilon_{\mathrm{rel}}$ 
    as a function of the wave number $\kappa$ with the exact Born far field data and the exact full far field data as the inputs for the proposed method. The relative Frobenius-based approximation error  induced by replacing the Born far field data with the full far field data is also shown for reference (cf.~\eqref{eq:meas_a}).}
    \label{fig:reconstructions_discs_error_kappa}
\end{figure}
\begin{figure}[t]
    \centering
    \begin{subfigure}[b]{.36\textwidth}
      \centering
      \includegraphics[width=\textwidth]{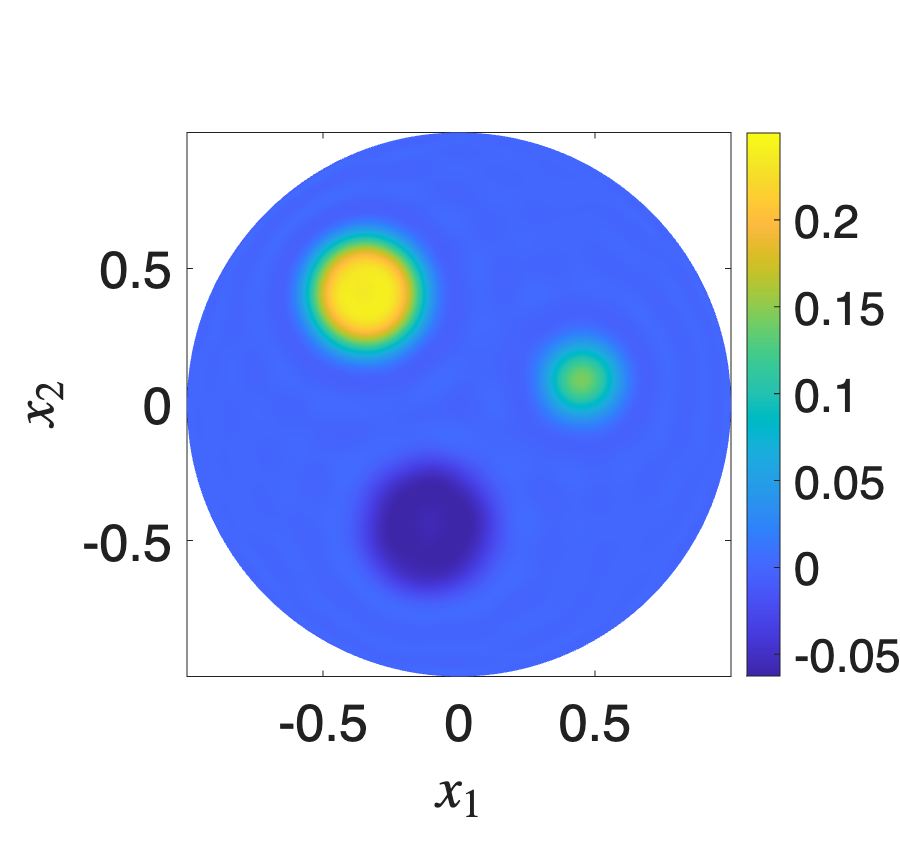}
    \end{subfigure}
    \hspace*{4em}
    \begin{subfigure}[b]{.36\textwidth}
      \centering
      \includegraphics[width=\textwidth]{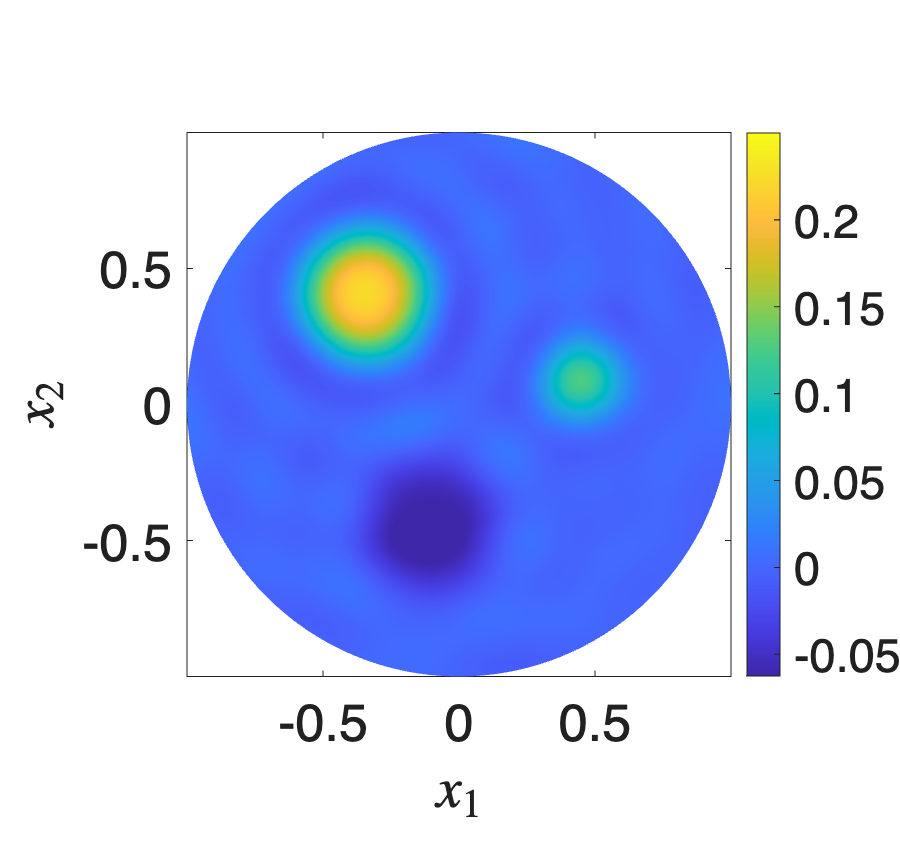}
    \end{subfigure}
    \begin{subfigure}[b]{.36\textwidth}
      \centering
      \includegraphics[width=\textwidth]{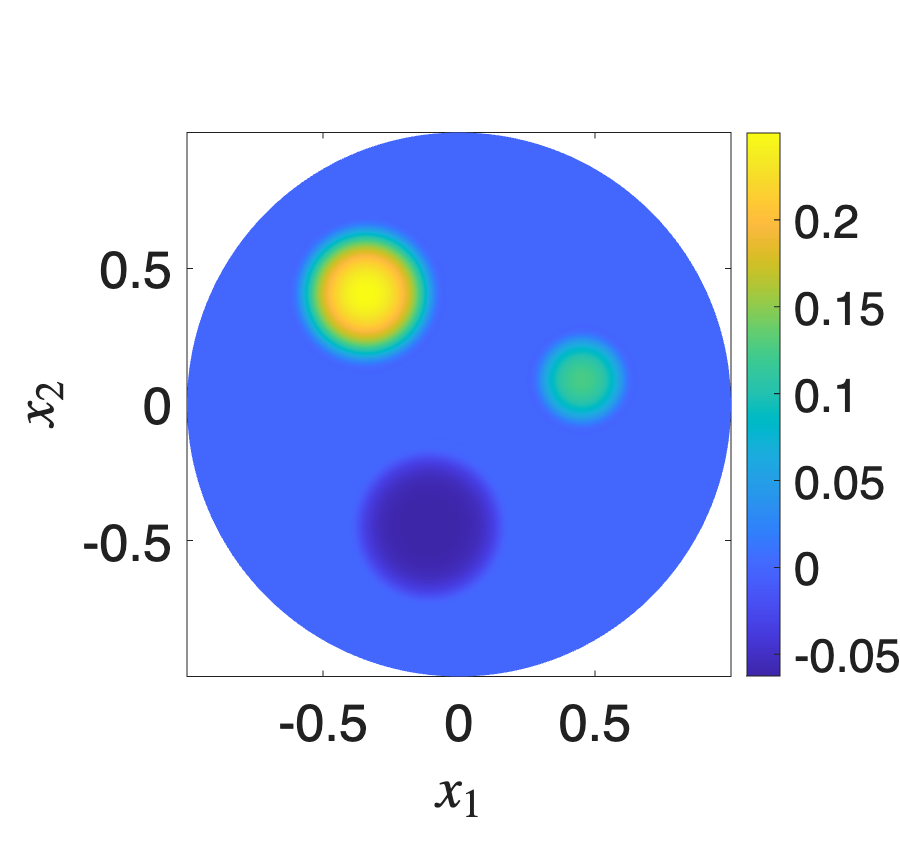}
    \end{subfigure}
    \hspace*{4em}
    \begin{subfigure}[b]{.36\textwidth}
      \centering
      \includegraphics[width=\textwidth]{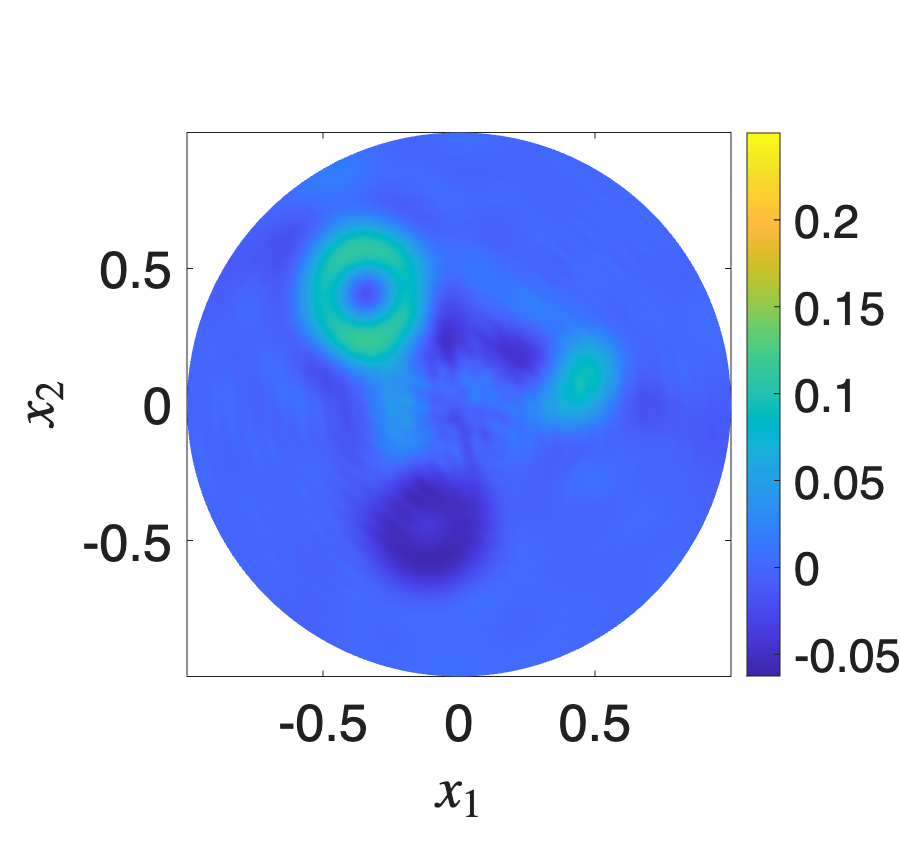}
    \end{subfigure}
    \caption{Example~\ref{exa:three_discs_smooth}. Reconstructed contrast from the exact Born far field data (left) and the exact full far field data (right). 
    Top: $\kappa=11$ (the best reconstruction from the full far field data, cf.~Figure~\ref{fig:reconstructions_discs_error_kappa}).
    Bottom: $\kappa=46$ (multiple scattering effects dominate linearized data).}
    \label{fig:reconstructions_discs_kappa}
\end{figure}

For $\kappa < 11$, accurate reconstructions cannot be obtained even from Born far field data, as $N$ is chosen so small that the amount of data is insufficient for reconstructing high spatial frequencies in the contrast $q$. For higher values of $\kappa$, enough information is available, with the relative error curve for Born far field data plateauing at around $7\%$ in Figure~\ref{fig:reconstructions_discs_kappa}.

According to~\cite{GriSch24}, the expansion coefficients of full far field data have the same essential support as those of Born far field data (cf.~\eqref{eq:truncation}).
Hence, it can be argued that the truncation of the infinite-dimensional systems at index $N$ does not remove retrievable information, which partially explains why structural information about the contrast geometry can be reconstructed reasonably even when multiple scattering effects are dominant. On the other hand, unlike in the case of artificially generated noise in the previous example, no noise filtering of the model error can be expected via truncation of the data matrix. 
\hfill$\lozenge$
\end{example}

\begin{example}
\label{ex:ship}
In this final example we compare our method with the low-rank method proposed in \cite{ZhoAudMenZha26} and MATLAB's built-in NUFFT as described in~\cite{DutRoh93}.

The method presented in~\cite{ZhoAudMenZha26} follows a similar strategy to ours:
based on the Born approximation, a simple invertible system matrix is derived by expanding both the contrast and the observed far field data in terms of suitably orthonormal systems for $L^2(B_1(\bfzero))$, namely the prolate spheroidal wave functions. Obtaining a diagonal system matrix, instead of a block-wise triangular one as in our case, comes at the cost of additional modeling error due to an 
unnatural sampling structure for the far field data.

Regarding the NUFFT, we refer to the state of the art implementation fiNUFFT by the Flatiron Institute (cf.~\cite{Bar21,BarMagKli19}), which outperforms MATLAB's built-in option in terms of computational time for nonuniform sampling points.
Since such efficiency aspect is not relevant for our numerical examples, we nevertheless employed MATLAB’s built-in implementation in the presented examples.

To generate the data, we use the Matlab file ``ship2D.m'' from the toolbox IPscatt (see \cite{BruKazLec19}), which was also used for the numerical tests in \cite[Fig.~12--14]{ZhoAudMenZha26}.
The real and imaginary parts of the exact contrast are shown in Figure~\ref{fig:ship_exact}, where we assume the prior information that the support of the contrast is contained in $B_1(\bfzero)$.
We simulate full far field data using the fast Lippmann--Schwinger solver~\cite{Vai97} and $2L=200$,
and subsequently add $20\%$ of noise to the data (as described in Example~\ref{exa:three_discs}).
We run the three methods for the wave numbers $\kappa=30$ and $\kappa=60$. This leads to the truncation indices $N=30$ and $N=60$ for our method, and in addition, we employ truncated SVD with the Morozov discrepancy principle as in Example~\ref{exa:three_discs_smooth}, assuming unrealistically the knowledge of the truncated Born far field data for choosing the spectral cut-off. 
For our method and the low-rank method, we set $N_r=N_\varphi=400$ for expanding the contrast.
For NUFFT, we use a cartesian grid enclosing the ROI and consisting of $100$ equidistant nodes in both dimensions.
As the spectral cut-off in the low rank method, we use $90\%$ of the prolate eigenvalue of largest magnitude as suggested in \cite{ZhoAudMenZha26}.

\begin{figure}[t]
    \centering
    \begin{subfigure}[b]{.36\textwidth}
      \centering
      \includegraphics[width=\textwidth]{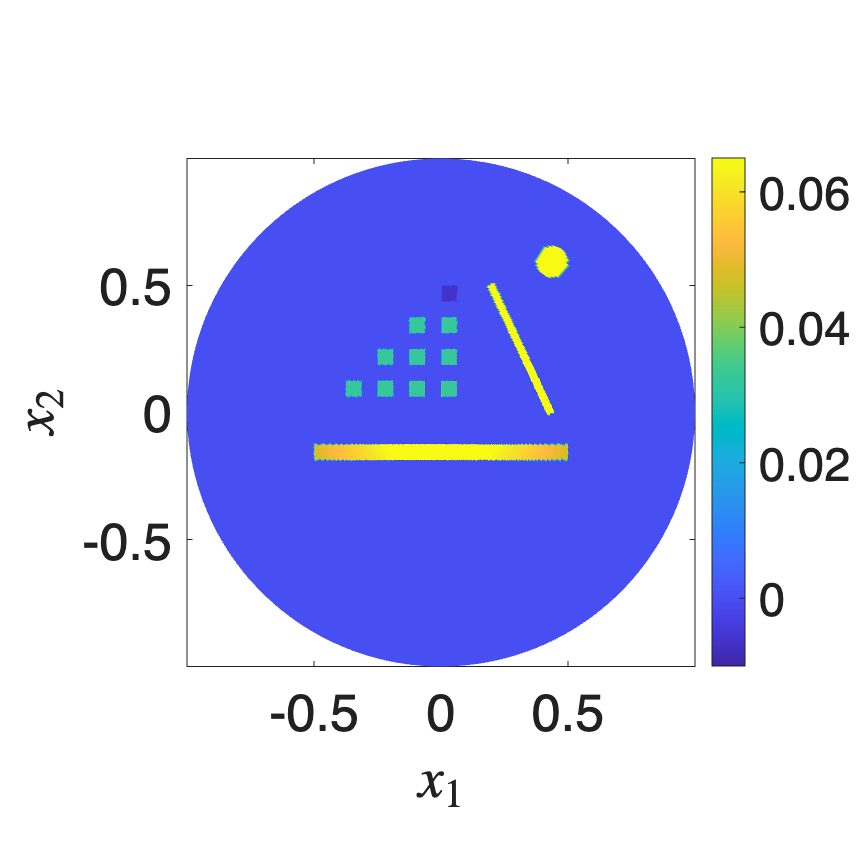}
    \end{subfigure}
    \hspace*{4em}
    \begin{subfigure}[b]{.36\textwidth}
      \centering
      \includegraphics[width=\textwidth]{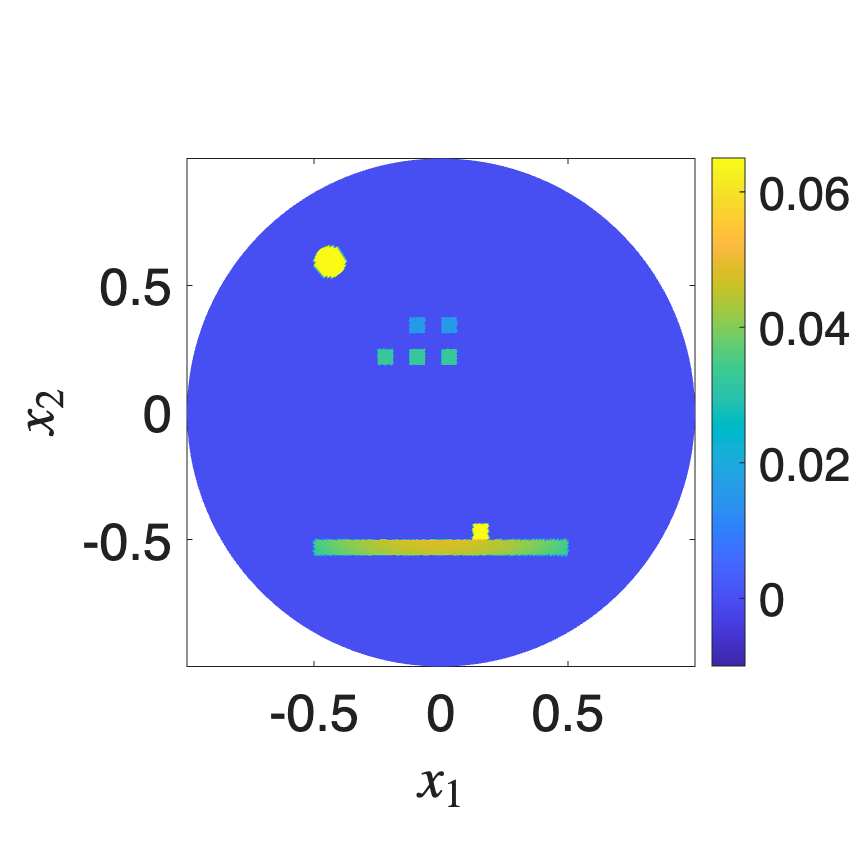}
    \end{subfigure}
    \caption{Example~\ref{ex:ship}. Real part (left) and imaginary part (right) of the exact contrast function.} 
    \label{fig:ship_exact}
  \end{figure}

  \begin{figure}[t]
    \centering
    \begin{subfigure}[b]{.36\textwidth}
      \centering
      \includegraphics[width=\textwidth]{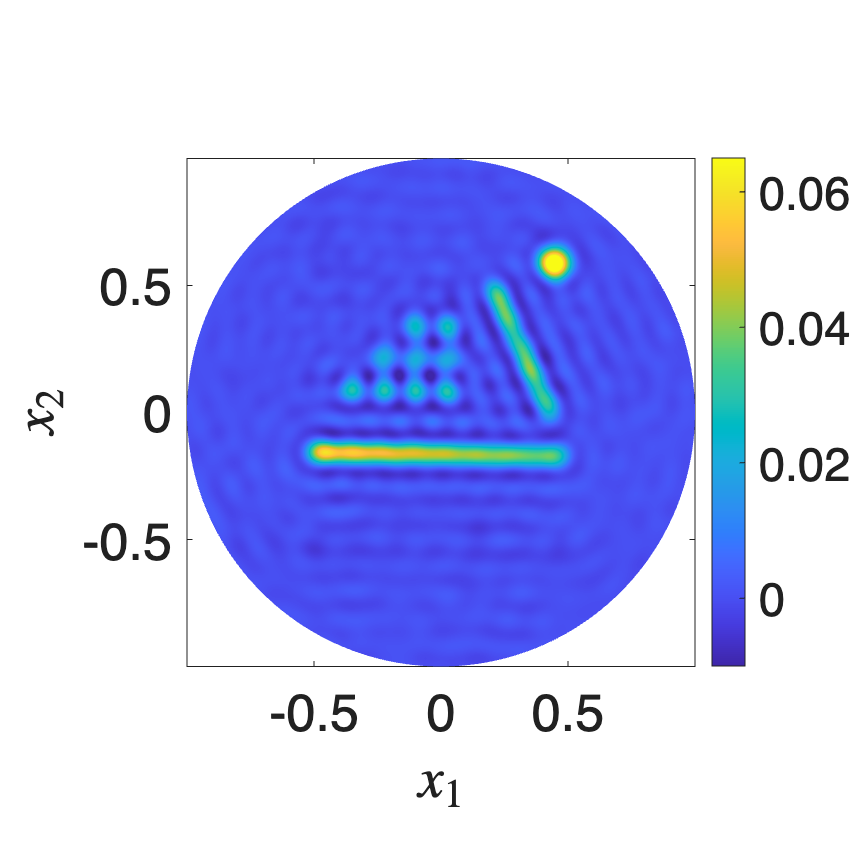}
    \end{subfigure}
    \hspace*{4em}
    \begin{subfigure}[b]{.36\textwidth}
      \centering
      \includegraphics[width=\textwidth]{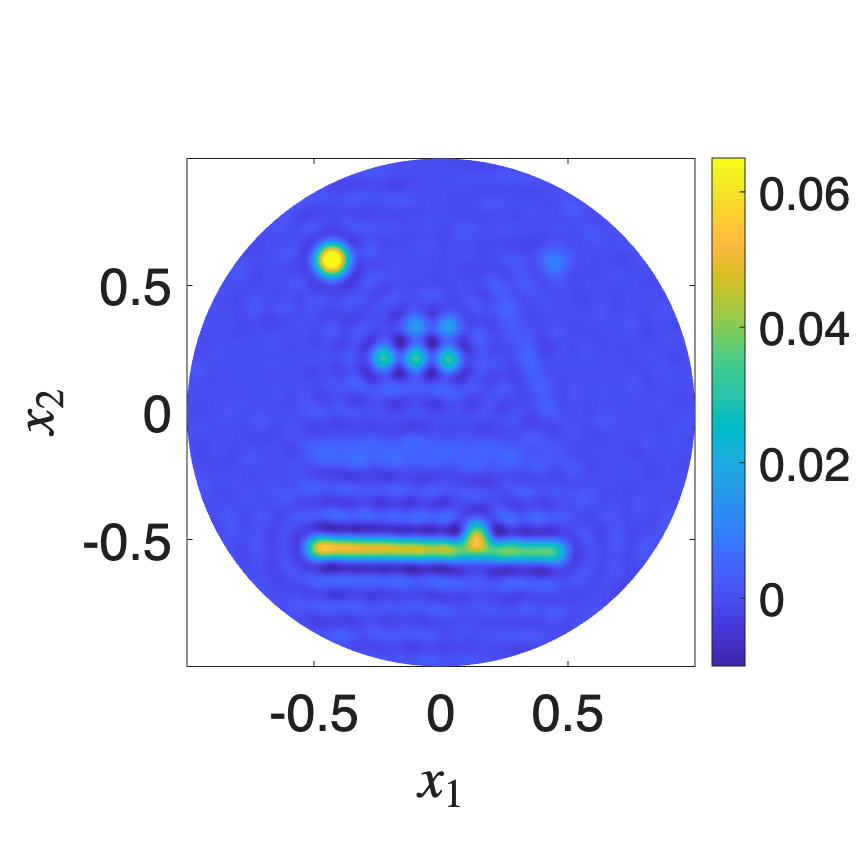}
    \end{subfigure}
    \begin{subfigure}[b]{.36\textwidth}
      \centering
      \includegraphics[width=\textwidth]{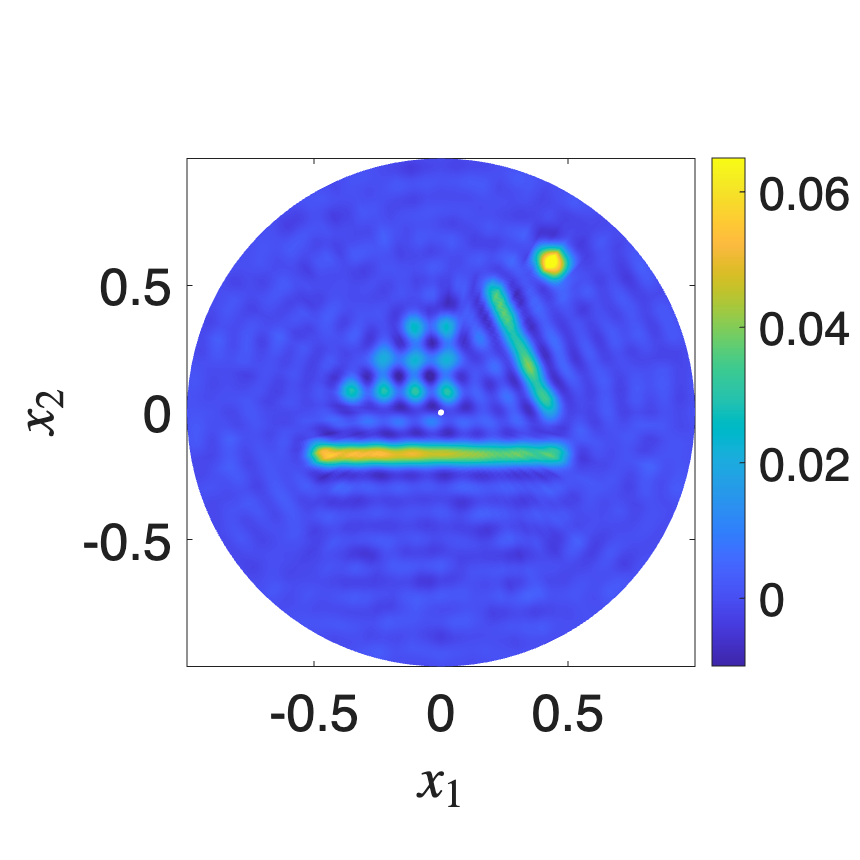}
    \end{subfigure}
    \hspace*{4em}
    \begin{subfigure}[b]{.36\textwidth}
      \centering
      \includegraphics[width=\textwidth]{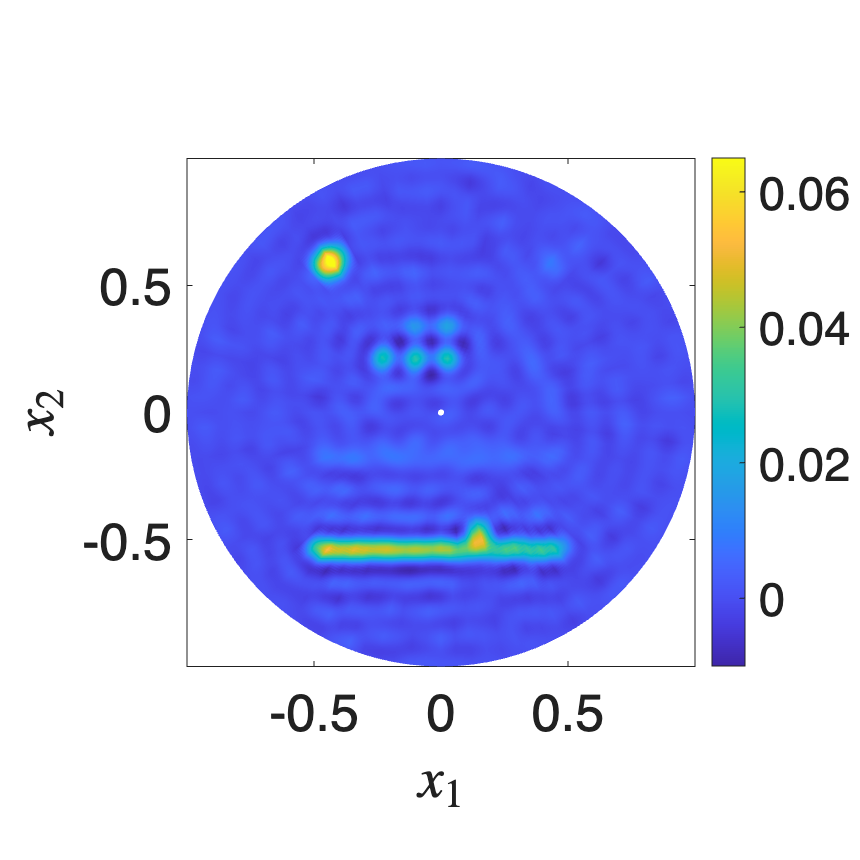}
    \end{subfigure}
    \begin{subfigure}[b]{.36\textwidth}
      \centering
      \includegraphics[width=\textwidth]{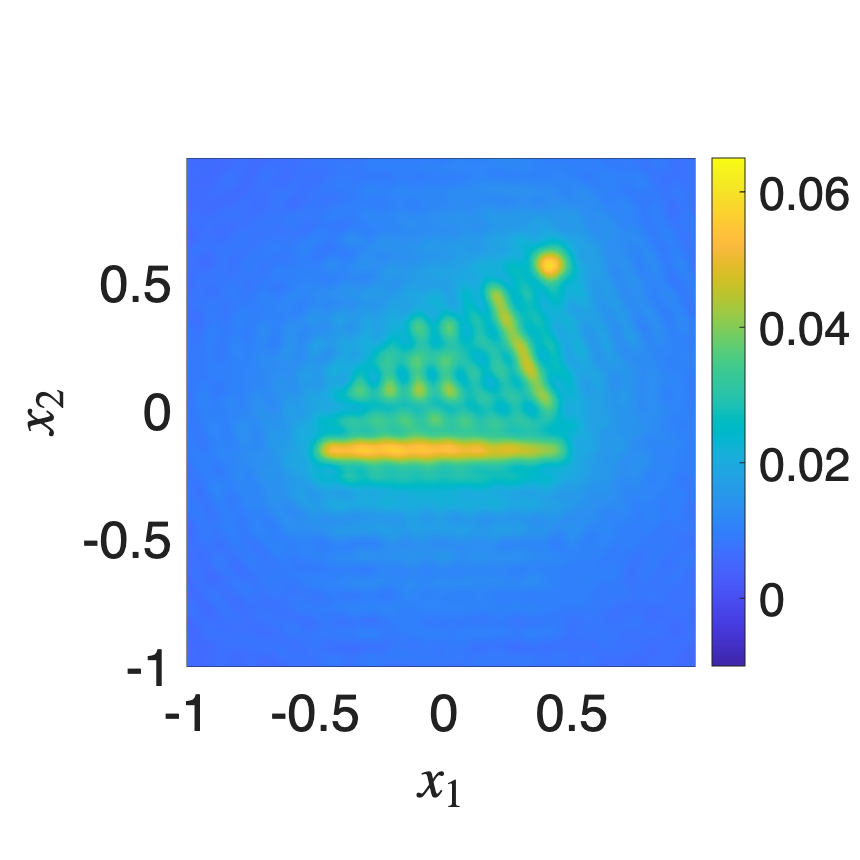}
    \end{subfigure}
    \hspace*{4em}
    \begin{subfigure}[b]{.36\textwidth}
      \centering
      \includegraphics[width=\textwidth]{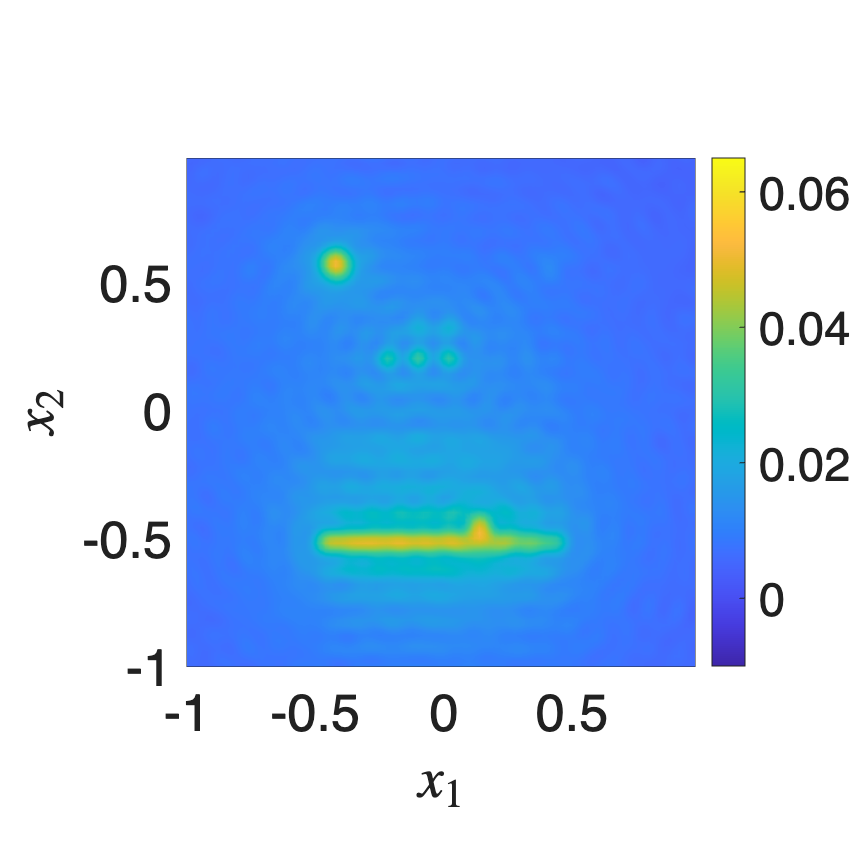}
    \end{subfigure}
    \caption{Example~\ref{ex:ship}. Real part (left) and imaginary part (right) of reconstructed contrast function for wave number $\kappa=30$ from full far field data with $20\%$ of additive noise. Top: our proposed method. Middle: the low-rank method~\cite{ZhoAudMenZha26}. Bottom: MATLAB's built-in NUFFT.} 
    \label{fig:ship_reconstructed_k30}
  \end{figure}
 \begin{figure}[t]
    \centering
    \begin{subfigure}[b]{.36\textwidth}
      \centering
      \includegraphics[width=\textwidth]{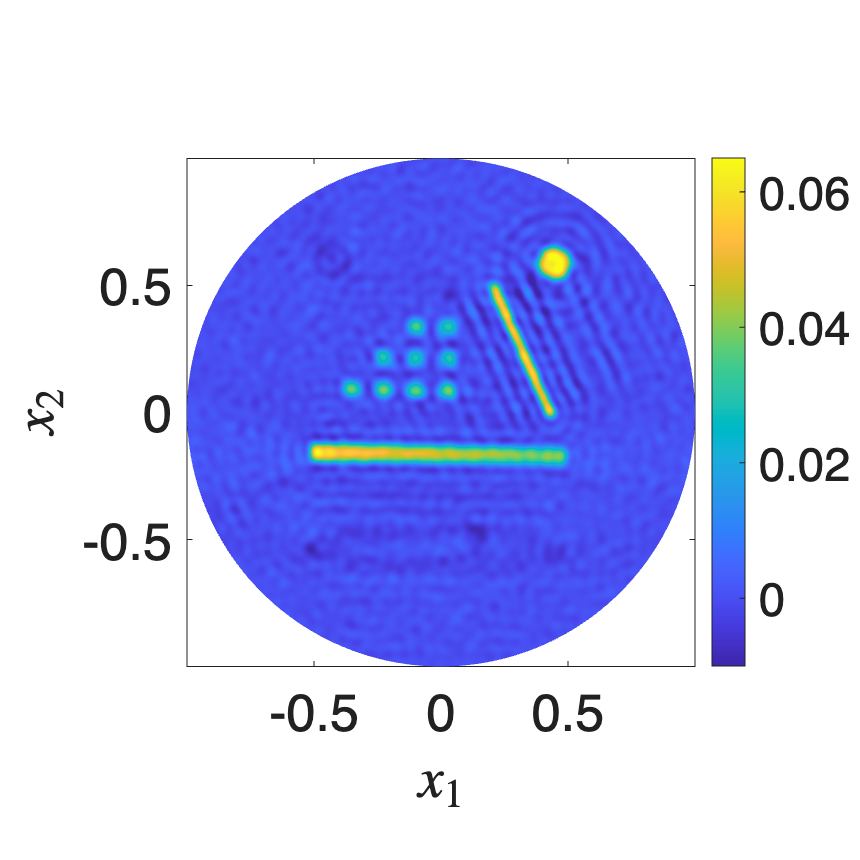}
    \end{subfigure}
    \hspace*{4em}
    \begin{subfigure}[b]{.36\textwidth}
      \centering
      \includegraphics[width=\textwidth]{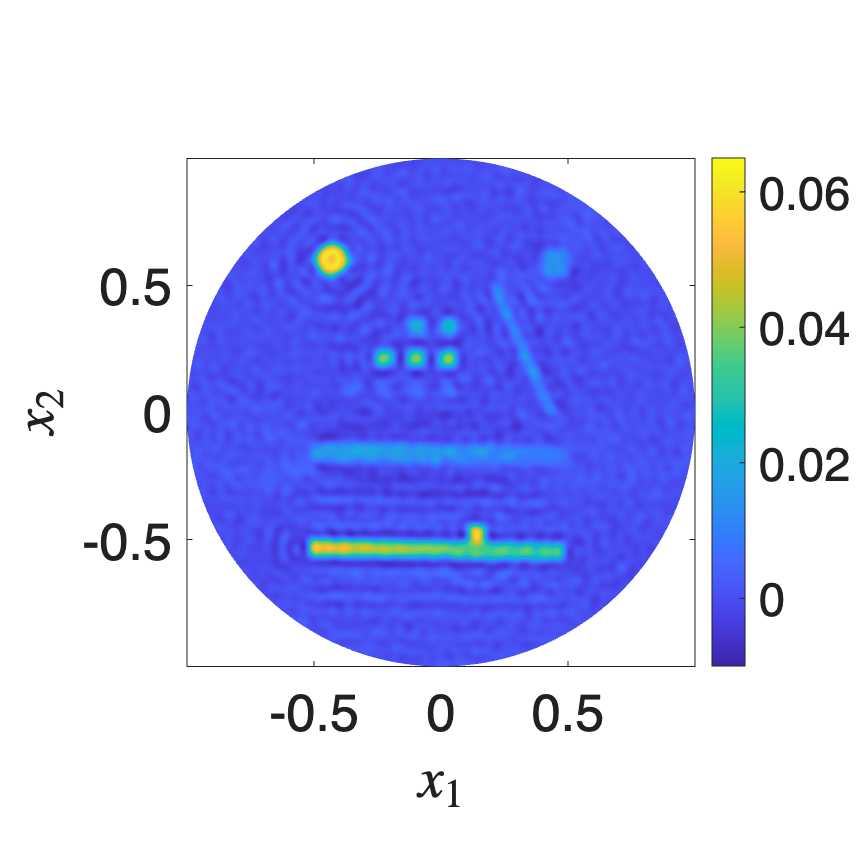}
    \end{subfigure}
    \begin{subfigure}[b]{.36\textwidth}
      \centering
      \includegraphics[width=\textwidth]{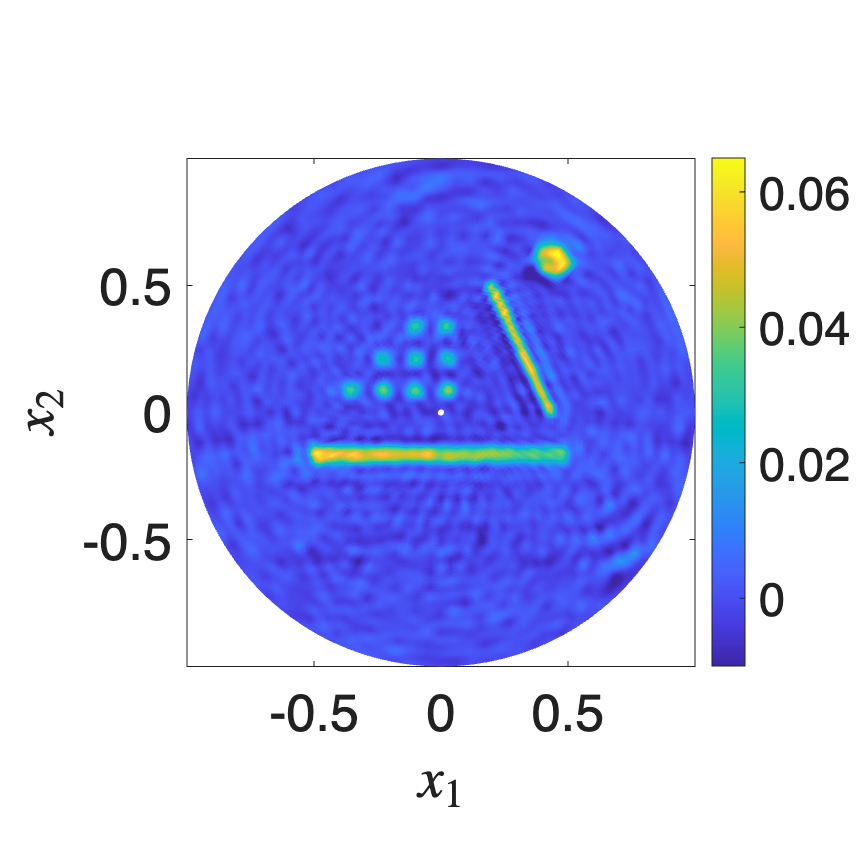}
    \end{subfigure}
    \hspace*{4em}
    \begin{subfigure}[b]{.36\textwidth}
      \centering
      \includegraphics[width=\textwidth]{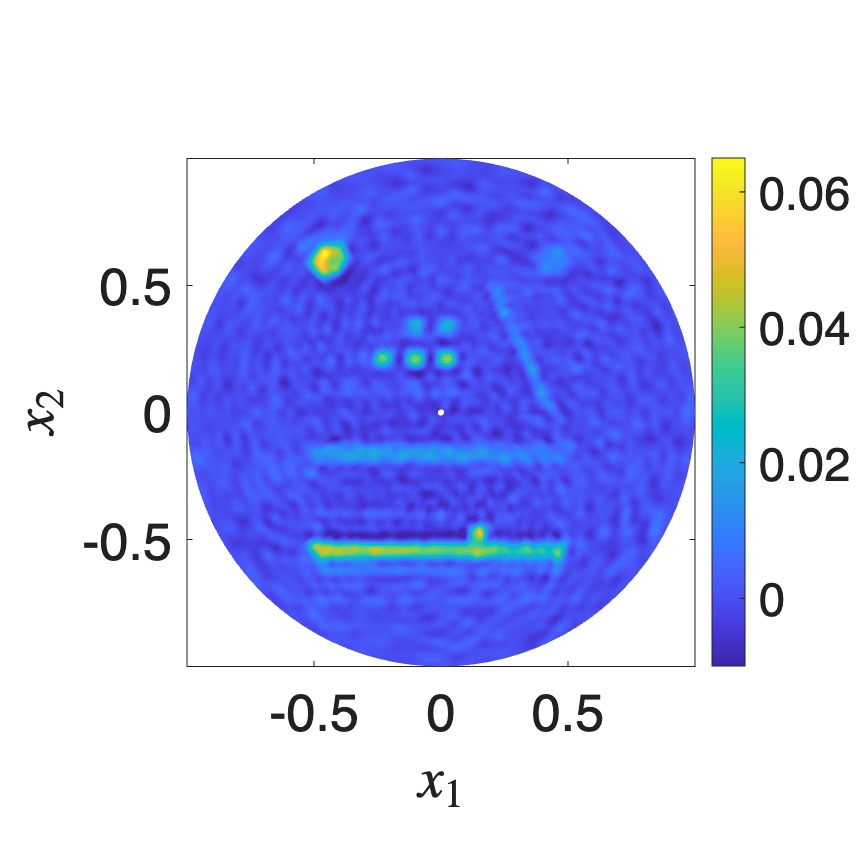}
    \end{subfigure}
    \begin{subfigure}[b]{.36\textwidth}
      \centering
      \includegraphics[width=\textwidth]{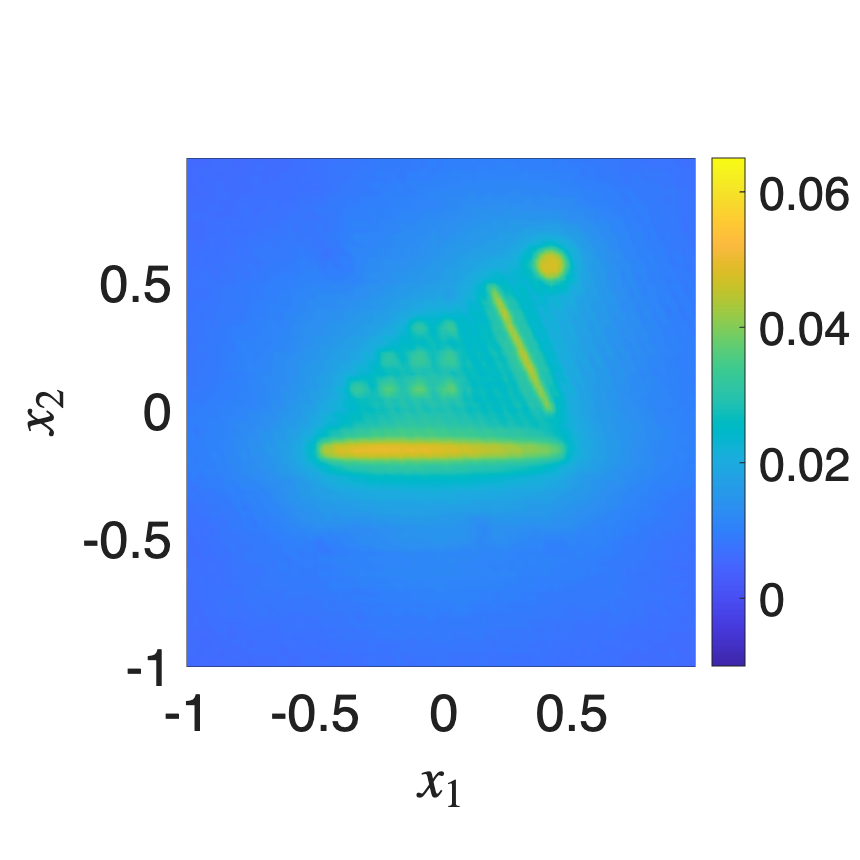}
    \end{subfigure}
    \hspace*{4em}
    \begin{subfigure}[b]{.36\textwidth}
      \centering
      \includegraphics[width=\textwidth]{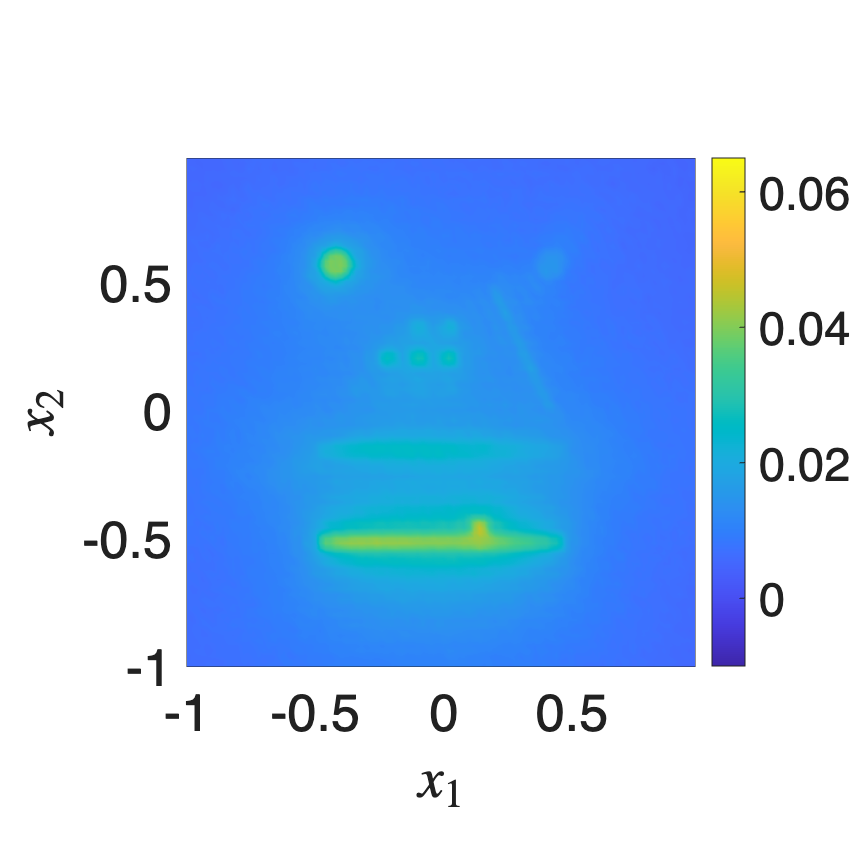}
    \end{subfigure}
    \caption{Example~\ref{ex:ship}. Real part (left) and imaginary part (right) of reconstructed contrast function for wave number $\kappa=60$ from full far field data with $20\%$ additive noise. Top: our proposed method. Middle: the low-rank method~\cite{ZhoAudMenZha26}. Bottom: MATLAB's built-in NUFFT.
    } 
    \label{fig:ship_reconstructed_k60}
  \end{figure}
  The reconstructions for $\kappa=30$ are shown in Figure~\ref{fig:ship_reconstructed_k30} and those for $\kappa=60$ in Figure~\ref{fig:ship_reconstructed_k60}.
Our proposed method and the low-rank method from~\cite{ZhoAudMenZha26} produce reconstructions of comparable quality, whereas NUFFT yields a slightly blurred reconstructions of the contrast. Based on this single example, the performance of our method seems to be on par with other direct reconstruction algorithms that are based on the Born approximation, but drawing more precise conclusions is not possible without more detailed testing.
\hfill$\lozenge$
\end{example}

\section*{Conclusions}
\label{sec:Conclusions}
Following the ideas in \cite{AutGarHirHvy24,GarHyv24} for EIT, we introduced a direct reconstruction method for inverse medium Born scattering for the Helmholtz equation in two spatial dimensions. Choosing appropriate basis for representing the far field operator and the contrast function, the proposed method reduces the inverse problem to solving decoupled triangular systems that correspond to different angular frequencies in the contrast. The bases for representing the far field operator and the angular behavior of the contrast function are of Fourier type and can be given explicitly, but introducing the needed radial bases for the contrast requires numerically orthogonalizing certain products of Bessel functions, which adds an additional unstable step to the algorithm. On the positive side, this orthogonalization can be performed offline before the data is available and can also be stabilized by truncating the to-be-inverted system based on ideas in \cite{GriSch24}. Due to the achieved angular decoupling and the triangular structure of the subsystems, the proposed method allows an efficient numerical implementation, as well as an explicit recursive reconstruction formula (Theorem~\ref{Theorem:main}) if instability issues are not considered.

The presented numerical experiments demonstrate that our method produces reconstruction of good quality from (noisy) Born far field data, and it can also be applied to full nonlinear far field data to obtain reconstructions ranging in quality from good to reasonable depending on the extent of multiple scattering effects. According to our numerical experiments, the proposed method compares favorably to other algorithms designed for solving the inverse medium Born scattering problem, in particular, producing reconstructions comparable to those by the low-rank method introduced recently
in~\cite{ZhoAudMenZha26}. 

A natural direction for future work is to derive explicit representations for the radial basis functions; this would also be a first step toward extending the stability results of~\cite{GarHyv24} from EIT to inverse medium scattering. Another promising avenue is to generalize our reconstruction method to three dimensions, building on the ideas in~\cite{Garde25}.



\section*{Acknowledgments}
This work was supported by the Research Council of Finland (Flagship of Advanced Mathematics for Sensing Imaging and Modelling grant 359181).

{\small
  \bibliographystyle{abbrvurl}
  \bibliography{inverse_Born_scattering}
}

\end{document}